\newtheorem{thm}{Theorem}
\newtheorem{lemma}[thm]{Lemma}
\newtheorem{prop}[thm]{Proposition}
\newtheorem{cor}[thm]{Corollary}
\newtheorem{defi}[thm]{Definition}
{\theoremstyle{definition}
\newtheorem*{exa}{Example}
\newtheorem*{rem}{Remark}}
\newcommand{\C}{\mathbb{C}}
\newcommand{\CC}{\mathcal{C}}
\newcommand{\F}{\mathbb{F}}
\newcommand{\R}{\mathbb{R}}
\newcommand{\Z}{\mathbb{Z}}
\newcommand{\D}{\mathcal{D}}
\newcommand{\x}{\underline{x}}
\renewcommand{\F}{\mathcal F}
\begin{document}

\title{On Welschinger invariants of symplectic 4-manifolds}

\author{Erwan Brugall\'e}
\address{Erwan Brugall\'e, École Polytechnique,
Centre Mathématiques Laurent Schwartz, 91 128 Palaiseau Cedex, France}
\email{erwan.brugalle@math.cnrs.fr}

\author{Nicolas Puignau}
\address{Nicolas Puignau, Universidade Federal do Rio de Janeiro, Ilha do Fundão, 21941-909 Rio de Janeiro, Brasil}

\email{puignau@im.ufrj.br}

\subjclass[2010]{Primary 14P05, 14N10; Secondary 14N35, 14P25}
\keywords{Real enumerative geometry, Welschinger invariants,
Gromov-Witten invariants, symplectic sum formula, symplectic field theory}

\begin{abstract}
We prove the vanishing of
 many  Welschinger invariants of real symplectic $4$-manifolds.  
In some particular instances, we also determine their sign  and
show that they are divisible by a large power of $2$.
Those results are a consequence of 
several relations among Welschinger invariants
 obtained by a real version of symplectic sum formula.
In particular, this note contains   proofs of results announced in \cite{BP}.
\end{abstract}
\maketitle

A \textit{real symplectic manifold} $(X,\omega,\tau)$ is a symplectic
manifold $(X,\omega)$ equipped with an antisymplectic involution
$\tau$. The \textit{real part} of $(X,\omega,\tau)$, denoted by $\R X$, is by
definition the fixed point set of $\tau$.
We say that 
 an  almost complex structure $J$ tamed by $\omega$ is
 $\tau$-compatible if $\tau$ is $J$-antiholomorphic, i.e. $J\circ
 d \tau=-d\tau\circ J$.

\medskip
Let $X_\R=(X,\omega,\tau)$ be a real symplectic manifold of dimension
4. Let  $C$ be an immersed real rational 
$J$-holomorphic curve in $X$ for some  $\tau$-compatible almost
complex structure $J$, and denote by $L$ the  connected component 
of $\R X$ containing the 1-dimensional part $\widetilde{\R C}$ of $\R C$.
Fix also 
a $\tau$-invariant class $F$ in $H_2(X\setminus L;\Z/2\Z)$. 
Any half of $C\setminus \widetilde{\R C}$ defines 
a class $\underline C$ in $H_2(X,L;\Z/2\Z)$ whose intersection number
modulo 2 with $F$, denoted by $\underline C\cdot F$, 
 is well defined and
does not depend on 
the chosen half. 
 We further denote by $m(C)$ the number of nodes of  $C$ in $L$ with two
 $\tau$-conjugated branches, and
we
define the $F$-mass of $C$ as
$$m_{L,F}(C)=m(C)+\underline C\cdot F. $$

Choose  a connected component  $L$ of $\R X$, a class $d\in
H_2(X;\Z)$,
and  $r,s\in\Z_{\ge 0}$ such that  
$$c_1(X)\cdot d - 1= r+2s.$$
Choose a configuration $\x$ made of $r$ points in $L$ and  $s$ 
 pairs of $\tau$-conjugated 
points in $X\setminus \R X$.
Given a   $\tau$-compatible almost complex structure $J$,
we denote by $\mathcal C(d,\x,J)$
 the set of real rational
$J$-holomorphic curves  in $X$ realizing the class $d$, passing through $\x$,
 and such that $L$ contains  $\widetilde{\R C}$.
For a generic choice of $J$, the set  $\mathcal C(d,\x,J)$ is finite, and
the integer
$$W_{X_\R,L,F}(d,s)= \sum_{C\in\mathcal C(d,\x,J)}(-1)^{m_{L,F}(C)} $$
 depends neither on $\x$, $J$, nor on the deformation class of
$X_\R$ (see \cite{Wel1,IKS11})\footnote{Welschinger originally considered in \cite{Wel1} only 
the case when $F=[ \R X\setminus L]$. In this case $m_{L,F}(C)$ is
the number of solitary nodes of  $\R C$.
Later, Itenberg, Kharlamov, and Shustin observed in \cite{IKS11} that
Welschinger's proof extends literally to arbitrary $\tau$-invariant
classes
 in $H_2(X\setminus L;\Z/2\Z)$. See also \cite{Geor13} for a related
 discussion. 
\\ Note that our convention differ slightly from \cite{IKS11}, 
where  the sign of a curve in $\mathcal
C(d,\x,J)$  depends on the parity of
$ m(C)+\underline C\cdot (F+ [\R X\setminus L])$ instead 
of 
$ m(C)+\underline C\cdot (F)$.}. 
We call these numbers the \emph{Welschinger invariants of $X_\R$}.
When $F=[\R X\setminus L]$, we simply denote
$W_{X_\R,L}(d,s)$ instead of $W_{X_\R,L,[\R X\setminus L]}(d,s)$.
Note that Welschinger invariants are non-trivial to compute
 only in the case of  rational manifolds.

\medskip
A \emph{real Lagrangian sphere} of $X_\R$
is a Lagrangian sphere globally invariant under $\tau$.
Two disjoint surfaces $S$ and $S'$ in $X$ are said to be \emph{connected by
a chain of real Lagrangian spheres} if there exists real Lagrangian spheres
$S_1,\ldots,S_k$ in $X$ such that 
 $S_i\cap S_{j}=\emptyset$
if $|i-j|\ge 2$,
and $S_i$ and $S_{i+1}$ intersect
transversely in a single point, as well as $S$ and $S_0$, and $S'$ and $S_k$.

The next two theorems are the main results of this note.

\begin{thm}\label{thm:chain spheres}
Let $X_\R$ be a real symplectic $4$-manifold, and 
suppose that  $F$ has a $\tau$-invariant representative  connected to $L$ by a
chain of real Lagrangian spheres.
\begin{enumerate}
\item If $r\ge 2$, then
 $$W_{X_\R,L,F}(d,s)= 0.$$
\item If $r=1$ and $c_1(X)\cdot d\ge 2$, then 
$$2^{\frac{c_1(X)\cdot d  -4}{2}}\ |\ W_{X_\R,L,F}(d,s).$$
If in addition $F=[\R X\setminus L]$, then
$$ (-1)^{\frac{ d^2  -c_1(X)\cdot d    +2}{2}}W_{X_\R,L}(d,s)\ge
  0.$$
\end{enumerate}
\end{thm}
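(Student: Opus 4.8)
The plan is to prove all three assertions simultaneously by degenerating $X_\R$ through a \emph{real symplectic sum} associated to the chain of real Lagrangian spheres, and then reading off vanishing, divisibility and sign from the combinatorics of the limiting curves. Fix a $\tau$-invariant representative $\Sigma$ of $F$ and real Lagrangian $2$-spheres $S_1,\dots,S_k$ connecting $\Sigma$ to $L$ as in the hypothesis. Each $S_i$ admits a $\tau$-invariant Weinstein neighborhood symplectomorphic to a disc cotangent bundle $T^*S^2$, whose boundary is the unit cotangent bundle, diffeomorphic to $\RP^3$ and carrying a natural real contact structure. I would first choose the generic $\tau$-compatible $J$ and the configuration $\x$ compatibly with the chain, and then stretch the neck simultaneously along the $\RP^3$'s (the symplectic field theory realization of the real symplectic sum). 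By a real version of SFT compactness, every curve in $\mathcal C(d,\x,J)$ degenerates in the limit to a $\tau$-invariant holomorphic building, whose levels are distributed between the neighborhoods $T^*S^2$ of the $S_i$ and the complement $X\setminus\bigcup_i S_i$, matched along Reeb orbits of $\RP^3$.

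The heart of the argument is then to enumerate the buildings that can occur for generic data and to compute their contribution to $\sum(-1)^{m_{L,F}(C)}$. I expect that a dimension count, together with the Lagrangian condition, severely constrains the levels lying inside the neighborhoods $T^*S^2$: they should reduce to low-energy pieces, so that a building is essentially determined by its main component in $X\setminus\bigcup_i S_i$ together with how the point constraints of $\x$ and the asymptotic Reeb orbits are routed across the necks. Carrying this out for a single sphere ($k=1$) should produce a relation expressing $W_{X_\R,L,F}(d,s)$ as a signed sum over such distributions; iterating along the chain (induction on $k$) then transports the class $F$ across the spheres, until the intersection number $\underline C\cdot F$ is evaluated directly against $L$ and becomes comparable with $\underline C\cdot[\R X\setminus L]$. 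This reduction is what makes the chain hypothesis the right one.

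From this local analysis I would extract the three statements as follows. For assertion (1), when $r\ge 2$ I would exhibit a fixed-point-free involution on the set of limiting buildings — swapping the two real branches that can be anchored at two distinct real constraints, or equivalently interchanging the two real sheets over a neck — and check that it reverses the Welschinger sign $(-1)^{m_{L,F}(C)}$, forcing $W_{X_\R,L,F}(d,s)=0$. For assertion (2), when $r=1$ the single real point anchors the real locus and obstructs that involution; instead each of the $s$ conjugate pairs can be routed through the degeneration in two symmetric ways contributing an overall factor $2$, and bookkeeping (with one pair spent, together with the real point, to anchor the part of the curve near the spheres) yields the divisor $2^{\,s-1}=2^{\frac{c_1(X)\cdot d-4}{2}}$. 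For the sign, note that $\frac{d^2-c_1(X)\cdot d+2}{2}$ is exactly the number $\delta$ of nodes of a rational curve of class $d$; when $F=[\R X\setminus L]$ I would show that every surviving building contributes with the common sign $(-1)^{\delta}$, whence $(-1)^{\delta}\,W_{X_\R,L}(d,s)\ge 0$.

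The main obstacle is making the real symplectic sum formula rigorous \emph{with the correct signs}. One must set up coherent orientations for the real punctured moduli spaces, establish transversality within the class of $\tau$-compatible almost complex structures, control multiple covers and the Reeb dynamics on $\RP^3$, and verify that the Welschinger sign $(-1)^{m_{L,F}(C)}$ is compatible with gluing so that it factors through the degeneration as claimed. In particular, proving that the contribution of each neck carries \emph{exactly} the factor $2$ without sign cancellations (which would destroy the divisibility), and that for $F=[\R X\setminus L]$ these local contributions are sign-definite of the predicted parity, is precisely where the real structure and the Lagrangian condition must be used most delicately.
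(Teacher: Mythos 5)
Your global strategy---degenerating along the spheres (neck stretching, equivalently a real symplectic sum, as the paper itself notes) and transporting $F$ across the chain one sphere at a time until it can be compared against $L$---is exactly the paper's reduction: Corollary \ref{cor:W equal} gives $W_{X_\R,L,F}=W_{X_\R,L,F+V}$ for each sphere in the chain, and Theorem \ref{thm:W vanish} treats the last sphere, the one meeting $L$ in a single point. But your proposal stops where the real work begins, and the mechanisms you posit are not the ones that make the argument close. For assertion (1), the sign-reversing ``involution'' is not something one can exhibit a priori on the set of limiting buildings: in the paper, every contributing limit curve is forced (by positivity of intersection and the disconnectedness of $\overline L\setminus \R E$) to have its component through the $X_0$-side point in $\CC^{e_2,0}(l_1+l_2,\{p_0\}\cup\x_E,J_0)$, i.e.\ \emph{tangent} to $E$, and then the explicit local model $zw=\mu(t)$ with $\mu(t)^2=t/(ab)$, $a,b\in\R^*$ (Proposition \ref{prop:real degeneration}) shows that for one sign of $t$ the limit has \emph{no} real deformation, while for the other it has exactly two, one with a real node with real branches and one with a solitary node---hence opposite Welschinger signs. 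The cancelling involution thus acts on the two elements of each fiber of the degeneration map over a tangent limit, and its sign-reversing property is precisely the content of this square-root analysis; asserting ``interchanging the two real sheets over a neck reverses the sign'' is the statement to be proved, not an available mechanism.

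Your accounting for assertion (2) also differs from what actually works. In the paper, when $r=1$ the unique real point is placed on the $X_0$-side, which forces $C_1$ to split into two $\tau$-conjugate components; the factor $2^{\frac{c_1(X)\cdot d-4}{2}}=2^{s-1}$ counts the distributions of the $s$ conjugate pairs of point constraints among these two components (there are $2^s$ choices, identified in pairs by exchanging the components), so the contributing curves come in packets of size $2^{s-1}$. No conjugate pair is ``spent'' to anchor anything, and there is no per-neck factor of $2$ to control against sign cancellation. For the sign when $F=[\R X\setminus L]$, the needed structural input is that the unique real component $\overline C'$ is embedded, so all real nodes of the deformed curve are solitary, and the adjunction formula pins their parity to $\frac{d^2-c_1(X)\cdot d+2}{2}$; your claim of a common sign is the right statement but has no proof without this. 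Finally, stretching along all spheres simultaneously, as you propose, produces far more degenerate buildings than the analysis can handle; the paper's one-sphere-at-a-time symplectic sum in the sense of Ionel--Parker, with positivity of intersections and elementary dimension counts (Propositions \ref{prop:finite} and \ref{prop:degeneration}) replacing SFT transversality and coherent orientations, is what keeps the limit sets finite and the signs computable. As written, every load-bearing step of your proposal is deferred to the final ``main obstacle'' paragraph, so the argument has a genuine gap rather than an alternative route.
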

Theorem \ref{thm:chain spheres}
 is an immediate consequence of 
Theorem \ref{thm:W vanish} and Corollary \ref{cor:W equal} 
respectively given in Sections \ref{sec:W vanish} and  \ref{sec:relations}.
The invariant $W_{X_\R,L,0}(d,s)$ does not
seem to satisfy a vanishing statement analogous to 
Theorem \ref{thm:chain spheres}$(1)$
(see {\cite{IKS11,IKS13,Bru14}}), implying that the set  
$\mathcal C(d,\x,J)$ is usually
non-empty.
Theorem \ref{thm:chain spheres}$(2)$ 
 partially generalizes {\cite[Theorems 1.1,
    2.1, 2.2, and 2.3]{Wel4}} and 
 {\cite[Proposition 8.2]{Bru14}}.

\medskip
Theorem \ref{thm:chain spheres} can be specialized to real algebraic
rational surfaces, whose classification is well known (see \cite{Sil89,Kol97} for example).
A real algebraic rational surface is always implicitly assumed to be equipped
with some Kähler form.

Let $\mathcal G$ be the subgroup of the $\tau$-invariant classes in
$H_2(X\setminus L;\Z/2\Z)$ generated by the kernel of the natural map
$H_2(X\setminus L;\Z/2\Z)\to H_2(X;\Z/2\Z)$, and by the classes
realized by 
smooth
real
symplectic curves with either positive genus or  self-intersection at least $-1$.
We show in Propositions \ref{prop:curve} and \ref{prop:vanish} that 
$W_{X_\R,L,F}$ and $W_{X_\R,L,F'}$
are equal in absolute value if $F-F'\in\mathcal G$. 
We denote by $\mathcal H(X_\R,L)$ the
group
 of $\tau$-invariant classes in
$H_2(X\setminus L;\Z/2\Z)$ quotiented by $\mathcal G$.
All groups  $\mathcal H(X_\R,L)$ are computed in the case of 
real algebraic rational surfaces in Section \ref{sec:real rational}.
 In particular, we prove in 
Proposition \ref{prop:isomorphism H} that
they only depend on a minimal model of $X_\R$ and on the choice of $L$.

\begin{thm}\label{thm:real algebraic}
Let $X_\R$ be a real symplectic $4$-manifold  equal,   up to
deformation and equivariant symplectomorphism, to a
real algebraic rational surface, and suppose that $F$ is non-zero
in $\mathcal H(X_\R,L)$.
Then the conclusions of Theorem \ref{thm:chain spheres} hold in the
following cases:
\begin{itemize}
\item $X_\R$ is obtained from a minimal model by blowing up pairs of
  complex conjugated points and real points on at most two connected
  components of $\R X$, one of them being $L$;
\item $X_\R$ is a Del Pezzo surface;
\item $F=[\R X\setminus L]$. 
\end{itemize}
\end{thm}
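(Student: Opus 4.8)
The plan is to derive Theorem~\ref{thm:real algebraic} from Theorem~\ref{thm:chain spheres}. The latter requires that $F$ admit a $\tau$-invariant representative connected to $L$ by a chain of real Lagrangian spheres, whereas we are only given that $F$ is non-zero in $\mathcal H(X_\R,L)$. The whole reduction thus rests on one geometric input: every non-zero class of $\mathcal H(X_\R,L)$ has such a representative. Once this is achieved by a representative $F'$ with $F-F'\in\mathcal G$, Propositions~\ref{prop:curve} and~\ref{prop:vanish} give $|W_{X_\R,L,F}|=|W_{X_\R,L,F'}|$, so that the vanishing of Theorem~\ref{thm:chain spheres}(1) and the $2$-divisibility of~(2) pass from $F'$ to $F$. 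The sign statement in~(2) is asserted only for $F=[\R X\setminus L]$, and there it must be obtained by connecting that very class to $L$, not merely a class congruent to it modulo $\mathcal G$; this is exactly why the third bullet is singled out.

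To locate the spheres, I would first use Proposition~\ref{prop:isomorphism H} to reduce the identification of $\mathcal H(X_\R,L)$ to a minimal model of $X_\R$, and then combine the classification of minimal real rational surfaces (\cite{Sil89,Kol97}) with the explicit computation of $\mathcal H(X_\R,L)$ carried out in Section~\ref{sec:real rational}. The elementary building block of every chain is the Lagrangian vanishing cycle produced by letting a pair of $\tau$-conjugated points collide on $L$: blowing up such a pair $p,\bar p$ yields a real Lagrangian sphere of class $E_p-E_{\bar p}$, of self-intersection $-2$ and $\tau$-invariant modulo $2$. Combining these with the $\tau$-invariant $(-2)$-configurations already carried by the minimal model, one assembles chains realizing the generators of $\mathcal H(X_\R,L)$ and links them to $L$.

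The argument then splits along the three bullets. For a surface obtained from a minimal model by blowing up conjugate pairs and real points on at most two components, one of them $L$, the restriction to two components ensures that the exceptional and vanishing-cycle spheres actually propagate to $L$, and a direct inspection of the generators of $\mathcal H(X_\R,L)$ produces the chains. For a Del Pezzo surface, I would run through the finite list of deformation types and check, case by case, that the $(-2)$-classes spanning $\mathcal H(X_\R,L)$ are represented by real Lagrangian spheres forming chains that reach $L$. For $F=[\R X\setminus L]$, the natural representative is the union of the remaining components of $\R X$, each a $\tau$-invariant Lagrangian surface, and the task is to connect them to $L$ through real Lagrangian spheres; here the non-vanishing of $[\R X\setminus L]$ in $\mathcal H(X_\R,L)$ is exactly what forces such a connection to exist, which one reads off from the classification.

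The main obstacle is the Del Pezzo case. In contrast with the two-component blow-ups, where connectivity to $L$ is essentially built into the construction, a general real Del Pezzo surface carries many $(-2)$-classes, and one must verify that each generator of $\mathcal H(X_\R,L)$ is realized by an honest real Lagrangian sphere---and not merely by some class of self-intersection $-2$---and that these spheres can be threaded into a single chain terminating on $L$. Controlling simultaneously the realizability by genuine Lagrangian spheres and the incidence pattern of the chain is where the real difficulty lies.
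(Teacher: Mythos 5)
Your reduction scheme coincides with the paper's: apply Theorem \ref{thm:chain spheres} to a representative chained to $L$, use Propositions \ref{prop:curve} and \ref{prop:vanish} to move within a $\mathcal G$-coset (which transports vanishing and $2$-divisibility but not the sign, whence the special role of $F=[\R X\setminus L]$), and use Proposition \ref{prop:isomorphism H} together with the classification of minimal real rational surfaces. But there is a genuine gap at exactly the step you flag as ``where the real difficulty lies'': exhibiting the chains is the whole content of the theorem, and your proposal asserts it rather than proves it. In the paper this is not an unresolved case-by-case problem over Del Pezzo deformation types: Lemmas \ref{lem:H vanish}, \ref{lem:homology conic}, \ref{lem:homology quartic} and \ref{lem:homology cone} produce, once and for all at the minimal level, bases of $\mathcal H(X_\R,L)$ consisting of classes $[S_i]$ of $\tau$-invariant vanishing Lagrangian spheres whose existence and incidence pattern are read off from explicit geometry --- the affine equation $y^2+z^2=\prod(x-a_i)$ for conic bundles (a cyclic chain, Figure \ref{fig:conic}), and the rigid isotopy classifications of real plane quartics and of real cubic sections of the quadratic cone for the minimal Del Pezzo surfaces of degrees $2$ and $1$ (a sphere joining \emph{each pair} of real components, Figures \ref{fig:quartic} and \ref{fig:cone}). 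Proposition \ref{prop:isomorphism H}, with blow-up centers chosen off these spheres, then transports both the group and the chains to every blow-up; the three bullets merely record when the needed links survive (a real center on an intermediate component destroys it as a sphere-link, whence the two-component restriction; for $F=[\R X\setminus L]$ the representative already contains the component adjacent to $L$, so a single vanishing sphere meeting $L$ in one point suffices to feed Theorem \ref{thm:W vanish}). So the ``main obstacle'' you identify is precisely the classification input of Sections \ref{sec:H vanish}--\ref{sec:cone}, which your argument would have to import verbatim to be complete.

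Your proposed elementary building block is moreover a wrong turn. A real Lagrangian sphere in the class $E_p-E_{\bar p}$ obtained from a blown-up conjugate pair represents $[E_p]+[E_{\bar p}]$ modulo $2$, and this class is trivial in $\mathcal H(\widetilde X_\R,\widetilde L)$ --- that blow-ups at conjugated pairs do not enlarge $\mathcal H$ is exactly the ``clearly true'' half of Proposition \ref{prop:isomorphism H} --- so such spheres can never realize a nonzero $F$; all generators come from the minimal model. Nor can they serve as links in a chain: since $p,\bar p\notin\R X$, the class $[E_p]+[E_{\bar p}]$ has zero mod-$2$ intersection with $[L]$ and with every class pulled back from the minimal model, so such a sphere meets $L$, and any minimal-model sphere, in an even number of transverse points --- never in the single transverse point the definition of a chain requires. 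The links actually used in the paper are spheres of the type $S_{2i}\subset\R\times(i\R)^2$, which meet $\R X$ in exactly two points, one on each neighbouring component.
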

\begin{rem}
In a burst of enthusiasm, we forgot in {\cite[Proposition 3.3]{BP}}
the assumption that $X_\R$ has
to be symplectomorphic/deformation equivalent to  
a   
real algebraic rational surface.
\end{rem}

\medskip
Theorem \ref{thm:real algebraic} follows from
the classification of real
algebraic rational surfaces and 
Theorem \ref{thm:chain spheres}, which in its turn
 is a
 direct consequence of Theorem \ref{thm:W vanish} 
and Corollary \ref{cor:W equal} below.
Our strategy to prove these latters is to degenerate $X_\R$ into a
reducible real symplectic manifold $X_{\sharp,\R}$, and to relate enumeration of
curves in $X_{\sharp,\R}$ and in $X_\R$. This degeneration can be thought as a
 degeneration of $X_\R$ to a real nodal 
symplectic manifold, and can be described by the contraction of a 
 real Lagrangian sphere $S_V$ by
stretching 
the neck of a $\tau$-compatible  almost complex structure  in a
neighborhood of $S_V$ (see \cite{EGH,Wel4}). In this note we use an
equivalent description in terms of symplectic sum (\cite{Gom95,IP}), see
section \ref{sec:degeneration} for more details. 

In particular,  Corollary \ref{cor:W equal} follows from 
Theorem  \ref{mainthm}, which can be seen as a real version
of the Abramovich-Bertram-Vakil formula {\cite[Theorem 3.1.1]{AB}},  
{\cite[Theorem 4.5]{Vak2}}.
Another but related treatment of  contraction of Lagrangian spheres contained
in $\R X$ has  previously been proposed by Welschinger in
\cite{Wel4}.

\medskip
The paper is organized as follows. We state Theorems \ref{thm:W
  vanish} and \ref{mainthm} in Section \ref{sec:formula}, and give
  their proof in  Section \ref{sec:proofs} using a
  real version of the symplectic sum 
formula.
We
end this paper by explicit computations in the case of real algebraic
rational surfaces in Section \ref{sec:real rational}.

\bigskip
\textbf{Acknowledgment: } We are grateful to
 Simone Diverio, Penka Georgieva, Umberto Hryniewicz, Ilia Itenberg,
Viatcheslav Kharlamov, Leonardo Macarini, Frédéric Mangolte, Brett Parker, Christian
Peskine, Patrick Popescu,  Jean-Yves Welschinger, and
Aleksey Zinger for many useful conversations.
We are also indebted to the anonymous referee for many valuable
comments on the first version of this paper.

Both authors were supported by the Brazilian-French Network in Mathematics.  E.B. was also partially supported by the ANR-09-BLAN-0039-01. 
Part of this work was accomplished at the Centre Interfacultaire
Bernoulli (CIB) in Lausanne, Switzerland, during the semester program
 ``Tropical geometry in its complex and symplectic aspects''.

\section{Auxiliary results}\label{sec:formula}
\subsection{Preliminaries}
In the whole text, we denote by $X_0=\C P^1\times \C P^1$, by $\omega_{FS}$ the
  Fubini-Study form on $\C P^n$, and by $l_1$ and $l_2$ respectively
  the homology classes $[\C P^1\times \{0\}]$ and $[ \{0\}\times\C
    P^1]$ in $H_2( X_0;\Z)$. Recall that $H_2( X_0;\Z)$ is the
  free abelian group generated by $l_1$ and $l_2$.
Up
to conjugation by an automorphism, there exist
four different real structures on 
$(\C P^1\times \C P^1, \omega_{FS}\oplus \omega_{FS})$, 
and  the class $l_1+l_2$ is invariant for
exactly three of them, see for example \cite{Sil89,Kol97}. 
These latter are given in coordinate by:
\begin{itemize}
\item $\tau_{hy}(z,w)=(\overline z,\overline w)$, $\R X_{hy}=S^1\times
  S^1$;
\item  $\tau_{el}(z,w)=(\overline w,\overline z)$, $\R X_{el}=S^2$;
\item  $\tau_{em}(z,w)=(-\frac{1}{\overline z},-\frac{1}{\overline w})$,
  $\R X_{em}=\emptyset $.
\end{itemize}

Note that $\tau_{hy}$ and  $\tau_{em}$ act trivially on $H_2(X_0;\Z/2\Z)$,
while $\tau_{el}$ exchanges the classes $l_1$ and $l_2$. Note also,
with the convention that $\chi(\emptyset)=0$, that
$$ \chi(\R X_{hy})=  \chi(\R X_{em})=0, \quad \textrm{and}\quad 
\chi(\R X_{el})=2 .$$

\begin{lemma}
Let  $E$ be a 
smooth symplectic curve in $(X_0,\omega_{FS}\oplus\omega_{FS}) $ realizing the class $l_1+l_2$ in $H_2(X_0;\Z)$.
The group $H_2( X_0\setminus E;\Z/2\Z)$ is isomorphic to $\Z/2\Z$, and
is generated by any representative
 disjoint from $E$ of the
class $l_1+l_2$ in $H_2( X_0;\Z/2\Z)$.
\end{lemma}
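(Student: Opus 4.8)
The plan is to compute $H_2(X_0\setminus E;\Z/2\Z)$ through the long exact sequence of the pair $(X_0,X_0\setminus E)$, after first pinning down the topology of $E$ and of a tubular neighborhood. First I would record that $E$ is an embedded sphere: since $E$ is a smooth symplectic curve in class $l_1+l_2$, the (symplectic) adjunction formula gives
$$g(E)=1+\frac{E\cdot E-c_1(X_0)\cdot E}{2}=1+\frac{2-4}{2}=0,$$
so $E\cong S^2$ with $E\cdot E=2$. A closed tubular neighborhood $N$ of $E$ is then the oriented rank-$2$ disk bundle over $S^2$ of Euler number $2$, and by excision together with the mod-$2$ Thom isomorphism for the normal bundle one gets
$$H_k(X_0,X_0\setminus E;\Z/2\Z)\cong H_k(N,\partial N;\Z/2\Z)\cong H_{k-2}(E;\Z/2\Z),$$
which equals $\Z/2\Z$ for $k=2,4$ and vanishes otherwise. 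It is essential here that $g(E)=0$, since otherwise $H_3(X_0,X_0\setminus E;\Z/2\Z)\cong H_1(E;\Z/2\Z)$ would be nonzero and change the shape of the sequence.

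Next I would plug this into the long exact sequence of the pair with $\Z/2\Z$ coefficients, using $H_*(X_0;\Z/2\Z)=H_*(S^2\times S^2;\Z/2\Z)$, that is $\Z/2\Z,\,0,\,(\Z/2\Z)^2,\,0,\,\Z/2\Z$ in degrees $0,\dots,4$. The relevant portion then reads
$$0\to H_2(X_0\setminus E;\Z/2\Z)\to H_2(X_0;\Z/2\Z)\xrightarrow{\ \gamma\ }H_2(X_0,X_0\setminus E;\Z/2\Z),$$
the preceding term $H_3(X_0,X_0\setminus E;\Z/2\Z)$ vanishing by the previous step, which forces the first map (induced by inclusion) to be injective. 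The crux is to identify $\gamma$: under the Thom isomorphism $H_2(X_0,X_0\setminus E;\Z/2\Z)\cong H_0(E;\Z/2\Z)\cong\Z/2\Z$, it is the mod-$2$ intersection pairing against $E$, sending a class represented by a surface transverse to $E$ to its number of intersection points with $E$. Granting this, from $l_1\cdot E=l_2\cdot E=1$ and $(l_1+l_2)\cdot E=2\equiv 0$ I conclude that $\gamma$ is surjective with kernel exactly the line spanned by $l_1+l_2$.

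Exactness then identifies $H_2(X_0\setminus E;\Z/2\Z)$ with $\ker\gamma=\langle l_1+l_2\rangle\cong\Z/2\Z$, and simultaneously shows that the inclusion-induced map $H_2(X_0\setminus E;\Z/2\Z)\to H_2(X_0;\Z/2\Z)$ is injective onto the class $l_1+l_2$. In particular the nonzero element of $H_2(X_0\setminus E;\Z/2\Z)$ is carried by a cycle disjoint from $E$ whose class in $X_0$ is $l_1+l_2$, so a disjoint representative exists; conversely any surface disjoint from $E$ and homologous to $l_1+l_2$ maps to $l_1+l_2\neq 0$ and is therefore the generator. This gives both assertions. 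The step I expect to be the main obstacle is the clean identification of $\gamma$ with the intersection product, including the orientation and coefficient bookkeeping in the Thom isomorphism, so that $\ker\gamma=\langle l_1+l_2\rangle$ is genuinely justified; the adjunction computation forcing $g(E)=0$ is the other point one cannot skip, since the exact sequence used above relies on $H_3(X_0,X_0\setminus E;\Z/2\Z)=0$.
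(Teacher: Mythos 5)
Your proof is correct, but it takes a genuinely different route from the paper's printed argument. You share the opening step---adjunction forces $g(E)=1+\frac{2-4}{2}=0$, so $E$ is an embedded sphere of square $2$---but from there the paper argues via pseudoholomorphic geometry: it chooses a tamed $J$ making $E$ holomorphic, uses that the Gromov--Witten invariant of the class $l_1$ equals $1$ together with positivity of intersections ($l_1^2=0$, $[E]\cdot l_1=1$) to foliate $X_0$ by embedded $J$-spheres in class $l_1$, and thereby produces an $S^2$-fibration $X_0\to E$ restricting to an $\R^2$-fibration $X_0\setminus E\to E$; hence $X_0\setminus E$ is homotopy equivalent to $S^2$, which gives $H_2(X_0\setminus E;\Z/2\Z)\simeq \Z/2\Z$, and a representative of $l_1+l_2$ disjoint from $E$ (which exists since $[E]^2=2$) is visibly nonzero there. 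Your argument---excision plus the mod-$2$ Thom isomorphism feeding the long exact sequence of the pair $(X_0,X_0\setminus E)$, with the map $\gamma$ identified as mod-$2$ intersection with $E$---is purely topological and is essentially the alternative the paper itself alludes to when it says the lemma ``can be proved exactly as Lemma~\ref{lem:MV}'' (the Mayer--Vietoris computation there is the same homological bookkeeping in a tubular neighborhood of the surface). As for what each buys: your version is more elementary, needs no genericity or taming choices, and yields for free that $H_2(X_0\setminus E;\Z/2\Z)\to H_2(X_0;\Z/2\Z)$ is injective with image $\langle l_1+l_2\rangle$, which is precisely what makes both halves of the statement (existence of a disjoint representative, and any such representative generating) immediate; the paper's fibration argument gives a stronger output, namely the full homotopy type of $X_0\setminus E$, and supplies the geometric picture ($T^*S^2$ as the complement of a hyperplane section of the quadric) used later in the paper. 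The two points you flag as delicate are indeed the right ones but are unproblematic: with $\Z/2\Z$ coefficients the Thom isomorphism and the identification of $\gamma$ with the intersection pairing are standard and orientation-free, and the vanishing $H_3(X_0,X_0\setminus E;\Z/2\Z)\cong H_1(E;\Z/2\Z)=0$ is exactly where the adjunction computation is consumed.
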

\begin{proof}
The first Chern class of  $(X_0,\omega_{FS}\oplus\omega_{FS})$ is dual
to $l_1+l_2$. Hence
it follows from the adjunction formula \cite[Chapter 2]{McS} that $E$
  is an embedded sphere. 
The lemma can be proved exactly as Lemma \ref{lem:MV}, nevertheless we
provide an alternate proof. 

Let $J$ be an almost complex structure on $X_0$ tamed by
$\omega_{FS}\oplus\omega_{FS}$ such that $E$ is $J$-holomorphic.
Since both classes $l_1$ and $l_2$ have the same symplectic area, a
class $al_1+bl_2$ has positive symplectic area if and only if
$a+b>0$. As a consequence, any 
  $J$-holomorphic curve realizing the
class $l_1$ is an embedded sphere. The Gromov-Witten invariant of
$(X_0,\omega_{FS}\oplus\omega_{FS}) $ for the class
$l_1$ is equal to 1, and $l_1^2=0$, so  there exists a unique 
$J$-holomorphic sphere realizing the class $l_1$  and 
passing through any given point of $X_0$. Recall that any intersection
of two distinct
$J$-holomorphic curves is positive.
Since
$[E]\cdot l_1=1$, we deduce a $S^2$-fibration $X_0\to E$ whose fiber
over a point $p\in E$ is the  $J$-holomorphic sphere realizing the
class $l_1$  and  
passing through $p$. In its turn, this induces  a $\R^2$-fibration 
$X_0\setminus E\to E$, and so $X_0\setminus E$ has the same homotopy
type than $E$. This proves that $H_2(X_0\setminus E;\Z/2\Z)\simeq \Z/2\Z$.

Since $[E]^2=2$ in $X_0$, there exists a 
 representative $F$
of the
class $l_1+l_2$ in $H_2( X_0;\Z/2\Z)$
 disjoint from $E$. The class $[F]$ is obviously non-zero in $H_2(
X_0\setminus E;\Z/2\Z)$, and so generates the group.
\end{proof}

\begin{lemma}\label{lem:real disk}
Suppose
that $E$ is a smooth real symplectic curve in $(X_0,\omega_{FS}\oplus
\omega_{FS},\tau_{el}) $ realizing the class $l_1+l_2$ in
$H_2(X_0;\Z)$, and that $\D$ is an embedded
 $\tau_{el }$-invariant disk with  $\partial
\D\subset E$.
Then the group $H_2( X_0, E;\Z/2\Z)$ is isomorphic to $\Z/2\Z$ and
 generated by $\D$.
\end{lemma}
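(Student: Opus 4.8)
The plan is to first pin down the group abstractly from the long exact sequence of the pair, and then to identify $\D$ as the nontrivial class by a capping argument that feeds on the real structure $\tau_{el}$.

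First I would record that $E$ is an embedded sphere, which is exactly what the preceding lemma observed via the adjunction formula, since $c_1(X_0)$ is dual to $l_1+l_2=[E]$. Hence $H_1(E;\Z/2\Z)=0$ and $H_2(E;\Z/2\Z)\simeq\Z/2\Z$. Feeding this, together with $H_1(X_0;\Z/2\Z)=0$, into the long exact sequence of $(X_0,E)$ with $\Z/2\Z$ coefficients, the connecting homomorphism lands in $H_1(E;\Z/2\Z)=0$, so the map $H_2(X_0;\Z/2\Z)\to H_2(X_0,E;\Z/2\Z)$ is onto, while its kernel is the image of $H_2(E;\Z/2\Z)\to H_2(X_0;\Z/2\Z)$, that is, the line generated by $[E]=l_1+l_2$. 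As $l_1+l_2\ne 0$, this map is injective, and I conclude $H_2(X_0,E;\Z/2\Z)\simeq H_2(X_0;\Z/2\Z)/\langle l_1+l_2\rangle\simeq\Z/2\Z$.

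It then remains to show that $[\D]$ is the nonzero class. The boundary $\partial\D$ is a circle in $E\simeq S^2$, so it separates $E$ into two disks $E_+$ and $E_-$; gluing either of them to $\D$ along $\partial\D$ produces a sphere $\widehat{\D}_\pm=\D\cup E_\pm$ whose class maps to $[\D]$ under $H_2(X_0;\Z/2\Z)\to H_2(X_0,E;\Z/2\Z)$, because $E_\pm\subset E$. The key point is that $\tau_{el}$, which preserves $\D$ and $E$ and hence $\partial\D$, interchanges the two halves, $\tau_{el}(E_+)=E_-$, so that $\widehat{\D}_-=\tau_{el}(\widehat{\D}_+)$. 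On the one hand, since $2[\D]=0$, we have $[\widehat{\D}_+]+[\widehat{\D}_-]=[E_+]+[E_-]=[E]=l_1+l_2$ over $\Z/2\Z$; on the other hand, writing $[\widehat{\D}_+]=al_1+bl_2$ and using that $\tau_{el}$ exchanges $l_1$ and $l_2$, I get $[\widehat{\D}_-]=bl_1+al_2$, whence $(a+b)(l_1+l_2)=l_1+l_2$, i.e. $a+b=1$. Thus $[\widehat{\D}_+]\notin\{0,l_1+l_2\}=\langle l_1+l_2\rangle$, its image $[\D]$ in the quotient is nonzero, and $\D$ generates $H_2(X_0,E;\Z/2\Z)$.

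The step that genuinely uses the hypotheses is the claim $\tau_{el}(E_+)=E_-$: a priori $\tau_{el}$ could preserve each half, and then the same computation would force $[\widehat{\D}_\pm]\in\langle l_1+l_2\rangle$ and $[\D]=0$, so this is where I expect the main difficulty to lie. I would settle it from the geometry of the $\tau_{el}$-invariant disk, namely that $\partial\D$ is the fixed circle $\R E=E\cap\R X_{el}$ of the antiholomorphic involution $\tau_{el}|_E$ on $E\simeq\C P^1$, which reverses the two sides of its real locus, so the complementary disks are honestly $\tau_{el}$-conjugate. As a consistency check and an alternate route, one can invoke Lefschetz duality, under which $H_2(X_0,E;\Z/2\Z)$ pairs perfectly with $H_2(X_0\setminus E;\Z/2\Z)\simeq\Z/2\Z$ (computed in the preceding lemma and generated by a representative $F$ of $l_1+l_2$ disjoint from $E$); since $E_+\subset E$ is disjoint from $F$, the pairing of $[\D]$ with $[F]$ equals $[\widehat{\D}_+]\cdot(l_1+l_2)=a+b=1$, again recovering that $\D$ is the generator.
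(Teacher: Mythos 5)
Your proposal follows the paper's own route almost step for step: the paper likewise uses the long exact sequence of the pair to get $H_2(X_0,E;\Z/2\Z)\simeq H_2(X_0;\Z/2\Z)/\langle l_1+l_2\rangle\simeq \Z/2\Z$, then caps $\D$ with the two halves $D_1,D_2$ of $E\setminus\partial\D$ and uses that $\tau_{el}$ exchanges the halves while exchanging $l_1$ and $l_2$ to get $[D_i\cup\D]=l_i\notin\langle l_1+l_2\rangle$. Your Lefschetz-duality pairing against the generator of $H_2(X_0\setminus E;\Z/2\Z)$ is a pleasant cross-check not in the paper, and your $a+b=1$ computation is a slightly cleaner packaging of the paper's ``both classes are non-null'' argument.

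The gap is exactly where you located it, in the justification of $\tau_{el}(E_+)=E_-$, and your proposed justification is wrong: you claim $\partial\D$ is the fixed circle $\R E=E\cap\R X_{el}$, but this fails in the very situations where the lemma is applied. Under hypothesis $(H_2)$ (Figure \ref{fig:H2}), case (b) has $\partial\D$ disjoint from a nonempty $\R E$, and case (c) has $\R E=\emptyset$, so there $\partial\D$ is a $\tau_{el}$-invariant circle that is not pointwise fixed (indeed cannot be, as $\tau_{el}|_E$ is then fixed-point free); also, $\tau_{el}|_E$ is antiholomorphic only after choosing a compatible $J$ making $E$ holomorphic --- in general it is just a smooth involution of a sphere. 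The exchange claim nevertheless holds in all the relevant configurations, by a fixed-point argument: (i) if $\partial\D\subset\R E$ (the case $\D=L_0$, forcing $\partial\D=\R E$), a nontrivial smooth involution fixing the circle $\partial\D$ acts by $-1$ on its normal bundle in $E$, hence swaps the two local sides and therefore the two halves; (ii) if $\partial\D\cap\R E=\emptyset$, then $\tau_{el}$ is free on $\partial\D$, and the circle $\R E$, if nonempty, is connected and lies in one open half, so if $\tau_{el}$ preserved the halves the other closed half would be a $\tau_{el}$-invariant disk without fixed points, contradicting Brouwer. Note that some restriction of this kind is genuinely needed: for the standard conjugation on $E\simeq\C P^1$ the unit circle is invariant, meets the real locus in two points, and each complementary half is preserved; an invariant disk bounding such a circle would represent $0$ in $H_2(X_0,E;\Z/2\Z)$, exactly the failure mode your own sanity check identifies. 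This configuration is excluded by the paper's standing assumption that $\F$ is either disjoint from $\R E$ or locally contained in $\R X_\sharp$ near $\R E$. To be fair, the paper's proof is equally terse here (it simply asserts that $\tau_{el}$ exchanges $D_1$ and $D_2$ because $E$ is real), but it does not commit to the false identification $\partial\D=\R E$; with the repair above your argument is complete and matches the paper's.
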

\begin{proof}
Recall that $E$ is an embedded sphere.
The  long exact sequence of pairs gives the exact sequence
$$H_2(E;\Z/2\Z)\xrightarrow[]{i} H_2(X_0;\Z/2\Z) \xrightarrow[]{j} 
H_2(X_0,E;\Z/2\Z) \xrightarrow[]{} 0. $$
The map $i$ is clearly injective, so $H_2(X_0,E;\Z/2\Z)$ is
isomorphic to $\Z/2\Z$ and generated by $j(l_1)=j(l_2)$.

Denote by $D_1$ and $D_2$ the two halves of $E\setminus \partial \D$. 
Since $E$ is a real symplectic curve, the involution  $\tau_{el }$
exchanges $D_1$ and $D_2$.
The surface $D_i\cup \D$  realizes a class in 
$H_2(X_0;\Z/2\Z)$, and we have
$$l_1+l_2=[D_1\cup D_2]= [D_1\cup \D] + [D_2\cup \D] \quad
\mbox{in}\ H_2(X_0;\Z/2\Z). $$ 
Since $\tau_{el}$ exchanges the classes $[D_1\cup \D]$ and $[D_2\cup
  \D] $, both of them are non-null, i.e. 
$[D_1\cup \D]=l_i$ and $[D_2\cup
  \D] =l_{3-i}$. Hence by the long exact sequence of pairs, the class
realized by $D_1\cup \D$ in $H_2( X_0, E;\Z/2\Z)$, which equals the
class realized by $\D$,  generates the
group.
\end{proof}

\begin{exa}
In the case when $X_0\setminus E$ is the affine 
quadric with equation $x^2+y^2+z^2=1$ in $\C^3$, the sphere
$X_0\cap \R^3$ is an example of
  generator of $H_2( X_0\setminus E;\Z/2\Z)$, and the disk 
$X_0\cap (i\R\times i\R\times \R_{>0})$ is an example of 
generator of $H_2( X_0, E;\Z/2\Z)$, see Figure \ref{fig:H2}c.
\end{exa}

\subsection{Vanishing Lagrangian spheres}\label{sec:degeneration}
Let $X_\R=(X,\omega,\tau)$ be a real symplectic manifold of dimension
4.
A class $V$  in $H_2(X;\Z/2\Z)$ is called a \emph{real vanishing cycle} if
it can be represented by a real Lagrangian sphere $S_V$.
By
stretching 
the neck of a $\tau$-compatible  almost complex structure  in a
neighborhood of $S_V$, one  decomposes $X$ into the union of $X\setminus
S_V$ and $T^*S_V$.
 This operation can be thought as a
 degeneration of $X_\R$ to a real nodal 
symplectic manifold for which $V$ is precisely the vanishing cycle. 
Equivalently, the class $V$ is a real vanishing cycle if and only if, up to
deformation, 
$X_\R$
can be represented as the real 
symplectic sum of two real symplectic manifolds $(X_1,\omega_1,\tau_1)$
and $(X_0,\omega_{FS}\oplus
\omega_{FS},\tau_0)$ along
an embedded symplectic  
sphere $E$ of self-intersection
$-2$ in $X_1$ (hence of self-intersection
$2$ in $X_0$) where:
\begin{itemize}
\item $E$ is real and realizes the class $l_1+l_2$ in $H_2(X_0;\Z)$;
\item $V$ is represented by the deformation in $X$ of a representative
  of the
non-trivial class in $H_2(X_0\setminus E;\Z/2\Z)$.
\end{itemize}
By abuse, we still denote by
$V$ the non-trivial class in $H_2(X_0\setminus E;\Z/2\Z)$.
We refer to Section \ref{sec:sympl sum} for more details about the
symplectic sum 
operation. 
We denote by $X_\sharp$ the union of $(X_1,\omega_1,\tau_1)$
and $(X_0,\omega_{FS}\oplus
\omega_{FS},\tau_0)$ along $E$,  by
$L_\sharp$ the degeneration of $L$ as $X_\R$ degenerates to
$X_\sharp$, and by  $L_i$ the intersection $L_\sharp\cap X_i$. Note
that by construction we have $\partial L_i\subset \R E$.

Recall that $T^*S^2$ is equivariantly 
symplectomorphic to the complement of a smooth
real hyperplane section $E$ of 
a smooth real quadric in
$\C P^3$.
This real quadric is precisely the
 summand $(X_0,\omega_{FS}\oplus \omega_{FS},\tau_0)$ of
 $X_\sharp$. We depicted in Figure \ref{fig:SV} all possibilities for
 $\R X_0$ and $S_V$.
Choose a  diffeomorphism $\Psi$ between $T^*S^2$ and 
 the line bundle 
$\mathcal O_{\C  P^1}(-2)$
 of degree $-2$
over $E=\C P^1$, which restricts to a
symplectomorphism 
between
the  complements of the zero sections
 (note that $\Psi$ does not preserve the fibration).  
The summand  $(X_1,\omega_1,\tau_1)$  of $X_\sharp$
is obtained by removing from
$X$ a small
tubular neighborhood of $S_V$, and by gluing back via $\Psi$ a small
neighborhood of the zero section of 
$\mathcal O_{\C  P^1}(-2)$.
The homology groups $H_2(X_1;\mathbb Z)$ and $H_2(X;\mathbb
Z)$ are canonically identified, the class  $[E]$ being
identified with the class $\Psi^{-1}_*([E])$.
We implicitly use this identification throughout the text.

\begin{figure}[h!]
\begin{center}
\begin{tabular}{ccccc}
\includegraphics[width=3cm, angle=0]{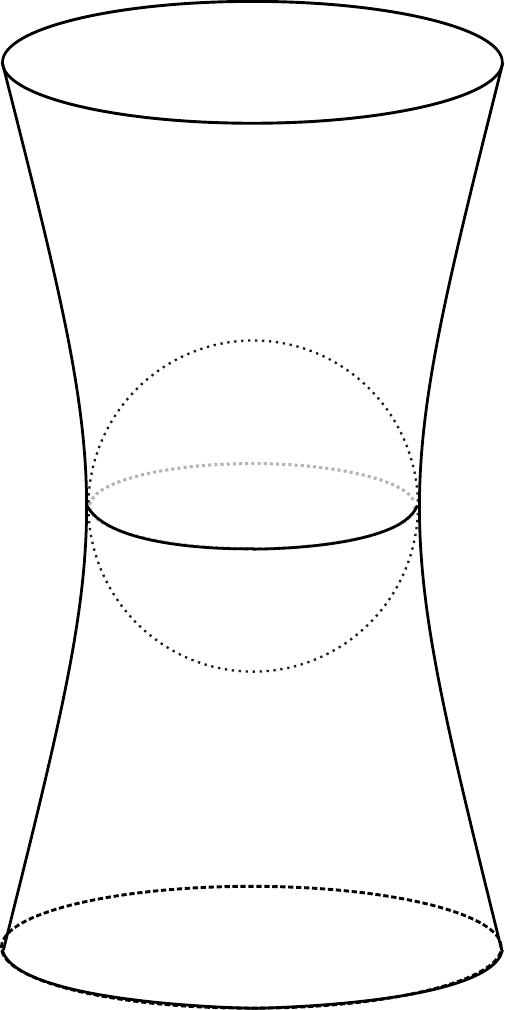}
\put(-110, 130){$\R X_0$}
\put(-50, 45){$S_V$}
& \hspace{3ex} &
\includegraphics[width=3cm, angle=0]{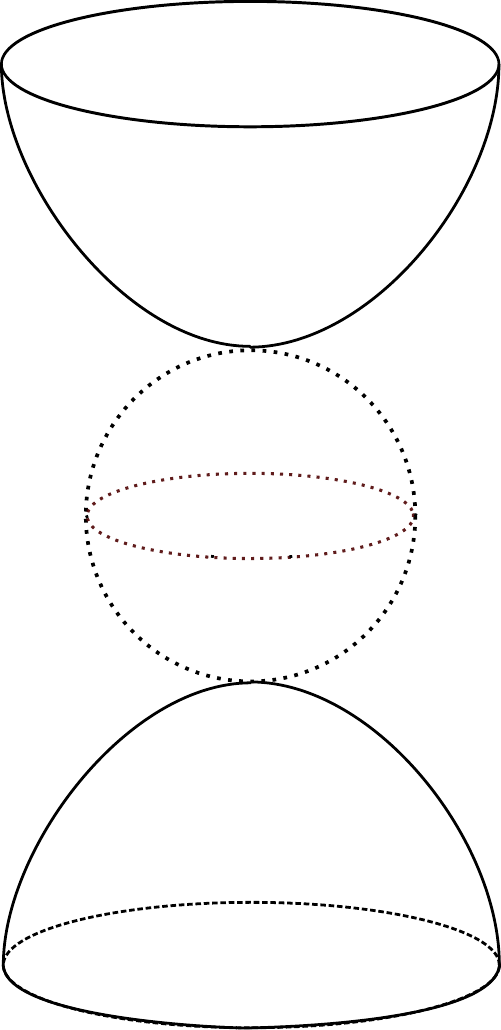}
\put(-110, 140){$\R X_0$}
\put(-110, 20){$\R X_0$}
\put(-10, 80){$S_V$}
& 
\includegraphics[width=3cm, angle=0]{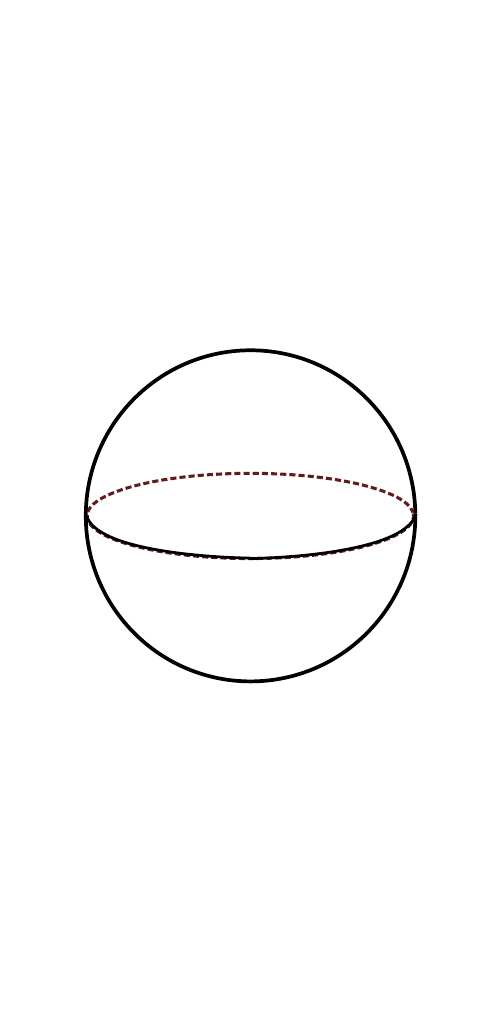}
\put(-70, 45){$S_V=\R X_0$}
& 
\includegraphics[width=3cm, angle=0]{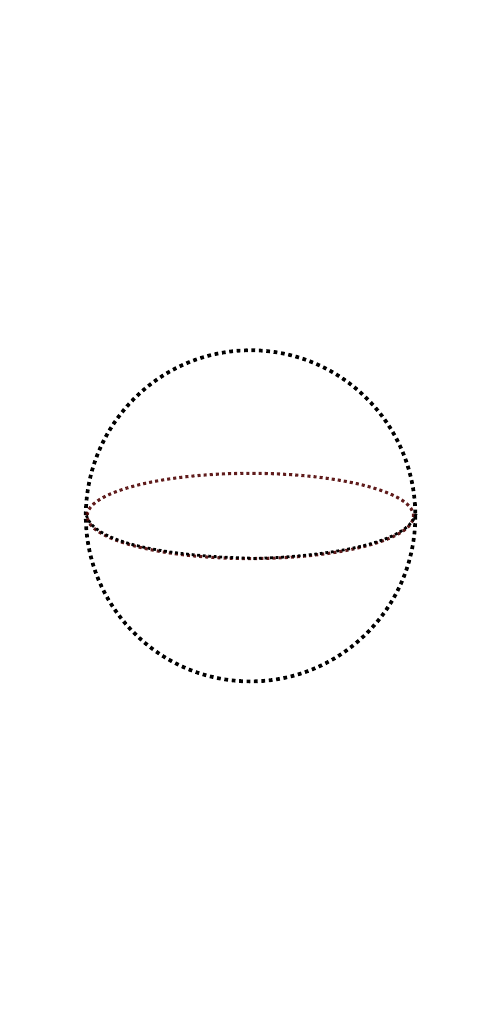}
\put(-50, 45){$S_V$}
\\ \\a) $\tau_0=\tau_{hy}$ && b) $\tau_0=\tau_{el}$ and  $\R E\ne
\emptyset$
& c)
$\tau_0=\tau_{el}$ and  $\R E=
\emptyset$
&d) $\tau_0=\tau_{em}$
\end{tabular}
\end{center}
\caption{Possibilities for $(X_0,\tau_0)$ and $S_V$}
\label{fig:SV}
\end{figure}

\medskip
Let $F$ be
a $\tau$-invariant class in $H_2(X_\sharp\setminus
L_\sharp;\Z/2\Z)$ having a  $\tau$-invariant 
representative $\F$, 
and define $\F_i=\F \cap X_i$.
Note that by construction we have $\partial \F_i\subset E$. 
Throughout the text, we  always assume that 
$\F$ satisfies the following conditions:
\begin{itemize}
\item either $\F\cap\R E=\emptyset$, or there exists a neighborhood
  $U$ of 
$\R E$ in $X_\sharp$ 
such that $\F\cap U\subset \R X_\sharp$
(i.e. $\F$ is either disjoint from $\R E$, or is  locally 
contained in $\R X_\sharp$ around $\R E$); 
\item one of the
two following assumptions hold:
\begin{enumerate}
\item[$(H_1)$]  $\F_0 \cup L_0$ is a cycle representing 
a multiple of $ V$ 
in
$H_2(X_0;\Z/2\Z)$;

\item[$(H_2)$]  $\tau_0=\tau_{el}$, and $\F_0\cup L_0=\D$ or 
$\F_0\cup L_0=\D\cup S_V$, where
$\D$  is   a
  $\tau$-invariant embedded disk with $\partial \D\subset E$ (all
  possibilities are depicted in
  Figure \ref{fig:H2}).
\begin{figure}[h!]
\begin{center}
\begin{tabular}{ccccc}
\includegraphics[width=3cm, angle=0]{Figures/H1_el2.pdf}
\put(-118, 140){$\D=L_0$}
\put(-118, 40){$\R X_0\setminus\D$}
\put(-10, 80){$S_V$}
& \hspace{5ex} &
\includegraphics[width=3cm, angle=0]{Figures/H1_el2.pdf}
\put(-118, 140){$\D\subset\F_0$}
\put(-118, 40){$\R X_0\setminus\D$}
\put(-10, 80){$S_V$}
& \hspace{5ex} &
\includegraphics[width=3cm, angle=0]{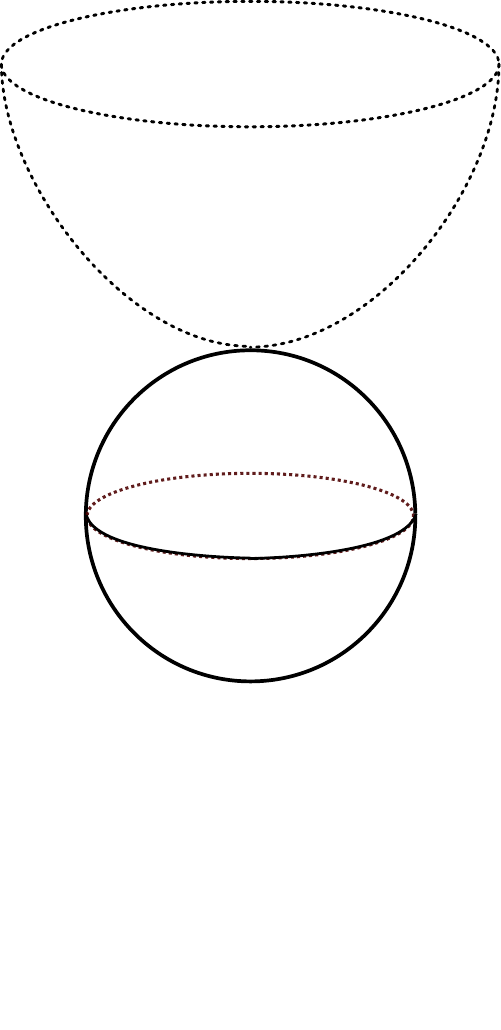}
\put(-118, 140){$\D\subset\F_0$}
\put(-70, 45){$S_V=\R X_0$}
\\ \\a)$\R E\ne\emptyset$ and $\F_0\subset S_V$&& b)$\R E\ne\emptyset$  and $L_0=\emptyset$
&& c) $\R E=\emptyset$  and  $L_0=\emptyset$
\end{tabular}
\end{center}
\caption{Possibilities for $\F_0\cup L_0$ under the assumption $(H_2)$}
\label{fig:H2}
\end{figure}
\end{enumerate}
\end{itemize}

\subsection{Vanishing Welschinger invariants}\label{sec:W vanish}
Next theorem is a key ingredient in the proof of Theorem
\ref{thm:chain spheres}, and 
will be proved in Section \ref{sec:proof vanish}.
\begin{thm}\label{thm:W vanish}
Suppose that $\F_0\cup L_0$ satisfies assumption $(H_1)$ and
contains $\R X_0$, and that $L_0$ is a disk.
\begin{enumerate}
\item If   $r\ge 2$,
then
 $$W_{X_{\R},L,F}(d,s)= 0.$$

\item If $r =1$ and $c_1(X)\cdot d-1\ge 2 $, then 
$$2^{\frac{c_1(X)\cdot d-4}{2}}\ | \ W_{X_\R,L,F}(d,s)
\quad\mbox{and}\quad (-1)^{\frac{ d^2  -c_1(X)\cdot d
    +2}{2}}W_{X_\R,L}(d,s)\ge 0.$$
\end{enumerate}
\end{thm}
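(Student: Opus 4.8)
The plan is to prove Theorem~\ref{thm:W vanish} by a degeneration argument, relating the Welschinger count in $X_\R$ to a count of curves in the reducible manifold $X_\sharp$ via the real symplectic sum formula. The starting observation is that, since $V$ is a real vanishing cycle, $X_\R$ degenerates to $X_\sharp = X_1 \cup_E X_0$, and a generic $\tau$-compatible almost complex structure on $X_\R$ with a long neck forces every curve $C \in \mathcal{C}(d,\x,J)$ to break into a curve $C_1 \subset X_1$ and a curve $C_0 \subset X_0 = \C P^1 \times \C P^1$. The first step is therefore to set up the correspondence: I would distribute the point constraints $\x$ so that all $r$ real points and $s$ conjugate pairs lie in $X_1$, and analyze which limit configurations $(C_0,C_1)$ can occur, keeping track of the intersection pattern with $E$ and the induced matching of branches along $\R E$. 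The Welschinger sign of $C$ must then be recovered from the signs of the pieces together with the combinatorics of the gluing, which is where the hypotheses on $\F_0 \cup L_0$ (assumption $(H_1)$, containing $\R X_0$, with $L_0$ a disk) enter: they control $\underline{C}\cdot F$ and the mass contribution near $E$.

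Next I would carry out the sign analysis on the $X_0$-component. Because $X_0 = \C P^1 \times \C P^1$ and the relevant class is a multiple of $l_1 + l_2$ (as $\F_0 \cup L_0$ represents a multiple of $V$), the curves $C_0$ are highly constrained; using the two preliminary lemmas and Lemma~\ref{lem:real disk}, the class $\underline{C_0}\cdot F$ modulo $2$ and the nodal contributions can be computed explicitly. The key structural point is that each real matching point on $\R E$ can be connected to its conjugate in essentially two ways, and these two choices differ by a sign in the Welschinger weight; when $r \ge 2$, the surplus of real constraints produces an involution on the set of limit curves (roughly, swapping the roles of two real points, or reversing a matching) that pairs curves of opposite sign, forcing the total count to vanish. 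This is the heart of part~(1). For part~(2), with $r = 1$ there is no such free involution, but the $2$-adic valuation statement should follow from counting the independent binary gluing choices along $\R E$ (each contributing a factor of $2$ and all contributing with the same sign), giving the power $2^{(c_1(X)\cdot d - 4)/2}$; the sign statement for $F = [\R X \setminus L]$ would come from pinning down the parity of $m_{L,F}$ in terms of $d^2 - c_1(X)\cdot d + 2$ using the adjunction-type computation on the $X_0$ side.

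I expect the main obstacle to be the real version of the symplectic sum / gluing formula itself: establishing that the limit curves genuinely correspond to pairs $(C_0, C_1)$ with prescribed tangency along $E$, that the gluing is transverse and sign-preserving in the appropriate sense, and above all that the Welschinger sign $(-1)^{m_{L,F}(C)}$ factors correctly as a product of a sign depending on $C_1$ and a sign depending on the matching data on $\R E$. This factorization is delicate because $m_{L,F}(C)$ counts solitary nodes and the intersection $\underline{C}\cdot F$, both of which can receive contributions from the neck region as the curve is glued; tracking these contributions requires the local model $T^*S^2 \cong \mathcal{O}_{\C P^1}(-2)$ described above and a careful bookkeeping of how the halves of $C \setminus \widetilde{\R C}$ split across $E$. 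The vanishing in part~(1) then reduces to exhibiting the sign-reversing involution cleanly, and I would expect to invoke Theorem~\ref{mainthm} (the real Abramovich--Bertram--Vakil formula) together with Corollary~\ref{cor:W equal} to handle the parts of the argument that are genuinely about the relation between the two enumerative counts rather than about signs.
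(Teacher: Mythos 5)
Your general framework (degenerating $X_\R$ to $X_\sharp=X_1\cup_E X_0$ via the real symplectic sum) is the right one, but your very first step --- putting \emph{all} $r$ real points and $s$ conjugate pairs on the $X_1$ side --- cannot produce the vanishing, and this is a genuine gap. With all constraints in $X_1$ you recover exactly the setting of Theorem \ref{mainthm}, and under the hypotheses of Theorem \ref{thm:W vanish} the class $\F_0\cup L_0$ satisfies $(H_1)$ with $\tau_0=\tau_{el}$ and $\chi(\R X_0)=2$, so the formula reads $W_{X_\R,L,F}(d,s)=\sum (-1)^{m_{L_1,\F_1}(C_1)+\gamma b}\,2^b$ over curves with $a=0$: there is no cancellation mechanism in sight. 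The sign-reversing ``reversal of a matching'' you invoke --- post-composing with $\tau_0$ the components of $C_0$ through a conjugate pair $\{p,\tau_0(p)\}$ --- reverses the Welschinger sign only under hypothesis $(H_2)$, via Lemma \ref{lem:lem3} (a $\tau_{el}$-invariant disk with boundary on $E$ meets $C_p$ and $C_{\tau_{el}(p)}$ with different parities); under $(H_1)$, where $\F_0\cup L_0$ is a \emph{closed} cycle, the two real gluings at each conjugate pair contribute with the \emph{same} sign --- that is precisely why $\mu^2$ carries the factor $2^b$ instead of vanishing. Likewise ``swapping the roles of two real points'' is not an involution on the set of limit curves (the configuration is fixed), and Corollary \ref{cor:W equal} only yields $W_{X_\R,L,F}=W_{X_\R,L,F+V}$, which can never force $W=0$.

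The paper's actual mechanism is different: it chooses the real sections so that exactly \emph{one real point} $p_0$ degenerates into $X_0$, i.e.\ $\x(0)\cap X_0=\{p_0\}$. For a real limit $\overline f$, the component $\overline C'$ through $p_0$ must realize $l_1+l_2$ (a class $l_i$ would force a $\tau_{el}$-conjugate component meeting it at $p_0$, contradicting genericity of $J_0$); when $r\ge 2$ there remains a real point in $L_1$, and since $L_0$ being a disk makes $\overline L\setminus \R E$ disconnected, the transverse case $\overline f_{|\overline C'}\in\CC^{2e_1,0}(l_1+l_2,\{p_0\}\cup\x_E,J_0)$ is excluded: the two components of $C_1$ would each be real and cross the separating circle $\R E$ exactly once, which is impossible. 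Hence every real limit is \emph{tangent} to $E$, and Proposition \ref{prop:real degeneration} concludes: the two smoothings of the tangency are both non-real for one sign of $t$ (contribution $0$), and both real for the other sign, in which case the node $q$ created by the smoothing is a real crossing for one deformation and a solitary node for the other, so the two carry opposite Welschinger signs and cancel --- this, not an involution on limit curves, is the heart of part (1). For part (2), the divisibility does not come from binary gluing choices along $\R E$ as you suggest (with $r=1$ there is no lower bound on $b$ forcing a power of $2$ per curve); it comes from the $2^{(c_1(X)\cdot d-4)/2}$ ways of distributing the $s$ conjugate pairs of constraints between the two conjugate-exchanged components of $C_1$ in the surviving $\CC^{2e_1,0}$ configurations. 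Your adjunction argument for the sign when $F=[\R X\setminus L]$ is, on the other hand, in line with the paper: since $\overline C'$ is the only real component and $\overline f_{|\overline C'}$ is an embedding, the number of real solitary nodes has the parity of $\frac{d^2-c_1(X)\cdot d+2}{2}$.
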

Note that the assumptions of Theorem \ref{thm:W vanish} imply that
$\tau_0=\tau_{el}$ and $\R E\ne \emptyset$.
In the Lagrangian sphere contraction presentation, the condition that 
 $L_0$ is a disk translates to the condition
that $L\cap S_V$ is reduced to a single intersection point.

\subsection{From $X_1$ to $X$}\label{sec:relations}
Here we reduce the computation of Welschinger invariants of $X_\R$ to
enumeration of real $J$-holomorphic curves in $(X_{1},\omega_1,\tau_1)$ for
a $\tau_{1}$-compatible
 almost complex structure $J$ for which  $E$ is $J$-holomorphic.
 
\begin{defi}\label{def:mult}
Let $J$ be a  $\tau_1$-compatible almost complex structure on
$(X_1,\omega_1,\tau_1)$ for which the curve $E$ is $J$-holomorphic, and 
let $C_1$ be an immersed real rational $J$-holomorphic  curve 
intersecting  $E$ transversely. 
We denote  by $a$  the  number of
points in $\R C_1\cap \R E$,  by $b$ the number of pairs
of $\tau_1$-conjugated points in $C_1\cap E$, and by 
$m_{L_1,\F_1}(C_1)$ the number of intersection points of a half of
$C_1\setminus \R C_1$ with $L_1\cup \F_1$.
Finally, let 
$k\ge 0$ be an integer.
\begin{enumerate}
\item If $\F_0$ satisfies assumption $(H_1)$, then we define
$$\mu^0_{L_\sharp,\F_0,k}(C_1)=(-1)^{m_{L_1,\F_1}(C_1)+ \gamma(a+b)}
\sum_{k=a_k+2b_k}
{a \choose a_k}{b\choose b_k}$$
and
$$\mu^2_{L_\sharp,\F_0,k}(C_1)=\left\{ \begin{array}{ll}
         (-1)^{m_{L_1,\F_1}(C_1)+\gamma b}\ 2^b & \mbox{if $a=0$ and $k=b$};\\
         0 & \mbox{otherwise}.\end{array} \right.,$$
where $\gamma=0,1$ is such that $[\F_0\cup L_0]=\gamma V$ in $H_2(X_0 ;\Z/2\Z)$.

\item If $\F_0$ satisfies assumption $(H_2)$, then we define
$$\mu_{L_\sharp,\F_0,k}(C_1)=\left\{ \begin{array}{ll}
         (-1)^{m_{L_1,\F_1}(C_1)} & \mbox{if $k=a=b=0$};\\
         0 & \mbox{otherwise}\end{array} \right.$$
\end{enumerate}

\end{defi}

As above let $d\in
H_2(X;\Z)$
and  $r,s\in\Z_{\ge 0}$ such that  
$$c_1(X)\cdot d - 1= r+2s.$$
Choose a configuration $\x$ made of $r$ points in $L_1$ and  $s$ 
 pairs of $\tau$-conjugated 
points in $X_1\setminus \R X_1$.
Let $J$ be  a $\tau_1$-compatible almost complex structure 
for which  $E$ is $J$-holomorphic.

For each integer $k\ge 0$, we denote by $\mathcal C_{1,k}(d,\underline x,J)$
the set of all irreducible rational real $J$-holomorphic curves in
$(X_1,\omega_1,\tau_1)$ 
  passing through all points in $\x$, realizing the class
$d-k[E]$, and such that $L_1$ contains the 1-dimensional
part of $\R C_1$. 
For a generic choice of $J$ satisfying the above conditions, 
it follows from Lemma \ref{cor:transverse finite} and Proposition
\ref{prop:finite} that
the set
$\mathcal C_{1,k}(d,\underline x,J)$ is finite, 
and that any curve in $\mathcal C_{1,k}(d,\underline x,J)$ is nodal
and intersects 
$E$ transversely. 
Moreover $\mathcal C_{1,k}(d,\underline x,J)$ is non-empty only for finitely
many values of $k$.

We prove next theorem in Section \ref{sec:proof mainthm}.
Recall that notations have been introduced in Section \ref{sec:degeneration}.
\begin{thm}\label{mainthm}
Suppose that $L_1 \ne \emptyset$ if $r>0$.
Then for a generic choice of $J$,
 the two following claims hold.
\begin{enumerate}
\item If $\F_0$ satisfies assumption $(H_1)$, then,
with the convention that $\chi(\emptyset)=0$,
   one has 
$$W_{X_{\R},L,F}(d,s)=\sum_{k\ge 0}
\  \sum_{C_1\in\mathcal C_{1,k}(d,\underline x,J)}
\mu^{\chi(\R X_0)}_{L_\sharp,\F_0,k}(C_1).$$

\item If $\F_0$ satisfies assumption $(H_2)$,
then  one has
$$W_{X_{\R},L,F}(d,s)=
\  \sum_{C_1\in\mathcal C_{1,0}(d,\underline x,J)}
\mu_{L_\sharp,\F_0,0}(C_1).$$

\end{enumerate}
\end{thm}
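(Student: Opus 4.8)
The plan is to realize $X_\R$ as the real symplectic sum of $(X_1,\omega_1,\tau_1)$ and $(X_0,\omega_{FS}\oplus\omega_{FS},\tau_0)$ along $E$, and to apply a real enhancement of the symplectic sum formula obtained by stretching the neck of $J$ in a neighborhood of $E$. Since the whole configuration $\x$ has been chosen inside $X_1$, any sequence of real $J$-holomorphic curves through $\x$ realizing $d$ converges, after neck-stretching, to a real holomorphic building whose $X_1$-part is a single irreducible rational curve $C_1$ realizing a class $d-k[E]$ and meeting $E$ transversely, together with a union of rational components lying in $X_0$. That the limit has exactly this shape---in particular that $C_1$ is nodal, irreducible, transverse to $E$, and that $k$ takes only finitely many values---is the content of Lemma \ref{cor:transverse finite} and Proposition \ref{prop:finite}, which I would take as given.

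The second step is to identify the $X_0$-components. Because no point of $\x$ lies in $X_0$, these components carry no incidence constraint beyond matching $C_1$ along $E$; by the ruling/area argument of Section \ref{sec:formula} (the classes $l_1$ and $l_2$ have equal symplectic area, so every irreducible $J$-holomorphic curve of positive area meeting $E$ minimally is a ruling fibre), they are unions of ruling fibres, each meeting $E$ in a single point. Their homology classes together with the degree-matching condition along $E$ pin down $k$, and the number of ways of distributing such fibres over the $a$ real points and $b$ conjugate pairs of $C_1\cap E$, compatibly with the splitting $k=a_k+2b_k$ and with the action of $\tau_0$, is exactly what produces the combinatorial factors of Definition \ref{def:mult}. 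This is where the two real structures diverge, governed by $\chi(\R X_0)$: when $\chi(\R X_0)=0$ (i.e. $\tau_0=\tau_{hy}$ or $\tau_{em}$) the involution fixes each ruling class, real fibres are available, and the distribution count yields $\binom{a}{a_k}\binom{b}{b_k}$, hence $\mu^0$; when $\chi(\R X_0)=2$ (i.e. $\tau_0=\tau_{el}$) the rulings are exchanged, so no real point can be capped by a real fibre---forcing $a=0$---while each capped conjugate pair offers the two ways of assigning the classes $l_1,l_2$ to the two conjugate fibres, giving the factor $2^b$ of $\mu^2$.

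The third and decisive step is the gluing count carried out with Welschinger signs. For each limiting configuration I would run the gluing construction to recover the genuine real curves $C$ in $X_\R$ lying near it, checking that each admissible capping produces a single such $C$, so that summing over configurations reproduces the factors above; the superscript $\mu^{\chi(\R X_0)}$ simply selects $\mu^0$ or $\mu^2$ according to the real structure of $X_0$. It then remains to express the sign $(-1)^{m_{L,F}(C)}$ in terms of the data on $X_1$. The claim is the congruence
$$m_{L,F}(C)\equiv m_{L_1,\F_1}(C_1)+\gamma(a+b)\pmod 2,$$
where $\gamma$ is determined by $[\F_0\cup L_0]=\gamma V$: the nodes and relative $F$-intersections of $C$ situated on the $X_1$-side reassemble into $m_{L_1,\F_1}(C_1)$, while along each of the $a$ real points and $b$ conjugate pairs where $C$ crosses into $X_0$, the half of $C$ meets $\F_0\cup L_0$ with multiplicity $\gamma$ (since $V=l_1+l_2$), contributing $\gamma(a+b)$ to $\underline C\cdot F$. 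Carrying out this mass bookkeeping---tracking solitary nodes, pairs of conjugate nodes, and the relative intersection $\underline C\cdot F$ across the neck, using the explicit description of $\F_0\cup L_0$ from Section \ref{sec:degeneration}---is the main obstacle and the genuinely real-geometric part of the argument; the underlying complex count is of Abramovich--Bertram--Vakil type.

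Assembling the signed gluing counts over all configurations and all $k$ then yields the formula of part $(1)$. For part $(2)$, where $\tau_0=\tau_{el}$ and $\F_0\cup L_0$ is the disk $\D$ (or $\D\cup S_V$), the geometry of $\D$ leaves no room to attach ruling fibres, so only the uncapped configuration $k=a=b=0$ survives and the sum collapses to the single term indexed by $\mathcal C_{1,0}(d,\x,J)$ with sign $(-1)^{m_{L_1,\F_1}(C_1)}$. Beyond the sign analysis, the one point requiring separate care is the real enhancement of the symplectic sum formula itself: that the gluing can be performed equivariantly with respect to the antiholomorphic involution, and that transversality is achievable within the class of $\tau_1$-compatible almost complex structures for which $E$ is $J$-holomorphic.
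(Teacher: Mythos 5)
Your treatment of part (1) follows essentially the same route as the paper: degenerate via the (real) symplectic sum, show that with $\x(0)\cap X_0=\emptyset$ the limit has an irreducible $X_1$-part transverse to $E$ capped by ruling fibres (Proposition \ref{prop:degeneration}), count real cappings as in Corollary \ref{lem:lem2} to get the binomial factors resp. $2^b$, establish the sign congruence $m_{L,F}(C)\equiv m_{L_1,\F_1}(C_1)+\gamma(a+b)$ from $[\F_0\cup L_0]=\gamma V$, and check that no new nodes appear near $E$ under gluing (the paper does this via the local model $zw=\mu(t)$ with $s=1$, or by the adjunction count of nodes); you flag this bookkeeping rather than carry it out, but the outline is the paper's.

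Part (2), however, contains a genuine error. You claim that under $(H_2)$ ``the geometry of $\D$ leaves no room to attach ruling fibres, so only the uncapped configuration $k=a=b=0$ survives.'' This is false: assumption $(H_2)$ changes only the class $F$, i.e.\ the mass computation, not the almost complex structure or the moduli of limit curves, which are exactly those of the $\tau_{el}$ case of part (1) --- and there the configurations with $b=k>0$ contribute the manifestly non-zero terms $\pm 2^b$ of $\mu^2$. Indeed $a=0$ is forced (a fibre through a real point of $\R E$ cannot be $\tau_{el}$-invariant since $\tau_{el}$ exchanges $l_1$ and $l_2$), but at each conjugate pair $\{p,\tau_0(p)\}\subset C_1\cap E$ one may cap $p$ by the fibre in class $l_1$ and $\tau_0(p)$ by the fibre in class $l_2$, or vice versa; both choices yield real limits which glue to genuine real curves in $X$ counted by $W_{X_\R,L,F}(d,s)$. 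The correct mechanism, and the point of the paper's proof, is pairwise sign cancellation rather than non-existence: given $\overline f'\in\CC_{\overline f}$ with $b\neq 0$, replacing the two fibre components over a chosen pair $\{p,\tau_0(p)\}$ by their $\tau_0$-images defines a sign-reversing involution on $\CC_{\overline f}$, because by Lemma \ref{lem:lem3} the $\tau_{el}$-invariant disk $\D$ meets $C_p$ and $C_{\tau_{el}(p)}$ with opposite parities (as $[D_1\cup\D]=l_1$ and $[D_2\cup\D]=l_2$ distinguish the two rulings). Hence the signed contributions of all curves in $\mathcal C_{1,k}(d,\x,J)$ with $k>0$ cancel, and only the $k=a=b=0$ terms remain --- which is also why Definition \ref{def:mult}(2) sets $\mu_{L_\sharp,\F_0,k}(C_1)=0$ for $(k,a,b)\neq(0,0,0)$ instead of asserting that such curves do not occur. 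Without this cancellation argument your proof of part (2), and hence of Corollary \ref{cor:W equal}, does not go through.
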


Applying  Theorem \ref{mainthm}$(1)$
with $F=[\R X\setminus L]$,
 one obtains {\cite[Theorem 2.2]{BP}}.
Some instances of  Theorem
\ref{mainthm}$(1)$
when $\R X_0=S^1\times S^1$ 
have been known for sometimes, e.g. \cite{Br31,Br27,Kha,RasSal}. 
Since the publication of \cite{BP}, an
algebro-geometric
 proof  of   Theorem \ref{mainthm}$(1)$   appeared in
 \cite{Bru14} 
and in
\cite{IKS13} in 
 the particular cases when $X$ is a Del Pezzo surface of degree 
 two or more.
Theorem \ref{mainthm}$(2)$ immediately implies the following
corollary.
\begin{cor}\label{cor:W equal}
 Suppose that $V\in H_2(X\setminus L;\Z/2\Z)$ and that $\F_0$ satisfies assumption $(H_2)$. 
Then 
$$W_{X_\R,L,F}(d,s)= W_{X_\R,L,F+V}(d,s). $$
\end{cor}

\subsection{Applications of Theorem \ref{mainthm}$(1)$}
We do not explicitly use Theorem \ref{mainthm}$(1)$ in the proof of Theorem
\ref{thm:chain spheres}, nevertheless its proof is almost contained in the
proof of  Theorem \ref{mainthm}$(2)$. Theorem \ref{mainthm}$(1)$ has
many interesting applications, in particular in explicit computations
of Welschinger invariants, see \cite{Bru14,IKS13}.
We present two other consequences.

\medskip
We first
 relate some tropical
Welschinger invariants  to genuine Welschinger invariants of
the quadric ellipsoid.  We refer to \cite{IKS09} for the definition of
tropical Welschinger invariants.
 The only   homology classes  of
 $(X_0,\omega_{FS}\oplus\omega_{FS},\tau_{el})$ realized by real curves
 are  of the form $d(l_1+l_2)$ with $d\in\Z_{>0}$.
We say that a tropical curve in $\mathbb R^2$ is of class 
$(a,b)$
in
the tropical second Hirzebruch surface $\mathbb T\mathbb F_2$ if
its Newton polygon has vertices $(0,0)$, $(0,a)$, $(b,a)$, and
$(2a+b,0)$. 
We denote by $W_{\mathbb T\mathbb  F_2}(d)$ the irreducible tropical
Welschinger invariant of $\mathbb T\mathbb
F_2$ for curves of class $(d,0)$.
\begin{prop}\label{class-trop}
For any  $d\in\Z_{>0}$, we have 
$$W_{X_{0,el},S^2}(dl_1+dl_2,0)=W_{\mathbb T \mathbb F_2}(d). $$
\end{prop}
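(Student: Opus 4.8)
The plan is to recognise Proposition \ref{class-trop} as a special case of Theorem \ref{mainthm}$(1)$ applied to the quadric ellipsoid itself, followed by Mikhalkin's real tropical correspondence for the complementary toric piece. First I would set $X_\R=(X_0,\omega_{FS}\oplus\omega_{FS},\tau_{el})$ with $L=\R X_{el}=S^2$. Since $\R X_{el}$ is connected we have $F=[\R X\setminus L]=0$, so the representative $\F$ is trivial and hypothesis $(H_1)$ holds. Next I would exhibit the degeneration: contracting the real Lagrangian sphere responsible for the standard nodal degeneration of the quadric to the quadric cone presents $X_{0,el}$ as a real symplectic sum $X_1\cup_E X_0$ along the $(+2)$--conic $E$ of class $l_1+l_2$, where the summand carrying the vanishing sphere is of ellipsoid type ($\tau_0=\tau_{el}$, hence $\chi(\R X_0)=2$) and $X_1$ is deformation equivalent to the real toric surface $\mathbb F_2$ whose tropicalisation is $\mathbb T\mathbb F_2$. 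I would arrange the degeneration so that $L_1\ne\emptyset$ and place the $r=4d-1$ real constraints of $\x$ on $L_1$; recall $c_1(X_0)\cdot(dl_1+dl_2)=4d$, so with $s=0$ one indeed has $r=4d-1$.

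Then I would feed this into Theorem \ref{mainthm}$(1)$. Because $\chi(\R X_0)=2$ the relevant multiplicity is $\mu^2_{L_\sharp,\F_0,k}$, which by Definition \ref{def:mult} vanishes unless a curve $C_1$ meets $E$ in no real point ($a=0$) and in exactly $k=b$ pairs of conjugate points, where it equals $(-1)^{m_{L_1,\F_1}(C_1)+\gamma b}\,2^{b}$, with $\gamma\in\{0,1\}$ fixed by $[\F_0\cup L_0]=\gamma V$. Hence Theorem \ref{mainthm}$(1)$ collapses to
$$W_{X_{0,el},S^2}(dl_1+dl_2,0)=\sum_{b\ge 0}\ \sum_{\substack{C_1\in\mathcal C_{1,b}(d,\x,J)\\ a(C_1)=0}}(-1)^{m_{L_1,\F_1}(C_1)+\gamma b}\,2^{b},$$
a signed, $2$--power--weighted count of real rational curves in $X_1$ through $\x$, in the classes $(dl_1+dl_2)-b[E]$, meeting $E$ only along pairs of conjugate points.

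Finally I would identify this enumeration in the toric surface $X_1$ with $W_{\mathbb T\mathbb F_2}(d)$ by invoking the real tropical correspondence theorem (Mikhalkin, in the real form used in \cite{IKS09}) for curves of class $(d,0)$, dual to the Newton triangle with vertices $(0,0)$, $(0,d)$ and $(2d,0)$: the sum over $b$ corresponds to the distribution of tropical ends along the edge dual to the contracted curve $E$, each such even--weight end contributing the factor $2$ matching $2^{b}$, while the constraint $a=0$ matches the absence of real (odd) ends in that direction. The main obstacle is precisely this last identification: one must verify term by term that the analytic multiplicity $(-1)^{m_{L_1,\F_1}(C_1)+\gamma b}2^{b}$ furnished by $\mu^2$ coincides with the combinatorial Welschinger multiplicity on $\mathbb T\mathbb F_2$, and that the real toric structure on $X_1$ together with the configuration $\x$ is the datum computed by $W_{\mathbb T\mathbb F_2}(d)$. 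Granting the correspondence theorem, summing over $b$ then yields the asserted equality.
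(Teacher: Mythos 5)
Your setup coincides with the paper's: you degenerate the ellipsoid into $X_1\cup_E X_0$ with $X_1$ the real $\mathbb F_2$ (the blow-up of the real quadratic cone) and $\R X_0=S^2$, and you correctly invoke Theorem \ref{mainthm}$(1)$ with the multiplicity $\mu^2_{L_\sharp,\F_0,k}$ of Definition \ref{def:mult}, arriving at a sum over $b$ of $2^b$-weighted signed counts of real curves in the classes $dh-b[E]=(d-b)h+2bf$ meeting $E$ only in conjugate pairs. (A small point in passing: with $F=0$ one has $\F_0\cup L_0=L_0=\R X_0$, which represents $V=l_1+l_2$ in $H_2(X_0;\Z/2\Z)$, so $\gamma=1$.) The genuine gap is exactly the step you flag as ``the main obstacle'', and your proposed resolution of it does not work. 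First, $W_{\mathbb T\mathbb F_2}(d)$ counts tropical curves of class $(d,0)$, whose Newton polygon is the triangle with vertices $(0,0)$, $(0,d)$, $(2d,0)$: the edge dual to the $(-2)$-curve is the top edge of the trapezoid of class $(a,b)$, of lattice length $b$, which is \emph{empty} when $b=0$. A class-$(d,0)$ tropical curve therefore has no ends at all in the direction dual to $E$, so there is nothing for your $b>0$ terms --- which moreover live in the different classes $(d-b,2b)$ --- to match against. Second, in the tropical Welschinger multiplicity of \cite{IKS09} a curve with an edge of even weight contributes $0$, not a factor $2$; the weight $2^b$ produced by $\mu^2$ has no term-by-term counterpart in that combinatorial count.

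The paper's proof runs the comparison in the opposite direction, and this is the idea you are missing: rather than interpreting the $b>0$ terms tropically, one chooses the configuration so that they vanish identically. By \cite{Mik1}, through a tropically generic configuration of $4d-1$ points in $\R^2$, every rational tropical curve of \emph{every} class $(d-k,2k)$ has all $4d$ of its unbounded edges of weight $1$; lifting such a configuration yields a real configuration $\x$ of $4d-1$ points in $\R\mathbb F_2$ such that every real rational curve of class $(d-k)h+2kf$ through $\x$ intersects the $(-2)$-curve only in \emph{real} points, i.e.\ $b=0$ for every relevant curve. Since $\mu^2_{L_\sharp,\F_0,k}$ vanishes unless $a=0$ and $k=b$, all terms with $k\ge 1$ drop out for this particular $\x$ (invariance of the Welschinger count makes this special choice legitimate), and the formula of Theorem \ref{mainthm}$(1)$ collapses to the plain signed count of real rational curves of class $dh$ through $\x$, which are disjoint from $E$ because $dh\cdot[E]=0$; Mikhalkin's correspondence theorem then identifies this count with $W_{\mathbb T\mathbb F_2}(d)$. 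Your sum-over-$b$ formula is correct for a generic configuration, but without the tropical-limit choice of $\x$ the $b>0$ terms need not vanish, and they cannot be absorbed into the class-$(d,0)$ tropical count in the way you describe.
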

\begin{proof}
We consider the second Hirzebruch surface 
$\mathbb F_2$ equipped with its real structure induced by the blow up
at the origin of the real quadratic cone with equation
$x^2+y^2-z^2=0$.
We denote respectively by $h$ and $f$
the class in $H_2(\mathbb F_2;\Z)$ of a hyperplane section
and of a fiber.
According to \cite{Mik1},
if  $\x^\mathbb T$ is a tropically generic 
  configuration  of $4d-1$ points in
$\mathbb R^2$, then  any rational tropical  curve in
$\mathbb T\mathbb F_2$ of
 class
 $(d-k, 2k)$ and containing $\x^\mathbb T$ 
 has 
$4d$ unbounded edges of weight 1. Still by
\cite{Mik1}, this implies the existence of a generic configuration
$\x$ of 
$4d-1$ points in
$\mathbb R\mathbb F_2$ such that any real algebraic rational curve in
$\mathbb F_2$ of
 class
 $(d-k)h +2kf$ and containing $\x$ 
intersect the $(-2)$-curve only in real points.
Now the corollary follows from  Theorem \ref{mainthm}$(1)$
applied with $\R X_0=S^2$.
\end{proof}

It is proved in \cite{IKS04} that
given a real toric Del Pezzo surface $X$
equipped with its 
tautological real toric structure and
a class $d\in H_2(X;\Z)$, we have
$$W_{X_\R,\R X}(d,0)\ge W_{X_\R,\R X}(d,1).$$
The same idea we used in the proof of
Proposition \ref{class-trop} combined with
Theorem \ref{mainthm} and {\cite[Theorem 3.12]{Bru14}}
provide a natural generalization of this
formula in the particular cases when $X$ is  a Del Pezzo surface of
degree at least three.

\begin{prop}\label{decr}
Let $(X,\omega)$ be a  
symplectic
4-manifold symplectomorphic/deformation equivalent to  
a Del Pezzo surface of
degree at least three.
If  $X_{\R}=(X,\omega,\tau_1)$ and $X_{\R}'=(X,\omega,\tau_2)$ are
two real structures on $(X,\omega)$,  
then
for any $d\in H_2(X;\Z)$ one has
$$W_{X_{\R},L_1}(d,0)\ge  W_{X_{\R}',L_2}(d,0)\ge 0\quad \text{if}\quad
\chi(\R X)\le \chi(\R X').$$
\end{prop}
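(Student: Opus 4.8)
The plan is to deduce the inequality from two applications of Theorem \ref{mainthm}$(1)$ to a common degeneration. Since $(X,\omega)$ is fixed and only the real structure varies, I would first use the classification of real del Pezzo surfaces of degree at least three (\cite{Sil89,Kol97}) to connect $\tau_2$ to $\tau_1$ by a chain of elementary moves, each of which contracts a real Lagrangian sphere $S_V$, presents $X$ as a real symplectic sum $X_1\#_E X_0$ along a $(-2)$-sphere, and re-glues the quadric summand $(X_0,\omega_{FS}\oplus\omega_{FS},\tau_0)$ with a different real structure $\tau_0$. The crucial point, to be extracted from the classification, is that within one such move the summand $(X_1,\omega_1,\tau_1)$ is unchanged, and the two real structures differ only by taking $\tau_0=\tau_{el}$ (so $\R X_0=S^2$, $\chi(\R X_0)=2$) for the structure $\tau_2$ with the larger real part, versus $\tau_0=\tau_{hy}$ (so $\R X_0=S^1\times S^1$, $\chi(\R X_0)=0$) for $\tau_1$. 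This lowers $\chi(\R X)$ by the prescribed amount and reduces the statement to a single pair differing by one move.

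Next, imitating the proof of Proposition \ref{class-trop} with {\cite[Theorem 3.12]{Bru14}} in the role of \cite{Mik1}, I would use that $X$ has degree at least three to produce a generic configuration $\x$ of $r$ real points on the relevant component of $\R X$ (here $s=0$, so there are no conjugate pairs) and a generic $\tau_1$-compatible $J$ with $E$ $J$-holomorphic, such that every curve $C_1\in\mathcal C_{1,k}(d,\x,J)$ meets $E$ only in real points; that is, one can force $b=0$ for all contributing curves, and {\cite[Theorem 3.12]{Bru14}} moreover presents the resulting counts as sign-definite sums. Applying Theorem \ref{mainthm}$(1)$ to the two presentations then writes $W_{X_\R,L_1}(d,0)$ and $W_{X_\R',L_2}(d,0)$ as sums over the same family of curves $C_1$ in $X_1$, weighted by $\mu^0_{L_\sharp,\F_0,k}$ and $\mu^2_{L_\sharp,\F_0,k}$ respectively.

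The heart of the matter is then the comparison of the two resulting sums once $b=0$ is achieved. Here $\mu^2_{L_\sharp,\F_0,k}(C_1)$ is supported on curves disjoint from $E$ and equals $(-1)^{m_{L_1,\F_1}(C_1)}$ there, whereas $\mu^0_{L_\sharp,\F_0,k}(C_1)=(-1)^{m_{L_1,\F_1}(C_1)+\gamma a}{a\choose k}$ also records every way of splitting off $k$ copies of $E$ from a class-$d$ curve. Using the sign-definiteness provided by {\cite[Theorem 3.12]{Bru14}}, which rules out any cancellation, one shows that both sums are nonnegative and that the hyperbolic one is at least the elliptic one, whence $W_{X_\R,L_1}(d,0)\ge W_{X_\R',L_2}(d,0)\ge 0$; iterating along the chain gives the proposition. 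This is the exact analogue, for a change of real structure in place of a change in the number $s$ of conjugate pairs, of the monotonicity $W_{X_\R,\R X}(d,0)\ge W_{X_\R,\R X}(d,1)$ of \cite{IKS04}.

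The main obstacle is the interplay of the first and third steps. On one hand, one must verify through the classification that an elementary change of real structure genuinely fixes $(X_1,\omega_1,\tau_1)$, so that Theorem \ref{mainthm}$(1)$ feeds both invariants with the very same curve family; this is what makes the two sums comparable at all. On the other hand, establishing $\sum\mu^0\ge\sum\mu^2\ge 0$ is delicate precisely because the elliptic-supported curves ($a=0$) need not match the hyperbolic contributions term by term once $d\cdot E\neq 0$: the argument must track both the signs $(-1)^{m_{L_1,\F_1}(C_1)+\gamma a}$ and the way a class-$d$ curve in $X$ splits off copies of $E$. The positivity of {\cite[Theorem 3.12]{Bru14}} is indispensable here, as it is exactly what forbids the cancellations that could otherwise reverse the inequality.
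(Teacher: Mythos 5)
Your strategy coincides with the paper's: both invariants are computed by Theorem \ref{mainthm}$(1)$ from a single real summand $(X_1,\omega_1,\tau_1)$ carrying the $(-2)$-curve $E$, glued to the quadric with $\tau_0=\tau_{hy}$ for the structure with smaller $\chi(\R X)$ and with $\tau_0=\tau_{el}$ for the larger one, and positivity is imported from {\cite[Theorem 3.12]{Bru14}} exactly as in Proposition \ref{class-trop}. But the two steps you yourself flag as obstacles are genuine gaps, not verifications you postponed. First, the existence of the common summand $X_1$ does not follow from the classification of real structures by abstract ``elementary moves''; the paper produces it by an explicit construction: $X_1=\widetilde{\C P}^2_6(\kappa)$ is the blow-up of $\C P^2$ at $6-2\kappa$ real points and $\kappa$ conjugate pairs chosen to lie on a smooth real conic with non-empty real part, and $E$ is the strict transform of that conic. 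The hyperbolic and elliptic sums along this $E$ then yield $\C P^2_6(\kappa)$ and $\C P^2_6(\kappa+1)$, whose Euler characteristics $-5+2\kappa$ step through the entire chain of real structures. This concrete model also settles a point your proposal passes over in silence: {\cite[Theorem 3.12]{Bru14}} operates at the integrable structure $J_{st}$, so one must justify that $J_{st}$ may play the role of the generic $J$ in Theorem \ref{mainthm}; the paper does this via {\cite[Lemma 3.3.1]{McS}}, using that $E$ and its multiples are the only curves of self-intersection less than $-1$ in $\widetilde{\C P}^2_6(\kappa)$.

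Second, your concluding step (``one shows that the hyperbolic sum is at least the elliptic one'') is precisely the assertion to be proved, and your anticipated difficulty about mismatched elliptic and hyperbolic terms does not materialize: once the configuration $\x$ from {\cite[Theorem 3.12]{Bru14}} forces $b=0$ for every curve in every $\mathcal C_{1,k}(d,\x,J_{st})$, the two applications of Theorem \ref{mainthm}$(1)$ give a difference that the paper writes in closed form as
$$W_{\C P^2_6(\kappa),L_1}(d,0) - W_{\C P^2_6(\kappa+1),L_2}(d,0)\ =\
\sum_{k\ge 1} {d\cdot [E] \choose k} \sum_{C_1\in\mathcal
  C_{1,k}(d,\x,J_{st})}(-1)^{m_{L_1,\R X_1\setminus L_1}(C_1)},$$
where each inner sum is nonnegative by the same theorem of \cite{Bru14}; no separate cancellation analysis is needed, and nonnegativity of each individual invariant comes from the same sign-definiteness. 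Finally, note that the statement also covers $\C P^1\times\C P^1$, where your uniform appeal to {\cite[Theorem 3.12]{Bru14}} does not run as stated: there the paper takes $X_1$ to be a real $\mathbb F_2$ as in Proposition \ref{class-trop} and replaces the positivity input by the floor diagrams of \cite{Br6b}. So while your blueprint is the right one, the content of the paper's proof lies exactly in the two places your proposal leaves open.
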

\begin{proof}
We first prove the proposition in the case when $(X,\omega)$ is
deformation equivalent to $\C
P^2$ blown up at  six points.
We consider $\C P^2$ and its blown up equipped with the standard
complex structure $J_{st}$.
Let us denote by $\C P^2_6(\kappa)$ 
the blow up of $\C P^2$ 
in $6-2\kappa$ real points and $\kappa$ pairs of conjugated points,
such that this 6 blown-up points do not lye on a conic, and
 no 3 of them lye on the same line.
We further denote by $\widetilde{\C P}^2_6(\kappa)$
the blow up of $\C P^2$ 
in $6-2\kappa$ real points and $\kappa$ pairs of conjugated points,
such that this 6 blown-up points  lye on a smooth real
conic with a non-empty real part, but
 no 3 of them lye on the same line.
 We denote by $E$
the strict transform of this conic in $\widetilde{\C P}^2_6(\kappa)$. 
We also denote by $\C P^2_6(4)$ the real structure on 
the blow up $\C P^2$ 
in $6$  points with a disconnected real part (see \cite{Sil89,Kol97}).
We have
$$\chi(\R P^2_6(\kappa))=-5+2\kappa \quad \mbox{for all
}\kappa\in\{0,1,2,3,4\}. $$
Note that $\C P^2_6(\kappa)$ contains no complex algebraic curves $C$
with $C^2\le -2$, and 
that the curve $E$ and its multiples are the only
algebraic curves in $\widetilde{\C P}^2_6(\kappa)$ with
self-intersection strictly less that $-1$. Hence it follows from
\cite[Lemma 3.3.1]{McS} that $J_{st}$ is 
generic enough for our purposes, as long as we consider generic configuration of
points in $\widetilde{\C P}^2_6(\kappa)$.
Theorem \ref{mainthm}$(1)$ applied to $E$ in 
$X_1=\widetilde{\C P}^2_6(\kappa)$ 
allows one to compute
$W_{\C P^2_6(\kappa),L_1}(d,s)$ (when $\R X_0=S^1\times S^1$) and 
$W_{\C P^2_6(\kappa+1),L_2}(d,s)$  (when $\R X_0=S^2$) out of the sets 
$\mathcal C_{1,k}(d,\x,J_{st})$.
When $s=0$, it follows from {\cite[Theorem 3.12]{Bru14}} that there exists a
configuration of real points 
$\x$ in $\widetilde{\C P}^2_6(\kappa)$ 
such that for any $k\ge 0$, any curve in $\mathcal C_{1,k}(d,\x,J_{st})$
intersects $E$ only in real points (i.e. $b=0$), and 
$$\sum_{C_1\in\mathcal C_{1,k}(d,\underline x,J_{st})}
(-1)^{m_{L_1,\R X_1\setminus L_1}(C_1)}\ge 0. $$
Hence by  Theorem \ref{mainthm}$(1)$ we obtain
$$W_{\C P^2_6(\kappa),L_1}(d,0) - W_{\C P^2_6(\kappa+1),L_2}(d,0)\ =\ 
 \sum_{k\ge 1} {d\cdot [E] \choose k} \sum_{C_1\in\mathcal C_{1,k}(d,\underline x,J_{st})}(-1)^{m_{L_1,\R X_1\setminus
     L_1}(C_1)}\ \ge 0.$$

The proof in the case of $\C P^1\times \C P^1$ is analogous using
floor diagrams from \cite{Br6b}.
\end{proof}

Note that Proposition \ref{decr} does not generalize immediately to any
symplectic 4-manifold. Indeed, according to {\cite[Section 7.3]{ABL}} one has
$W_{\C P^2,\R P^2}(9, 12)< W_{\C P^2,\R P^2}(9, 13)$, i.e. Proposition \ref{decr} does not
hold in the case of $\C P^2$ blown up in 26 points.

\section{Real symplectic sums and enumeration of real curves}\label{sec:proofs}

This section is devoted to the proof of Theorems \ref{thm:W vanish}
and \ref{mainthm}. We start by performing some preliminary
computations in Section \ref{sec:tangency}. We recall the symplectic
sum construction in Section
\ref{sec:sympl sum}, as well as a basic application to  
complex enumerative problems.
We  prove Theorems \ref{thm:W vanish}
and \ref{mainthm} in Sections \ref{sec:proof mainthm} and
\ref{sec:proof vanish}, by adapting results from Section
\ref{sec:sympl sum} to the real setting.

\medskip
An isomorphism between two $J$-holomorphic maps $f_1:C_1\to X$  and
$f_2:C_2\to X$ is a biholomorphism  $\phi:C_1\to C_2$ such that 
$f_1=f_2\circ\phi$.
Maps are always considered up to isomorphisms. 

\medskip
Given $\alpha=(\alpha_i)_{i\ge 1}\in \Z_{\ge
  0}^\infty$, we use the following notation:
$$|\alpha|=\sum_{i=1}^{+\infty} \alpha_i,\quad 
\mbox{and}\quad I\alpha=\sum_{i=1}^{+\infty} i\alpha_i. $$
The vector in
$\Z_{\ge 0}^\infty$ whose all coordinates are equal to $0$, except the
$i$th one which is equal to 1, is denoted by $e_i$.

\subsection{Curves with tangency conditions}\label{sec:tangency}
Let $(X,\omega)$ be a compact and connected $4$-dimensional symplectic
manifold, and let $E\subset X$ be an embedded symplectic curve in $X$.
Let $d\in H_2(X;\Z)$ and 
$\alpha,\beta\in \Z_{\ge 0}^\infty$
such that
$$I\alpha + I\beta=d\cdot [E].$$
Choose a configuration $\x=\x^\circ\sqcup \x_E$ of points in $X$,
with $\x^\circ$ a configuration of $c_1(X)\cdot d  -1  - d\cdot [E]+
|\beta|$ points 
in $X\setminus E$, and $\x_E=\{p_{i,j}\}_{0< j\le \alpha_i, i\ge 1}$
a configuration of 
$|\alpha|$ points in $E$.
Given $J$ an almost complex structure on $X$ tamed by $\omega$ and for
which $E$ is $J$-holomorphic, 
we denote by $\CC^{\alpha,\beta}(d,\x,J)$  the set
 of rational $J$-holomorphic maps $f:\C P^1\to X$ such that 
\begin{itemize}
\item $f_*[\C P^1]=d$;
\item $\x\subset f(\C P^1)$;
\item $E$ does not contain $f(\C P^1)$;
\item $f(\C P^1)$ has order of contact $i$ with $E$ at each points
  $p_{i,j}$;
\item $f(\C P^1)$ has order of contact $i$ with $E$ at exactly $\beta_i$
  distinct points on $E\setminus \x_E$.
\end{itemize}
For a generic choice of $J$, 
the set of simple maps in
$\CC^{\alpha,\beta}(d,\x,J)$ 
is 0-dimensional.
However 
 $\CC^{\alpha,\beta}(d,\x,J)$ 
might contain
components of positive dimension corresponding to non-simple maps.

\begin{lemma}\label{cor:transverse finite}
Suppose that  $\beta=(d\cdot [E])$ and $\alpha=0$, or 
$\beta=(d\cdot [E]-1)$ and $\alpha=(1)$. Then   for a
generic choice of $J$, the set
$\CC^{\alpha,\beta}(d,\x,J)$ only  contains simple maps.
\end{lemma}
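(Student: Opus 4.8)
The plan is to rule out multiply covered maps by an incidence--dimension count, so that only simple maps can survive. Suppose $f\in\CC^{\alpha,\beta}(d,\x,J)$ is not simple. Then $f$ factors as $f=g\circ\phi$, where $\phi\colon\C P^1\to\C P^1$ is a holomorphic map of some degree $m\ge 2$ and $g\colon\C P^1\to X$ is a simple $J$-holomorphic map of some class $d'$ with $d=md'$. In particular $f(\C P^1)=g(\C P^1)$ is an irreducible rational $J$-holomorphic curve of class $d'$; since the definition of $\CC^{\alpha,\beta}(d,\x,J)$ forbids $f(\C P^1)\subset E$, positivity of intersections gives $g(\C P^1)\ne E$ and $d'\cdot[E]\ge 0$.

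First I would count the incidence conditions, and this is where the hypothesis is used. In both listed cases every prescribed order of contact with $E$ equals $1$, so $|\alpha|+|\beta|=I\alpha+I\beta=d\cdot[E]$. Hence
$$|\x|=|\x^\circ|+|\x_E|=\bigl(c_1(X)\cdot d-1-d\cdot[E]+|\beta|\bigr)+|\alpha|=c_1(X)\cdot d-1 .$$
Since $\x\subset f(\C P^1)=g(\C P^1)$, the rational curve $g(\C P^1)$ of the smaller class $d'$ must pass through all these $c_1(X)\cdot d-1$ points.

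Next I would invoke transversality for simple maps. For a generic $J$ the space of simple rational $J$-holomorphic curves of class $d'$ is a manifold of dimension $c_1(X)\cdot d'-1$ (see \cite[Chapter 3]{McS}). If $c_1(X)\cdot d'\le 0$ this space is already empty, so no such $g$ exists. If $c_1(X)\cdot d'>0$, then imposing passage through the $c_1(X)\cdot d-1=m\,c_1(X)\cdot d'-1$ points of $\x$ cuts the dimension down to
$$\bigl(c_1(X)\cdot d'-1\bigr)-\bigl(c_1(X)\cdot d-1\bigr)=-(m-1)\,c_1(X)\cdot d'<0,$$
so again, for generic $J$ (and generic $\x$), no such $g$ exists. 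In either case we reach a contradiction, and therefore $\CC^{\alpha,\beta}(d,\x,J)$ contains only simple maps.

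I expect the main obstacle to be the transversality input guaranteeing that each of the $c_1(X)\cdot d-1$ incidence conditions genuinely imposes complex codimension one on the moduli space of curves of class $d'$, including the at most one point of $\x$ that lies on $E$ in the case $\alpha=(1)$. This is exactly where $d'\cdot[E]\ge 0$ and $g(\C P^1)\ne E$ are used: a generic such curve meets $E$ transversally, so requiring it to contain a generic point of $E$ remains a codimension-one condition, and the count above then goes through after choosing $\x$ and $J$ generically.
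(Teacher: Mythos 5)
Your proposal is correct and is essentially the paper's own argument: both proofs note that in the two listed cases the total number of point constraints is $c_1(X)\cdot d-1=\delta\, c_1(X)\cdot d_0-1$, and that the underlying simple curve $f_0$ of class $d_0=d/\delta$ would then have to pass through more generic points than the dimension $c_1(X)\cdot d_0-1$ of its moduli space permits, which is impossible for $\delta\ge 2$. Your extra remark about the point of $\x_E$ on $E$ still cutting codimension one is a sound clarification of a step the paper leaves implicit, not a different route.
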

\begin{proof}
 Suppose on the
contrary that $\CC^{\alpha,\beta}(d,\x,J)$ contains a
non-simple map 
 which factors through a non-trivial ramified covering of degree
$\delta$ of a simple
map $f_0:\C 
P^1\to X$. Let $d_0$ denotes the homology class 
$(f_0)_*[\C P^1]$. Since $f_0(\C P^1)$ passes through $\delta
c_1(X)\cdot d_0-1$ points, we have
$$c_1(X)\cdot d_0-1\ge \delta c_1(X)\cdot d_0-1\ge 0,$$
which is impossible.
 \end{proof}

Next proposition shows
that
the set of images of non-simple maps in $\CC^{\alpha,\beta}(d,\x,J)$
is 0-dimensional.

\begin{prop}\label{prop:finite covering}
Suppose that  $\CC^{\alpha,\beta}(d,\x,J)$ contains a
non-simple map 
$f$ which factors through a non-trivial ramified covering of a simple
map $f_0:\C 
P^1\to X$. Denote by $d_0$ the homology class 
$(f_0)_*[\C P^1]$,
 and let $\alpha',\beta'\in \Z_{\ge 0}^\infty$ such that
$f_0 \in \CC^{\alpha',\beta'}(d_0,\x,J)$. 
Then for a generic choice of $J$, we have
$$c_1(X)\cdot d_0  -1  - d_0\cdot [E]+ |\beta'|=|\x^\circ|=k_1\quad \mbox{and}\quad
|\alpha'|=k_2$$
 with 
$(k_1,k_2)=(1,0), (0,1),$ or $(0,0)$.
Moreover in the first two cases,
 the set 
of such ramified coverings $f$ 
is finite,
 and
$|\alpha'|+|\beta'|\ge 2$.
\end{prop}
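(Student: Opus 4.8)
The plan is to study the non-simple map $f$ through its factorization $f=f_0\circ\phi$, where $\phi\colon\C P^1\to\C P^1$ is a ramified covering of degree $\delta\ge 2$ and $f_0$ is simple; thus $d=\delta d_0$ and $f(\C P^1)=f_0(\C P^1)$. Because the image of $f_0$ coincides with that of $f$, it contains the whole configuration $\x$: in particular $f_0$ passes through all $|\x^\circ|$ points of $\x$ lying off $E$ and through every point of $\x\cap E$, which forces $|\alpha'|=|\x\cap E|=|\alpha|$. For generic $J$ the relative moduli space of simple rational curves in class $d_0$ with contact profile $(\alpha',\beta')$ to $E$ (the contacts recorded by $\alpha'$ being at the fixed points $\x\cap E$) is transverse of the expected complex dimension $c_1(X)\cdot d_0-1-d_0\cdot[E]+|\beta'|$, by the same argument used for $\CC^{\alpha,\beta}(d,\x,J)$ in Lemma \ref{cor:transverse finite}. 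Imposing passage through the $|\x^\circ|$ generic points off $E$ leaves the expected dimension $c_1(X)\cdot d_0-1-d_0\cdot[E]+|\beta'|-|\x^\circ|$, and since for generic data the images form a finite set this must vanish; equivalently this records that $f_0\in\CC^{\alpha',\beta'}(d_0,\x,J)$, giving the first displayed equality. I then set $k_1=|\x^\circ|$ and $k_2=|\alpha'|$.

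The heart of the argument is a Riemann--Hurwitz count for $\phi$ which converts $\delta\ge 2$ into the bound $k_1+k_2\le 1$. Let $c_1,\dots,c_\ell$ be the contact orders of $f_0$ with $E$ at its $\ell=|\alpha'|+|\beta'|$ distinct intersection points, so $\sum_i c_i=d_0\cdot[E]$. Over the $i$-th such point $\phi$ has $m_i$ preimages whose ramification indices sum to $\delta$, and the contact orders of $f$ with $E$ at these preimages are the corresponding multiples of $c_i$; hence the number of intersection points of $f$ with $E$ is $\sum_i m_i=|\alpha|+|\beta|$, while the ramification of $\phi$ concentrated over the intersection points of $f_0$ with $E$ is $\sum_i(\delta-m_i)=\delta\ell-(|\alpha|+|\beta|)$. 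As the total ramification of $\phi$ equals $2\delta-2$, this gives
$$\delta\bigl(|\alpha'|+|\beta'|\bigr)-\bigl(|\alpha|+|\beta|\bigr)\le 2\delta-2.$$
Substituting $|\alpha|=|\alpha'|=k_2$ together with the two expressions $|\x^\circ|=c_1(X)\cdot d_0-1-d_0\cdot[E]+|\beta'|$ and $|\x^\circ|=c_1(X)\cdot d-1-d\cdot[E]+|\beta|$ for $k_1$, a direct computation using $c_1(X)\cdot d=\delta\,c_1(X)\cdot d_0$ and $d\cdot[E]=\delta\,d_0\cdot[E]$ yields $\delta|\beta'|-|\beta|=(\delta-1)(k_1+1)$, so the inequality becomes $(\delta-1)(k_1+k_2+1)\le 2(\delta-1)$. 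Dividing by $\delta-1>0$ gives $k_1+k_2\le 1$, which is precisely the list of admissible pairs $(1,0)$, $(0,1)$, $(0,0)$.

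It remains to treat the two boundary cases $k_1+k_2=1$. Here the Riemann--Hurwitz inequality above is an equality, so all ramification of $\phi$ lies over the intersection points of $f_0$ with $E$; in other words $\phi$ has no free branch point. Since moreover $f_0$ itself is rigid (its images are finite by the first equation), $\phi$ is then determined up to finitely many choices by its fixed branch locus and its monodromy, so the set of such coverings $f$ is finite. Finally the total ramification $2\delta-2=2(\delta-1)$ is spread over the $\ell$ intersection points, each of which absorbs at most $\delta-1$, whence $\ell(\delta-1)\ge 2(\delta-1)$ and therefore $|\alpha'|+|\beta'|=\ell\ge 2$. The delicate points that I expect to require the most care are the two ingredients of the first paragraph: establishing transversality of the relative moduli space of $f_0$ of the expected dimension for generic $J$, and the precise matching of the two contact profiles $(\alpha,\beta)$ and $(\alpha',\beta')$ through the ramification data of $\phi$; once these are in place, the Riemann--Hurwitz bookkeeping carrying the remaining steps is routine.
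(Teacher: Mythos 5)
Your overall route is the same as the paper's: a Riemann--Hurwitz count for the covering $\phi$ played against the number of point constraints, then an equality-case analysis. Your contact-order bookkeeping (intersection points of $f$ with $E$ are the $\sum_i m_i$ preimages of $f_0^{-1}(E)$, with ramification $\sum_i(\delta-m_i)\le 2\delta-2$), your finiteness argument (equality in Riemann--Hurwitz forces all branching of $\phi$ over $f_0^{-1}(E)$, and a degree-$\delta$ covering of $\C P^1$ with prescribed branch locus has finitely many isomorphism classes), and your bound $\ell(\delta-1)\ge 2\delta-2$ giving $|\alpha'|+|\beta'|\ge 2$ are all correct and agree with the paper's (terser) proof of these points.

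There is, however, one genuine flaw: in your first paragraph you assert the equality $c_1(X)\cdot d_0-1-d_0\cdot[E]+|\beta'|=|\x^\circ|$ on the grounds that ``for generic data the images form a finite set, so the expected dimension must vanish.'' This is circular: finiteness of the set of images is precisely Proposition \ref{prop:finite}, which is proved \emph{afterwards} using the present proposition, and at this stage nothing rules out that the simple maps $f_0$ with profile $(\alpha',\beta')$ through the $|\x^\circ|$ generic points form a positive-dimensional family. A priori, genericity only gives the inequality $D:=c_1(X)\cdot d_0-1-d_0\cdot[E]+|\beta'|\ \ge\ |\x^\circ|$ (otherwise the space would be empty), and since your second paragraph substitutes the equality into the computation, your derivation of $k_1+k_2\le 1$ rests on an unestablished step. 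The repair is exactly the paper's argument and fits inside your own computation: running your chain with ``$\ge$'' in place of ``$=$'' gives $\delta|\beta'|-|\beta|\ge(\delta-1)(D+1)$, whence $(\delta-1)\bigl(D+|\alpha'|+1\bigr)\le 2(\delta-1)$ and so $D+|\alpha'|\le 1$; when $D+|\alpha'|=1$ this forces every inequality in the chain to be an equality, which yields $D=|\x^\circ|$ \emph{a posteriori}, while when $D=0$ the equality holds trivially from $0\le|\x^\circ|\le D$. In other words, the first displayed equality of the statement is an output of the Riemann--Hurwitz estimate, not an admissible input to it.
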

\begin{proof}
Let $\delta\ge 2$ be the degree of the covering map through which $f$
factors. In particular we have $d=\delta d_0$.
By Riemann-Hurwitz Formula, we have 
$$\delta (|\alpha'| + |\beta'|)-|\alpha|-|\beta|\le 2\delta-2.$$
Combining the latter identity with $|\alpha'|=|\alpha|$, we get
\begin{equation}\label{eq:RH}
|\beta|-|\beta'|\ge (\delta-1)(|\alpha'|+ |\beta'|-2).
\end{equation}
Since $f_0(\C P^1)$ contains all points in $\x^\circ$, we have
$$c_1(X)\cdot d_0  -1  - d_0\cdot [E]+
|\beta'|\ge |\x^\circ|= \delta c_1(X)\cdot d_0  -1  - \delta d_0\cdot [E]+
|\beta|,$$
and so
$$(\delta-1)(d_0\cdot [E] - c_1(X)\cdot d_0)\ge  |\beta|-|\beta'|.$$
Combining this identity with  $(\ref{eq:RH})$, we obtain
$$0\ge (\delta-1)( c_1(X)\cdot d_0- d_0\cdot [E] + |\alpha'|+ |\beta'|-2). $$
Since we have
$$\delta\ge 2, \quad c_1(X)\cdot d_0- d_0\cdot [E] +  |\beta'|-1\ge
0, \quad \mbox{and}\quad |\alpha'|\ge 0, $$
 we deduce that 
$$c_1(X)\cdot d_0  -1  - d_0\cdot [E]+ |\beta'|=k_1\quad \mbox{and}\quad
|\alpha'|=k_2$$ 
with 
$(k_1,k_2)=(1,0), (0,1),$ or $(0,0)$.
Moreover in the first two cases, all inequalities above are in fact
equalities. In particular there exists finitely many coverings 
$\pi:\C P^1\to \C P^1$ 
of
degree $\delta$ such that $f_0\circ \pi\in
\CC^{\alpha,\beta}(d,\x,J)$.
 Since $|\beta|-|\beta'|\ge 0$, we also
deduce from $(\ref{eq:RH})$ that $|\alpha'|+|\beta'|\ge 2$.
\end{proof}

\begin{rem}
The three cases from Proposition \ref{prop:finite covering} show up,
even in simple situations. Let us consider for example $X$ to be $\C
P^2$ blown up at a point $q$.  Denote by $l$ the homology class of a
line, by $l_{exc}$ the class of the
exceptional divisor,  and by $E$ the pull back of
 a conic not passing through $q$.
Then for any choice of $J$, the sets
$\CC^{0,2e_\delta}(\delta(l-l_{exc}),\{p\},J)$,
$\CC^{e_\delta,e_\delta}(\delta(l-l_{exc}),\emptyset,J)$, and
$\CC^{0,e_{2\delta}}(\delta(l-l_{exc}),\emptyset,J)$ with $\delta\ge 2$ contain a
non-trivial ramified covering of a line,  the third set being of
dimension $\delta-1$. 
\end{rem}

\begin{prop}\label{prop:finite}
Suppose that $E$ is an embedded symplectic sphere with $[E]^2\ge -2$,
and that $|\beta| \ge [E]\cdot d -1$.
Then for a generic choice of $J$, the set $\CC^{\alpha,\beta}(d,\x,J)$ contains finitely many
simple maps. As a consequence, the set
$$\CC_{*}^{\alpha,\beta}(d,\x,J)=\left\{f(\C P^1) \ | \ (f:\C P^1\to X)\in
\CC^{\alpha,\beta}(d,\x,J)\right\}$$  
is also finite.
\end{prop}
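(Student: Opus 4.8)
The plan is to combine a dimension count for simple maps with a Gromov compactness argument, and then to treat non-simple maps through Proposition \ref{prop:finite covering}. I would first compute the expected dimension of the locus of simple maps in $\CC^{\alpha,\beta}(d,\x,J)$. In a symplectic $4$-manifold the space of unparametrized simple rational $J$-holomorphic curves in class $d$ has dimension $2(c_1(X)\cdot d-1)$. Each of the $|\x^\circ|=c_1(X)\cdot d-1-d\cdot[E]+|\beta|$ points of $\x^\circ$ imposes real codimension $2$; prescribing order of contact $i$ with $E$ at the point $p_{i,j}$ imposes real codimension $2i$; and prescribing order of contact $i$ at a free point of $E$ imposes real codimension $2(i-1)$. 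Summing these contributions gives the expected dimension
$$2(c_1(X)\cdot d-1)-2|\x^\circ|-2I\alpha-2(I\beta-|\beta|)=2\big(d\cdot[E]-I\alpha-I\beta\big)=0,$$
the last equality being the standing hypothesis $I\alpha+I\beta=d\cdot[E]$. A Sard--Smale argument on the universal moduli space, in the version incorporating incidence and tangency constraints (as in \cite{McS} and its refinements for contact conditions), then shows that for generic $J$ the simple locus is a smooth $0$-dimensional manifold.

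I would next upgrade this to finiteness by Gromov compactness. An infinite sequence of pairwise distinct simple curves in $\CC^{\alpha,\beta}(d,\x,J)$ would have a subsequence converging to a genus-$0$ stable map that still passes through $\x$, realizes $d$, and meets $E$ with total multiplicity $d\cdot[E]$. If the limit were itself a simple element of $\CC^{\alpha,\beta}(d,\x,J)$ it would lie in the $0$-dimensional manifold above, forcing the sequence to be eventually constant, a contradiction; hence it suffices to rule out degenerate limits, that is, stable maps acquiring a node, having a component mapped into $E$, or containing a multiply covered component. For generic $J$ each such stratum is cut out by constraints of strictly negative expected dimension and is therefore empty. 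Here the hypotheses are decisive: the inequality $|\beta|\ge d\cdot[E]-1$ forces $\alpha$ and $\beta$ to encode an essentially transverse intersection with $E$, with at most one point of order $2$ and at most one prescribed point, so that the incidence and contact conditions cannot be split among two or more components; and $[E]^2\ge-2$, via adjunction for the sphere $E$, controls any limit component that covers or is tangent to $E$. I expect the exclusion of these $E$-related degenerate components, in the presence of the tangency data, to be the main obstacle.

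Finally I would deduce that the image set $\CC_*^{\alpha,\beta}(d,\x,J)$ is finite. A simple map carries no nontrivial automorphism and is determined by its image, so the images of the simple maps are in bijection with the finite simple locus. The image of a non-simple map coincides with the image of the simple map $f_0$ it covers, and Proposition \ref{prop:finite covering} shows that for generic $J$ such $f_0$, together with their admissible coverings, form a finite set in the relevant cases (compare Lemma \ref{cor:transverse finite}). Combining these two sources of images shows that $\CC_*^{\alpha,\beta}(d,\x,J)$ is finite.
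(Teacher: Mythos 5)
Your dimension count and your reduction of the finiteness of $\CC_{*}^{\alpha,\beta}(d,\x,J)$ to Proposition \ref{prop:finite covering} both agree with the paper, but the central step --- ruling out degenerate Gromov limits --- has a genuine gap. You claim that for generic $J$ every degenerate stratum (limits with a node, a component mapped into $E$, or a multiply covered component) ``is cut out by constraints of strictly negative expected dimension and is therefore empty.'' That mechanism cannot work here, for two reasons. First, $J$ is not free: it is constrained so that $E$ remains $J$-holomorphic, hence components mapped into $E$ (and multiple covers of $E$, for which $c_1(X)\cdot [E]=[E]^2+2$ may vanish when $[E]^2=-2$) exist for \emph{every} admissible $J$; no genericity argument makes those strata empty. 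Second, multiply covered components cannot be excluded generically even away from $E$: the paper's own remark after Proposition \ref{prop:finite covering} exhibits, for every $J$, nontrivial ramified covers in sets such as $\CC^{0,e_{2\delta}}(\delta(l-l_{exc}),\emptyset,J)$, one of which is even positive-dimensional. So several of the boundary strata you need to kill are genuinely nonempty.

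What the paper does instead is a case analysis. Writing the limit as components $\overline C_1,\ldots,\overline C_m$ not contained in $E$ together with components covering $E$ with total multiplicity $k$, the point constraints combined with adjunction yield $0\ge -d\cdot[E]+|\beta|+m-1+2k+k[E]^2$, and the hypothesis $|\beta|\ge d\cdot[E]-1$ leaves exactly seven configurations. Only those with $k=0$ and no covering are dispatched by transversality arguments of the kind you propose (e.g.\ a curve pinned by $c_1(X)\cdot d-2$ points meets $E$ transversely at non-prescribed points, contradicting $\alpha\ne 0$ or $\beta_2\ne 0$). The remaining cases --- $k>0$ with $[E]^2=-2$, and the multiple-cover cases --- concern nonempty strata, and the contradiction is not emptiness but that such a configuration cannot absorb the limit: since $(d-k[E])\cdot[E]=d\cdot[E]+2k$, smoothing would force at least $d\cdot[E]+k$ intersection points of $f_n(\C P^1)$ with $E$, violating positivity of intersections; in the covering cases, the adjunction formula counts the nodes of the underlying simple curve $f_0$, and their smoothings force $f_n(\C P^1)\cdot f_0(\C P^1)$ to exceed $d\cdot d_0$. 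You correctly flagged the $E$-related degenerations as ``the main obstacle,'' but the positivity-of-intersection and adjunction arguments that actually resolve them are of a different nature from the Sard--Smale emptiness you invoke, and without them the compactness step does not close.
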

\begin{proof}
Suppose that $\CC^{\alpha,\beta}(d,\x,J)$ contains infinitely many
simple maps. By Gromov compactness Theorem, there exists a sequence
$(f_n)_{n\ge 0}$ of simple maps in $\CC^{\alpha,\beta}(d,\x,J)$ which
converges to some $J$-holomorphic map $\overline f:\overline C \to
X$. By  genericity of $J$, 
the set of simple maps in  $\CC^{\alpha,\beta}(d,\x,J)$ is discrete. Hence
 either $\overline C$ is reducible, or $\overline f$ is non-simple.
Let 
$\overline C_1,\ldots , \overline C_m,\overline C'_1,\ldots,\overline C'_{m'}$ 
be the irreducible components of $\overline C$, labeled in such a way that
\begin{itemize}
\item  $\overline  f(\overline C_i)\not\subset E$;
\item $\overline f(\overline C'_i)\subset E$, and $\overline
  f_*[\overline C'_i]= k_i[E]$.
\end{itemize}
Define $k=\sum_{i=1}^{m'}k_i$.
The 
restriction of $\overline f$ to
$\bigcup_{i=1}^m\overline C_i$ is subject to $c_1(X)\cdot d -1 -d\cdot
[E] +|\beta|$ points conditions, so we have
$$c_1(X)\cdot (d-k[E]) -m\ge c_1(X)\cdot d-1  -d\cdot [E] +|\beta|.$$
Since $E$ is an embedded sphere, the adjunction formula implies  that 
$c_1(X)\cdot [E]=[E]^2+2$. Hence we get
$$c_1(X)\cdot d- 2k -k[E]^2 -m\ge c_1(X)\cdot d-1  -d\cdot [E]
+|\beta|,$$
that is
$$0\ge -d\cdot [E]+|\beta|+m-1+2k +k[E]^2. $$
Since  $d\cdot [E]\ge|\beta|$, we are in one of the following situations:
\begin{enumerate}
\item $d\cdot [E]=|\beta|$ (in particular $\alpha=0$):
\begin{enumerate}
\item  $k=0$, and $m=1$;
\item $[E]^2=-2$, $k>0$, and $m=1$;
\end{enumerate}
\medskip

\item $d\cdot [E]=|\beta|+1$ (in particular either 
$\beta=(d\cdot [E]-1)$ and $\alpha=(1)$, or $\beta=(d\cdot [E]-2,1)$):
\begin{enumerate}
\item $k=0$, and $m=1$;
\item $k=0$, and $m=2$;
\item $[E]^2=1$, $k=1$, and $m=1$;
\item $[E]^2=-2$, $k>0$, and $m=1$;
\item $[E]^2=-2$, $k>0$, and $m=2$.
\end{enumerate}
\end{enumerate}
We end the proof of the proposition by ruling out all these cases one by one.
\begin{itemize}
\item[(1)(a)] $d\cdot [E]=|\beta|$, $k=0$, and $m=1$:

As explained above, the map $\overline f$ has to factorize through 
 a non-trivial ramified covering of a simple
map $f_0:\C 
P^1\to X$. But then $f_0$ is subject to more point
constraints that the dimension of its space of deformation, which provides
a contradiction. 

\item[(1)(b)] $d\cdot [E]=|\beta|$, $[E]^2=-2$, $k>0$, and $m=1$:

By genericity, 
the curve $\overline f(\overline C_1)$ is fixed by the $c_1(X)\cdot
d-1$ point constraints and intersect $E$ transversely. 
Any intersection point of $\overline f(\overline
C_1\setminus (\overline C'_1\cup\ldots\cup \overline C'_{m'}))$ and
$E$ 
 deforms to
an intersection point of $f_n(\C P^1)$ and $E$ 
for $n>>1$.
Since
$(d-k[E])\cdot [E]=d\cdot [E] +2k$, 
at least $d\cdot [E] +k$ intersection points of $\overline f(\overline
C_1)$ and $E$ deform to an intersection point of $f_n(\C P^1)$ and $E$
for $n>>1$. But this contradicts the fact that two $J$-holomorphic
curves intersect positively.

\item[(2)(a)] $d\cdot [E]=|\beta|+1$, $k=0$, and $m=1$:

Since $\overline f$ is a non-simple map, it factorizes through 
 a non-trivial ramified covering of degree $\delta\ge 2$ of a simple
map $f_0:\C 
P^1\to X$. If $d_0=(f_{0})_*[\C P^1]$,
the adjunction formula implies that
 the image of $f_0$ has 
$$\frac{d_0^2 -c_1(X)\cdot d_0 +2}{2} $$
nodes. Each of this node deforms to $2\delta$
 intersection point of $f_n(\C P^1)$ and  $ f_0(\C P^1)$
for $n>>1$. Since $\x\subset f_n(\C P^1)\cap f_0(\C P^1)$, we get
$$d\cdot d_0 \ge 2\delta\ \frac{d_0^2 -c_1(X)\cdot d_0 +2}{2} 
+ c_1(X)\cdot d -2 = d\cdot d_0 +2(\delta-1)> d\cdot d_0$$
which is a contradiction.

\item[(2)(b)] $d\cdot [E]=|\beta|+1$,  $k=0$, and $m=2$:

By genericity, 
the curve $\overline f(\overline C_1\cup\overline C_2)$
 is fixed by the $c_1(X)\cdot
d-2$ point constraints, and intersect $E$ transversely at
non-prescribed points. This contradicts the fact that either
$\alpha\ne 0$ or 
$\beta_2\ne 0$.

\item[(2)(c)] $d\cdot [E]=|\beta|+1$,  $[E]^2=1$, $k=1$, and $m=1$:

By genericity, 
the curve $\overline f(\overline C_1)$ is fixed by the $c_1(X)\cdot
d-2$ point constraints, and intersect $E$ transversely in
$d\cdot [E]+1$ non-prescribed  points. 
Any such intersection point distinct from   
$\overline f(\overline C_1\cap\overline C'_1)$ deforms to an
intersection
point of $f_n(\C P^1)$ and $E$ for $n>>1$.
Moreover since all intersection points of $\overline f(\overline C_1)$ and
$E$ are transverse and non-prescribed, 
the component $\overline C'_1$ contains the limit of
the point corresponding to the extra constraint $\alpha_1$ or $\beta_2$.
Therefore $f_n(\C P^1)$ and $E$ for $n>>1$ must have at least $d\cdot
[E]+1$ intersection points for $n>>1$,
 which is a contradiction.

\item[(2)(d)] $d\cdot [E]=|\beta|+1$,  $[E]^2=-2$, $k>0$, and $m=1$:

Suppose first that $\overline f_{|\overline C_1}$  factorizes through  
 a non-trivial ramified covering  of degree $\delta\ge 2$ of a simple
map $f_0:\C 
P^1\to X$. Since $f_0(\C P^1)$ satisfies
$c_1(X)\cdot d -2$ point conditions, we have that
$$c_1(X)\cdot d_0 -1 \ge c_1(X)\cdot d -2 = \delta c_1(X)\cdot d_0
-2\ge 0,  $$
where $d_0=(f_{0})_*[\C P^1]$.
Hence we obtain  that $c_1(X)\cdot d_0=1$ and $\delta=2$. In
particular the curve $f_0(\C P^1)$ is rigid, and intersect $E$ at smooth
points. Now the same arguments used in the case (2)(a) provide a
contradiction.

Hence $\overline f_{|\overline C_1}$ is a simple map. 
Since $\overline f_{|\overline C_1}$ satisfies $c_1(X)\cdot d-2$ point constraints,
it has at most one tangency point with $E$. The same argument used in
the case (1)(b) implies that $k=1$ and $\overline f(\overline C_1)$ is
tangent to $E$ at $\overline f(\overline C_1\cap \overline
C'_1)$. Hence  $\overline f_{|\overline C_1}$ is fixed by this
tangency condition and the $c_1(X)\cdot d -2$ other point
conditions, and the component $\overline C'_1$ contains the limit of
the point corresponding to the extra constraint $\alpha_1$ or $\beta_2$. Thus
we obtain again a contradiction with the positivity of intersection
points of $E$ and $f_n(\C P^1)$ for $n>>1$.

\item[(2)(e)] $d\cdot [E]=|\beta|+1$,  $[E]^2=-2$, $k>0$, and $m=2$:

By genericity, 
the curve $\overline f(\overline C_1\cup\overline C_2)$
 is fixed by the $c_1(X)\cdot
d-2$ point constraints, and intersect $E$ transversely at
non-prescribed points. Hence the same argument used in
the case (1)(b) implies that $k=1$. Thus the component $\overline C'_1$ contains the limit of
the point corresponding to the extra constraint $\alpha_1$ or
$\beta_2$, which gives a contradiction as in the case (2)(d).
\end{itemize}

The finiteness of the set
$\CC_{*}^{\alpha,\beta}(d,\x,J)$ follows from Proposition
\ref{prop:finite covering} and the finiteness of simple maps in
$\CC^{\alpha',\beta'}(d_0,\x,J)$ for all possible
$\alpha',\beta',$ and $d_0$ with $d=\delta d_0$. 
\end{proof}

 In the case when $X=\C P^1\times \C P^1$, $[E]=l_1+l_2$, and
 $|\x^\circ|\le 1$,  
the set $\CC^{\alpha,\beta}(d,\x,J)$ is always
finite and made of simple maps. 

\begin{prop}\label{prop:quadric}
Suppose that
 $X=\C P^1\times \C P^1$ and $[E]=l_1+l_2$. Then  the set
$\CC^{\alpha,\beta}(d,\x,J)$ with $|\x^\circ|\le 1$ is empty for a
generic choice of $J$,
except in the following situations where it contains a unique
element:
\begin{itemize}
\item $\CC^{e_1,0}(l_i,\emptyset,J)$, $i=1,2$;
\item $\CC^{0,e_1}(l_i,\{p\},J)$, $i=1,2$;
\item $\CC^{2e_1,0}(l_1+l_2,\{p\},J)$;
\item $\CC^{e_2,0}(l_1+l_2,\{p\},J)$.
\end{itemize}
Moreover, this unique element is an embedding.
\end{prop}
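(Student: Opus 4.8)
The plan is to cut the problem down to a short finite list of admissible triples $(d,\alpha,\beta)$ by a dimension count, then to discard the classes $2l_1$ and $2l_2$, and finally to settle existence, uniqueness and embeddedness in the four surviving cases. First I would exploit that on $X=\C P^1\times\C P^1$ with $[E]=l_1+l_2$ one has $c_1(X)\cdot d=2\,(d\cdot[E])$, so that the number of points in $\x^\circ$ equals $d\cdot[E]-1+|\beta|$. Hence the hypothesis $|\x^\circ|\le 1$ is exactly the inequality $d\cdot[E]+|\beta|\le 2$. Writing $d=al_1+bl_2$, positivity of the $\omega$-area of a non-constant $J$-holomorphic sphere (the two rulings having equal area) gives $d\cdot[E]=a+b\ge 1$, while the $S^2$-fibration of $X$ by the embedded $l_1$- and $l_2$-spheres (the first lemma of this subsection) together with positivity of intersections forces $a=d\cdot l_2\ge 0$ and $b=d\cdot l_1\ge 0$. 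Combined with $I\alpha+I\beta=d\cdot[E]$, the only possibilities are $d\cdot[E]=1$ with $(\alpha,\beta)\in\{(e_1,0),(0,e_1)\}$, realized by $d=l_1$ or $l_2$, and $d\cdot[E]=2$ with $\beta=0$ and $\alpha\in\{2e_1,e_2\}$, realized by $d\in\{2l_1,\,l_1+l_2,\,2l_2\}$.

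Next I would eliminate the two classes $2l_1$ and $2l_2$. By the adjunction formula \cite{McS} an irreducible rational $J$-holomorphic curve in class $2l_i$ would have virtual genus $-1$, hence cannot be simple; by Proposition \ref{prop:finite covering} such a map is therefore a double cover of a simple curve in class $l_i$, that is, of a fiber, whose image meets $E$ in a single point. This contradicts $\alpha=2e_1$, which prescribes two distinct contact points on $E$; and for $\alpha=e_2$ it would force the prescribed tangency point to coincide with the intersection of $E$ and the unique fiber through the point of $\x^\circ$, which fails for a generic configuration. Hence $\CC^{\alpha,\beta}(2l_i,\x,J)=\emptyset$.

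It then remains to treat the four listed cases. For $d=l_i$ the unique element is the fiber through the prescribed point, which is embedded and transverse to $E$; this is precisely the content of the $S^2$-fibration statement, the relevant Gromov--Witten invariant being $1$. For $d=l_1+l_2$ the class is primitive, so every map is simple, and adjunction yields virtual genus $0$ with no singularities, hence an embedding; moreover the curve meets $E$ in total multiplicity $(l_1+l_2)^2=2$. Positivity of intersections then makes the contact conditions equivalent to plain incidence conditions, namely passing through the two points $q_1,q_2\in E$ when $\alpha=2e_1$, and being tangent to $E$ at a point $q\in E$ when $\alpha=e_2$; in both cases these have codimension equal to the expected dimension $c_1(X)\cdot d-1=3$. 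That the solution is unique, with no cancellation, would follow from automatic transversality for embedded rational curves in a $4$-manifold together with the relative count, which equals the number of $(1,1)$-curves through three generic points of the integrable quadric, namely $1$.

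The hard part will be this last step: getting the exact count of a single curve, rather than merely a finite signed count, in the tangency case $\alpha=e_2$ for a generic $J$ keeping $E$ holomorphic. I would secure it by combining automatic transversality, which makes the signed and unsigned counts agree, with the deformation invariance of the relative tangency Gromov--Witten number, evaluated on the standard integrable structure where $(1,1)$-curves form a $\P^3$.
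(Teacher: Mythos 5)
Your proposal is correct and follows the paper's overall architecture --- numerical reduction of $(d,\alpha,\beta)$, exclusion of $2l_1,2l_2$ via multiple covers, and verification in the integrable model followed by a deformation-invariance argument --- but it diverges from the paper in two sub-arguments. For excluding $2l_i$, the paper notes that any $f\in\CC^{\alpha,0}(2l_i,\{p\},J)$ must factor through the unique fiber $f_0\in\CC^{0,e_1}(l_i,\{p\},J)$ (since $2l_i\cdot l_i=0$ and both images contain $p$, positivity forces the images to coincide) and then derives a contradiction with the conclusion $|\alpha'|+|\beta'|\ge 2$ of Proposition \ref{prop:finite covering}; you instead rule out simple maps by adjunction (virtual genus $-1$) and contradict the contact data geometrically, correctly handling the branch-point subtlety in the $\alpha=e_2$ case (a branch point over the fiber's single intersection with $E$ does produce contact of order $2$, so genericity of $p_{2,1}$ is genuinely needed there) --- both routes are sound. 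For uniqueness, the paper has a much lighter argument that you should note: two distinct curves satisfying the same constraints would have intersection number exceeding that imposed by their homology classes (e.g.\ for $\alpha=e_2$, two curves tangent to $E$ at the same point are mutually tangent there, giving local multiplicity $\ge 2$, plus the point $p$ yields $\ge 3>(l_1+l_2)^2=2$), so uniqueness holds for \emph{every} generic $J$ without any transversality machinery; existence then follows from the integrable count $1$ and a cobordism argument, exactly as in your last paragraph. Your alternative via automatic transversality plus relative Gromov--Witten invariance does work, but it is heavier and you left the key verification implicit: for the constrained problem one needs $h^1$ of the twisted normal bundle to vanish, which here reads $N(-2q-p)\simeq\mathcal O_{\C P^1}(-1)$ since $N\simeq\mathcal O_{\C P^1}(2)$ for an embedded $(1,1)$-curve, so the Hofer--Lizan--Sikorav criterion does apply; you should also say a word about why no degenerate (broken or $E$-contained) configurations contribute to the relative count for generic $J$, a point the positivity-of-intersections argument sidesteps entirely.
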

\begin{proof}
The first Chern class of $X$ is dual to $2(l_1+l_2)$, so 
$$c_1(X)\cdot (al_1+bl_2)  -1  -  (al_1+bl_2)\cdot [E]+
|\beta| = a+b -1 +|\beta|.$$

Suppose that  $a+b -1 +|\beta|=0$ and that
$\CC^{\alpha,\beta}(al_1+bl_2,\emptyset,J)\ne \emptyset$. Since 
$(al_1+bl_2)\cdot [E]=a+b\ge |\beta|$, and since two
$J$-holomorphic curves 
intersect positively, we obtain  $a+b=1$ and $|\beta|=0$.
By genericity of $J$, we have that $(al_1+bl_2)^2\ge -1$, i.e. $2ab\ge -1$.
From $a+b=1$, we deduce that $a=0$ or $1$.

\medskip
In the case  $a+b -1 +|\beta|=1$ and 
$\CC^{\alpha,\beta}(al_1+bl_2,\{p\},J)\ne \emptyset$,
  we prove analogously that we are in one of the following situations:
\begin{itemize}
\item $(a,b)=(1,0)$ or $(0,1)$, and $|\beta|=1$;
\item  $(a,b)=(1,1),(2,0)$, or $(0,2)$, and $|\beta|=0$.
\end{itemize}

\medskip
If $X$ is equipped  with the  symplectic form $\omega_{FS}\oplus\omega_{FS}$
and  its standard complex structure $J_{st}$, 
it is easy to check that the sets
$\CC^{e_1,0}(l_i,\emptyset,J_{st})$, 
$\CC^{0,e_1}(l_i,\{p\},J_{st})$,
 $\CC^{2e_1,0}(l_1+l_2,\{p\},J_{st})$, and
 $\CC^{e_2,0}(l_1+l_2,\{p\},J_{st})$ consists of a unique element.
This implies that when we vary both $\omega$ and $J$, the
corresponding sets
still contain at least one element. Moreover they cannot contain more
than one element, since the imposed constraints imply that
 two distinct curves would
have an intersection number strictly bigger than the one imposed by
their homology class.
Finally,  all  $J$-holomorphic maps under consideration are
embeddings thanks to the adjunction formula.

\medskip
Suppose now that $\CC^{\alpha,0}(2l_i,\{p\},J)$ contains an element
$f:\C P^1\to X$. We  proved in the previous paragraph 
that there exists a map $f_0:\C P^1\to X$ in
$\CC^{0,e_1}(l_i,\{p\},J)$. Since we have $f_*[\C P^1]\cdot (f_0)_*[\C
  P^1]=2l_i^2=0$, we deduce that $f$ factors through $f_0$ and a degree
2 ramified 
covering of $\C P^1$. This contradicts Proposition
\ref{prop:finite covering}.
\end{proof}

\subsection{Symplectic sums}\label{sec:sympl sum}
Here we describe a
very particular case of
 the symplectic sum 
formula from
\cite{IP}.
Recall that $(X_1,\omega_1)$ is a compact and connected symplectic
manifold of dimension 4, containing an embedded symplectic sphere $E$
with $[E]^2=-2$. We furthermore assume the existence of a
symplectomorphism $\phi$ from $E$ to 
 a symplectic curve
realizing the class $l_1+l_2$ in 
$(X_0,\omega_0)=(\C P^1\times \C P^1,\omega_{FS}\oplus\omega_{FS})$. 
 By abuse, we still denote by $E$ the image $\phi(E)$ in $X_0$.
Since the self-intersection of $E$ in $X_0$ and $X_1$ are opposite, 
there exists a symplectic bundle isomorphism $\psi$
between the  normal bundle of $E$ in $X_0$ and  
the dual of the normal bundle of $E$ in $X_1$.
Out of these data, one produces a family of symplectic
$4$-manifolds $(Y_t,\omega_{t})$ parametrized by a small complex
number $t$ in $\C^*$, see \cite{Gom95}. All those manifolds are deformation
equivalent, and are called symplectic sums of $(X_0,\omega_0)$ and
$(X_1,\omega_1)$ along $E$.
Next theorem says that this family can be seen as a symplectic
deformation of the singular 
symplectic manifold $X_\sharp=X_0\cup_E X_1$ 
obtained by gluing $(X_0,\omega_0)$ and  $(X_1,\omega_1)$ along
$E$.

\begin{prop}[{\cite[Theorem 2.1]{IP}}]\label{thm:degen}
There exists a symplectic $6$-manifold $(Y, \omega_Y)$
and a symplectic fibration $\pi : Y\to D$ over a disk $D\subset\C$ such that
the central fiber $\pi^{-1}(0)$ is the singular symplectic
manifold $X_\sharp$, and $\pi^{-1}(t)=(Y_{t},\omega_{t})$
for $t\ne 0$.
\end{prop}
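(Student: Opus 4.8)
The plan is to build the total space $Y$ by combining the two summands with a universal local model for the node along $E$, following Gompf's gluing and its fibered refinement due to Ionel and Parker. First I would invoke the symplectic neighbourhood theorem to replace a tubular neighbourhood of $E$ in each $X_i$ by a disk-bundle neighbourhood of the zero section in its symplectic normal bundle. Write $\mathcal{L}$ for the normal bundle of $E$ in $X_0$; the hypothesis on $\psi$ then identifies the normal bundle of $E$ in $X_1$ with $\mathcal{L}^{*}$, consistently with the Euler numbers $+2$ and $-2$ (so that $\mathcal{L}=\mathcal{O}(2)$ and $\mathcal{L}^{*}=\mathcal{O}(-2)$ over $E=\C P^1$). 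Fixing a Hermitian metric and a compatible connection on $\mathcal{L}$ produces canonical symplectic forms on neighbourhoods of the zero sections, matching $\omega_0$ and $\omega_1$ up to symplectomorphism.

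The local model near $E$ is the total space of $\mathcal{L}\oplus\mathcal{L}^{*}\to E$, equipped with the natural symplectic form determined by the metric and connection, together with the evaluation pairing
$$\lambda:\mathcal{L}\oplus\mathcal{L}^{*}\longrightarrow\C,\qquad \lambda(u,v)=v(u).$$
The fibre $\lambda^{-1}(0)=(\mathcal{L}\oplus 0)\cup(0\oplus\mathcal{L}^{*})$ is the union of the two factors meeting transversally along $E$, which is exactly the local model of the node of $X_\sharp$; for $t\neq 0$ the fibre $\lambda^{-1}(t)$ is smooth and provides the ``neck'' replacing the node. I would then assemble $Y$ from three pieces: the products $(X_0\setminus E)\times D$ and $(X_1\setminus E)\times D$, each projecting to the $t$-disk $D$, and the region $\{\,|\lambda|\le\epsilon\,\}$ of the local model inside a fixed disk subbundle, with projection $\pi=\lambda$. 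Over the overlaps, which are annular neighbourhoods of the deleted zero sections, the neighbourhood identifications of the first step glue the products to the local model. The resulting $\pi:Y\to D$ restricts on the central fibre to $X_\sharp$, and on $\pi^{-1}(t)$ for $t\ne 0$ to the manifold obtained by gluing $X_0\setminus E$ and $X_1\setminus E$ along the neck $\lambda^{-1}(t)$, which is Gompf's symplectic sum $(Y_t,\omega_t)$ up to deformation.

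To produce $\omega_Y$ I would patch the product forms $\omega_i\oplus\omega_D$ with the natural form of the local model using a partition of unity in the overlap, and add a multiple of $\pi^{*}\omega_D$ to supply the missing base directions and to dominate the cross terms introduced by the cutoffs. The main obstacle is verifying nondegeneracy of the interpolated form throughout the neck: the symplectic area carried by $\lambda^{-1}(t)$ degenerates as $t\to 0$, and one must check that $\omega_Y$ stays nondegenerate both along the fibres and transverse to them, uniformly in $t$. This is precisely where the duality $\mathcal{L}^{*}\cong N_{X_1}E$ and the careful choice of Hermitian structure enter, ensuring that the two ends of the neck match symplectically. Granting this, closedness is automatic from the construction, each $\pi^{-1}(t)$ is a symplectic $4$-manifold, and $\pi^{-1}(0)=X_\sharp$, as required.
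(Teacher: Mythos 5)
The paper offers no proof of this proposition, citing it directly as \cite[Theorem 2.1]{IP}, and your sketch is precisely the Ionel--Parker construction from that source: the local model $\mathcal{L}\oplus\mathcal{L}^{*}$ with the evaluation pairing $\lambda$, gluing with the products $(X_i\setminus E)\times D$, and a Thurston-type patching of the symplectic forms. So your proposal is correct and takes essentially the same approach as the cited proof, including correctly flagging the one genuinely delicate point (uniform nondegeneracy of the glued form along the neck as $t\to 0$), which is where the technical work in \cite{IP} lies.
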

Topologically, $Y_t$ is obtained by removing a tubular
neighborhood of $E$ in $X_1$, and gluing back $X_0\setminus E$ via
$\psi$. Note that the symplectomorphism  $\phi$ induces a diffeomorphism $\Psi$ from
the normal bundle of $E$ in $X_1$ and $X_0\setminus E$.
Hence the homology groups $H_2(X_1;\Z)$ and 
 $H_2(Y_t;\Z)$ are identified, the class $[E]$ being identified with
the class $\Psi_*[E]$. 
Without loss of generality we may assume 
that $\Psi_*[E]=l_1-l_2$ in $H_2(X_0;\Z)$.

Let $d \in H_2(Y_{t};\Z)$, and 
choose  $\x(t)$ 
a  set of $c_1(X)\cdot d-1$ 
symplectic sections $ D \to  Y$ such that $\x(0)\cap
E=\emptyset$. Choose an almost complex structure
$J$ on $Y$ tamed by $\omega_Y$, which restrict to 
an almost complex structure
$J_t$ tamed by $\omega_t$ on each fiber 
$Y_t$,
and generic with respect to all choices
we made. 

Define $\CC(d,\x(0),J_0)$ to be
the set $\left\{\overline f:\overline C  \to X_\sharp \right\}$
of limits, as stable maps, 
 of
  maps in $\CC(d,\x(t),J_t)$ as $t$
  goes to $0$.
Recall (see {\cite[Section 3]{IP}}) that $\overline C$ is a connected nodal
rational curve  such that:
\begin{itemize}
\item $\x(0)\subset \overline f(\overline C)$;
\item any point $p\in \overline f^{\ -1}(E)$ is a node of 
$\overline C$ which is the intersection of two 
irreducible components $\overline C'$ and $\overline C''$ of
$\overline C$, with  $\overline f(\overline
C')\subset X_0$ and $\overline f(\overline
C'')\subset X_1$;

\item if in addition neither $\overline f(\overline
C')$ nor $\overline f(\overline
C'')$ is entirely mapped to $E$, then
the multiplicity of intersection
of both 
$\overline f(\overline
C')$ and $\overline f(\overline
C'')$ with $E$ are equal.
\end{itemize}

Given  an element $\overline f:\overline C\to X_\sharp$ 
of $\CC(d,\x(0),J_0)$, we denote by $C_i$  the union of the irreducible
components of $\overline C$ mapped to $X_i$.
\begin{lemma}\label{lem:homology}
Given an element $\overline f:\overline C\to X_\sharp$
of $\CC(d,\x(0),J_0)$,
there exists
$k\in\Z_{\ge 0}$ such that
$$
\overline f_*[C_1] = d -  k[E] \quad \textrm{and} \quad
\overline f_*[C_0]= kl_1 + (d\cdot [E] +k)l_2 .
$$
Moreover $c_1(X_1)\cdot \overline f_*[C_1]=c_1(Y_t)\cdot d$.
\end{lemma}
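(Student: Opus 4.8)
The plan is to deduce both homology identities from the single relation $[C_0]+[C_1]=d$ in $H_2(Y;\Z)$, and then read off the integrality, positivity and Chern-number statements. First I would justify that relation: letting $t\to 0$ along a path in $D$, the family $\{C_t\}$ sweeps out a $3$-chain in $Y$ cobounding $C_t$ and its limit $\overline C=C_0\cup C_1$, so $[C_0]+[C_1]=[C_t]=i_{Y_t*}(d)$, where $i_{Y_t}\colon Y_t\hookrightarrow Y$. Since $\pi\colon Y\to D$ retracts onto its central fibre, $H_2(Y;\Z)\cong H_2(X_\sharp;\Z)$, and Mayer–Vietoris for $X_\sharp=X_0\cup_E X_1$ (with $E\cong S^2$, so $H_1(E)=0$) gives $H_2(X_\sharp;\Z)\cong\big(H_2(X_0;\Z)\oplus H_2(X_1;\Z)\big)/\langle(l_1+l_2,-[E])\rangle$, where $i_{0*},i_{1*}$ denote the two structural maps; in particular $i_{0*}(l_1+l_2)=i_{1*}[E]$.

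The heart of the argument is to locate $i_{Y_t*}(d)$ in this quotient. Writing $\overline f_*[C_0]=al_1+bl_2$ and $d_1=\overline f_*[C_1]$, I would determine the composite $\Phi=i_{Y_t*}\circ(\text{identification }H_2(Y_t)\cong H_2(X_1))$. Two computations pin it down: a class of $X_1$ representable away from $E$ is carried unchanged by $i_{1*}$, whereas $[E]$ is carried to $\Psi_*[E]=l_1-l_2$ in the $X_0$-summand. Combined with the relation $i_{1*}[E]=i_{0*}(l_1+l_2)$, these force $\Phi(\beta)=i_{1*}(\beta)+(\beta\cdot[E])\,i_{0*}(l_2)$. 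Applying this to $\beta=d$ and equating $\Phi(d)=i_{0*}(al_1+bl_2)+i_{1*}(d_1)$, one rewrites the identity as $i_{1*}(d_1-d)=i_{0*}\big(-al_1+(d\cdot[E]-b)l_2\big)$. In the quotient this can hold only if both arguments are a common multiple $k$ of the defining relation, which yields simultaneously $d_1=d-k[E]$ and $al_1+bl_2=kl_1+(d\cdot[E]+k)l_2$.

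It remains to check $k\ge 0$ and the Chern identity. For positivity, $C_0$ is a $J_0$-holomorphic cycle in $X_0=\C P^1\times\C P^1$, so $k=a=(al_1+bl_2)\cdot l_2$ is the intersection number of $C_0$ with a generic $J_0$-holomorphic fibre in class $l_2$, whence $k\ge 0$ by positivity of intersections. For the final claim, adjunction on the embedded symplectic $(-2)$-sphere $E$ gives $c_1(X_1)\cdot[E]=[E]^2+2=0$, so $c_1(X_1)\cdot\overline f_*[C_1]=c_1(X_1)\cdot(d-k[E])=c_1(X_1)\cdot d$; and the identification $H_2(X_1)\cong H_2(Y_t)$ underlying the symplectic sum matches first Chern classes on $d$, the discrepancy being supported along $E$ where both pair to $0$ (indeed $c_1(Y_t)\cdot(l_1-l_2)=0$ as well, since $l_1-l_2$ is a $(-2)$-sphere in $Y_t$). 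Hence $c_1(X_1)\cdot d=c_1(Y_t)\cdot d$, as required.

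I expect the delicate point to be the identification of $\Phi$, in particular pinning down the $l_2$-correction and the resulting asymmetry between $l_1$ and $l_2$ in $\overline f_*[C_0]$. It is precisely here that the normalisation $\Psi_*[E]=l_1-l_2$ and the attendant choice of which ruling of $X_0$ degenerates get used, and one must argue that the inclusion $H_2(Y_t)\to H_2(X_\sharp)$ really is given by the stated formula rather than by the naive $i_{1*}$; the cleanest route is to track a cycle representing $d$ explicitly through the cut-and-paste description of $Y_t$ recorded before the lemma.
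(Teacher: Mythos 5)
Your argument is correct in substance and reaches all three conclusions, but by a genuinely different route from the paper. The paper's proof is a short intersection-theoretic computation: it writes $\overline f_*[C_0]=al_1+bl_2$ and \emph{defines} $k$ by $\overline f_*[C_1]=d-k[E]$ (that the defect is a multiple of $[E]$ is exactly what your Mayer--Vietoris computation justifies; the paper leaves it implicit). Pairing $d$ with the representative $\Psi_*[E]=l_1-l_2$ lying on the $X_0$-side gives $b-a=d\cdot[E]$, while matching the intersections with $E$ computed from the two sides of $X_\sharp$ gives $a+b=(d-k[E])\cdot[E]=d\cdot[E]+2k$, whence $a=k$ and $b=d\cdot[E]+k$. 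The Chern identity is then quoted from \cite[Lemma 2.2]{IP}, namely $c_1(Y_t)\cdot d=c_1(X_1)\cdot\overline f_*[C_1]+c_1(X_0)\cdot\overline f_*[C_0]-2\overline f_*[C_0]\cdot[E]$, whose last two terms cancel because $c_1(X_0)$ is dual to $2(l_1+l_2)$. Your version buys self-containedness: it derives both class identities simultaneously from the single relation $[C_0]+[C_1]=d$, makes explicit where the normalisation $\Psi_*[E]=l_1-l_2$ breaks the $l_1$/$l_2$ symmetry, proves $k\ge 0$ by positivity of intersections (a point the paper's proof does not address at all), and replaces the citation of \cite{IP} by a direct comparison of $c_1(Y_t)$ with $c_1(X_1)$; the cost is substantially more homological bookkeeping than the two intersection numbers the paper actually computes.

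One step --- the one you flagged yourself --- does need tightening. Every class you use to pin down $\Phi$, namely a class representable off $E$ or $[E]$ itself, has \emph{even} intersection with $[E]$; so these generate only a sublattice of finite index in $H_2(X_1;\Z)$ (index two as soon as some class, e.g.\ $d$, has odd intersection with $[E]$), and your two test computations do not force the formula for $\Phi$ by linearity alone. The gap closes in one line: the Mayer--Vietoris relation $(l_1+l_2,-[E])$ is primitive, so $H_2(X_\sharp;\Z)$ is torsion free, and two homomorphisms into a torsion-free group agreeing on a finite-index sublattice agree everywhere; alternatively, track a cycle through the cut-and-paste description as you propose. The same remark is needed to legitimise your Chern comparison, since $c_1(Y_t)$ and $c_1(X_1)$ are likewise only tested against that sublattice. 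Finally, your justification that $c_1(Y_t)\cdot(l_1-l_2)=0$ should not invoke adjunction for an abstract smooth $(-2)$-sphere (adjunction in that form holds for symplectic, not merely smooth, spheres): the correct elementary reason is that this class is represented by the Lagrangian sphere $S_V$, and $TY_t|_{S_V}\cong TS_V\otimes_\R\C$ forces $c_1(Y_t)\cdot[S_V]=0$.
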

\begin{proof}
Let $k$ such that $f_*[C_1] = d -  k[E]$, and let 
$f_*[C_0]= al_1 +bl_2$.
Using the above identification of $H_2(X_1;\Z)$ and 
 $H_2(Y_t;\Z)$,
we have 
$$d\cdot [E]= \overline  f_*[C_0]\cdot (l_1-l_2)=b-a.$$ 
On the other hand, by considering a representative of $E$ in $X_1$ and
another in $X_0$, we obtain
$$a+b=\overline f_*[C_0]\cdot (l_1+l_2)= (d -  k[E])\cdot [E]= d\cdot [E]+2k, $$
which gives $a=k$ and $b=d\cdot [E] +k$.

By \cite[Lemma 2.2]{IP}, we have
$$c_1(Y_t)\cdot d = c_1(X_1)\cdot \overline f_*[C_1] +  c_1(X_0)\cdot
\overline f_*[C_0] -  2\overline f_*[C_0]\cdot [E]. $$
Since $c_1(X_0)$ is dual to $2(l_1+l_2)$, we deduce that $c_1(X_1)\cdot \overline f_*[C_1]=c_1(Y_t)\cdot d$.
\end{proof}

\begin{prop}\label{prop:degeneration}
Assume that the set $\x(0)\cap X_0$ contains at most one point, and
that $\x(0)\cap X_1\ne\emptyset$ if $\x(0)\cap X_0\ne\emptyset$.
Then for a generic   $J_0$,  the set
$\CC(d,\x(0),J_0)$ is finite, and only depends on $\x(0)$ and
$J_{0}$.
 Given
$\overline f:\overline C\to  X_\sharp$
  an element of  $\CC(d,\x(0),J_0)$, the
restriction of $\overline f$ to any component of $\overline C$ is a
simple map, and
no irreducible component of $\overline C$ is
  entirely mapped to $E$. Moreover the following are true.
\begin{enumerate}
\item If $\x(0)\cap X_0=\emptyset$, then 
 the curve $C_1$ is irreducible, 
and the image of any irreducible component of $C_0$ realizes
a class $l_i$.
The map $\overline f$ is the limit of a unique element of
$\CC(d,\x(t),J_t)$ as $t$ goes to 0.

\medskip
\item If $\x(0)\cap X_0=\{p_0\}$,
then
the image of the irreducible component $\overline C'$ of $C_0$
 whose
image contains $p_0$ realizes
either a class $l_i$ or the class $l_1+l_2$, while any other irreducible component of $C_0$ realizes a class $l_i$.

\begin{enumerate}
\item If $\overline f(\overline C')$ realizes the class  $l_i$, then the curve $C_1$ is irreducible and
  $\overline f_{|C_1}$ is an element of
$\CC^{e_1, (d\cdot [E]+2k-1)e_1}(d -  k[E],\x(0)\cup \x_E, J_0)$, where
  $\x_E=\overline f(\overline C')\cap E$. The map $\overline f$ is the limit of a unique element of
$\CC(d,\x(t),J_t)$ as $t$ goes to 0.

\item If $\overline f(\overline C')$ realizes the class   $l_1+l_2$, then  $\overline f_{|\overline C'}$ 
is an element of
$\CC^{\alpha, 0}(l_1+l_2,\{p_0\}\cup \x_E, J_0)$,
where
  $\x_E\subset \overline f( C_1)\cap E$, and $\alpha=2e_1$ or
$\alpha=e_2$. 
In the former case, the curve $C_1$ has two irreducible components, 
and $\overline f$ is the limit of a unique element of
$\CC(d,\x(t),J_t)$ as $t$ goes to 0; in the latter case,
the curve $C_1$ is irreducible, and $\overline f$ is the limit of
exactly two elements of
$\CC(d,\x(t),J_t)$ as $t$ goes to 0.
\end{enumerate}

\end{enumerate}
\end{prop}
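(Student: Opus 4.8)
The plan is to combine the general structure of limits in $\CC(d,\x(0),J_0)$ (recalled just before the statement, from \cite{IP}) with the dimension counts established in Section \ref{sec:tangency}, and with the explicit enumeration on the quadric given in Proposition \ref{prop:quadric}. First I would record what is already known for free: by the description of stable-map limits recalled above, any $\overline f\in\CC(d,\x(0),J_0)$ has domain a connected nodal rational curve, each node over $E$ separates a component mapped into $X_0$ from one mapped into $X_1$, and matching tangencies at $E$, with homology classes controlled by Lemma \ref{lem:homology}. The key preliminary reductions are: (i) no component is entirely mapped to $E$, and (ii) the restriction of $\overline f$ to each component is simple. For (i) I would argue that a component mapped into $E$ would force, via the matching condition and Lemma \ref{lem:homology}, an excess of intersection with $E$ contradicting positivity of intersections, after a dimension count showing such configurations are not generic. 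For (ii), the non-simple components are ruled out exactly as in Lemma \ref{cor:transverse finite} and Proposition \ref{prop:finite covering}: a multiply-covered component of class $d_0$ would be subject to more point constraints than $c_1(X)\cdot d_0-1$, which is impossible.

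Next I would carry out the dimension bookkeeping on the two sides of $E$. The components in $X_1$ must absorb all point constraints except at most the one point $p_0\in X_0$, while the components in $X_0$ satisfy only tangency conditions along $E$ (plus possibly $p_0$). Using $c_1(X_0)$ dual to $2(l_1+l_2)$ and the fact that any irreducible $J_0$-holomorphic curve of positive $l_1+l_2$-area meeting $E$ must have nonnegative expected dimension, I would show via Proposition \ref{prop:quadric} that every component of $C_0$ realizes a class $l_i$ or, only if it contains $p_0$, the class $l_1+l_2$. The balancing of the total class then forces $C_1$ to realize $d-k[E]$ for the relevant $k$, and the tangency profile at $E$ to be the one stated (all simple tangencies of order $1$ in case $(1)$, with one extra tangency or one $l_2$-type component in case $(2)$). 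Finiteness of $\CC(d,\x(0),J_0)$ follows from Proposition \ref{prop:finite} applied to the $X_1$-side together with the finiteness already obtained on the $X_0$-side.

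Finally I would treat the count of preimages, i.e. how many elements of $\CC(d,\x(t),J_t)$ converge to a given $\overline f$. This is the standard gluing step: the number of smoothings of each node over $E$ equals the common order of contact there, so a node where both branches meet $E$ with multiplicity $i$ contributes $i$ gluing parameters. In case $(1)$ and case $(2)(a)$, all contacts are transverse (order $1$), giving a unique smoothing; in case $(2)(b)$ with $\alpha=2e_1$ the two tangency points are again transverse and the smoothing is unique, whereas with $\alpha=e_2$ there is a single contact of order $2$, producing exactly two smoothings. I expect the main obstacle to be this last gluing computation, specifically making rigorous the claim that the number of preimages equals the contact multiplicity; I would either invoke the relevant gluing result from \cite{IP} directly or reduce to the local model near $E$ where the smoothing of $x y = t$ against an order-$i$ tangency is governed by the $i$-th roots of $t$. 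Once the multiplicities are pinned down, matching them against the classes $l_i$ versus $l_1+l_2$ of the distinguished component $\overline C'$ yields the stated enumeration.
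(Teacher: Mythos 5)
Your plan follows the paper's proof quite closely in outline --- same decomposition into the $X_0$- and $X_1$-sides, same use of Proposition \ref{prop:quadric} to pin down the components of $C_0$ (class $l_i$, or $l_1+l_2$ with $\alpha=2e_1$ or $e_2$ for the component through $p_0$), and the same local model $xy=t$ with smoothing parameter $\mu(t)$ satisfying $\mu(t)^s=t/(ab)$, so that a contact of order $s$ yields exactly $s$ deformations (one in cases (1), (2)(a), and (2)(b) with $\alpha=2e_1$; two when $\alpha=e_2$). However, your justification of simplicity contains a step that fails as stated. You claim that a multiply-covered component of class $d_0$ ``would be subject to more point constraints than $c_1(X)\cdot d_0-1$, which is impossible.'' That naive count only works when the constraints are strictly over-determined, e.g.\ in case (1) and via Lemma \ref{cor:transverse finite} in case (2)(a). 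The whole point of Proposition \ref{prop:finite covering} is that the borderline cases $(k_1,k_2)=(1,0),(0,1),(0,0)$ are \emph{not} excluded by dimension: the Remark following it exhibits, for $\C P^2$ blown up at a point, multiple covers of a line that occur for \emph{every} $J$ with precisely the constraint patterns at hand. The dangerous situation is your case (2)(b) with $\alpha=e_2$: there $C_1$ is irreducible and constrained by $c_1(X)\cdot d-2=\delta\,c_1(X)\cdot d_0-2$ points, which is perfectly consistent with $\delta=2$ and $c_1(X)\cdot d_0=1$. The paper rules this out not by a dimension count but by geometry: if $\overline f_{|C_1}$ factored through a degree-$\delta$ cover $\rho$, then since $\x(0)\cap X_1\neq\emptyset$, Proposition \ref{prop:finite covering} forces at least two ramification points of $\rho$ to be mapped to $E$, hence some component $\overline C''\neq\overline C'$ of $C_0$ would have to meet $E$ non-transversely, contradicting $\overline f_*[\overline C'']=l_i$. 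This Riemann--Hurwitz/ramification argument is the missing idea in your proposal.

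A secondary weakness: your sketch for excluding components entirely mapped to $E$ (``excess of intersection with $E$ contradicting positivity, after a dimension count'') is too vague to evaluate and is not obviously repairable by positivity alone --- note that $[E]^2=2>0$ in $X_0$, so a component inside $E$ creates no negative intersection excess on that side, and the subtlety of how such components absorb contact multiplicities is exactly what the relative stable map formalism handles. The paper does not attempt a self-contained argument here; it quotes \cite[Example 11.4 and Lemma 14.6]{IP}, and you should do the same. With these two repairs --- citing \cite{IP} for the no-component-in-$E$ claim, and replacing the naive point count by the ramification argument in the tangency case --- the rest of your plan (homology bookkeeping via Lemma \ref{lem:homology}, finiteness via Proposition \ref{prop:finite}, and the gluing count from the $s$-th roots of $t/(ab)$) matches the paper's proof.
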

\begin{proof}
The fact that no component of $\overline C$ is entirely mapped to $E$
 follows from
{\cite[Example 11.4 and Lemma 14.6]{IP}}. 
By assumption we have $[E]^2=-2$ in $X_1$, so the adjunction
formula implies that $c_1(X_1)\cdot [E]=0$.  Since the curve $\overline f( C_1)$ passes through all the points in
$\x(0)\cap X_1$ and realizes the class $d -  k[E]$ in
$H_2(X_1;\Z)$, the following hold. 

\begin{enumerate}
\item If $\x(0)\cap X_0=\emptyset$, then 
the map $\overline f_{|C_1}$ is constrained by 
$c_1(X)\cdot d-1 = c_1(X_1)\cdot (d -  k[E]) -1$ points in $X_1$. 
Hence the curve $C_1$ is irreducible, the map $\overline
f_{|C_1}$ is simple, and  $\overline f( C_1)$
intersects $E$ transversely in $d\cdot [E]+2k$ distinct points. 
The curve $\overline C$ is rational, and any $J_0$-holomorphic curve in
$X_0$ intersects $E$, so we deduce that the curve $C_0$ has
exactly $d\cdot [E]+2k$ irreducible components. Furthermore 
the image of any of them realizes
a class $l_i$.

\medskip
\item If $\x(0)\cap X_0=\{p_0\}$,
then
the map $\overline f_{| C_1}$ is constrained by 
$c_1(X_1)\cdot (d -  k[E]) -2$ points in $X_1$.
Hence we are in one of the following
situations.

\begin{enumerate}
\item The curve $C_1$ is irreducible, and
$\overline f^{\ -1}(E)$ consists in $d\cdot [E]+2k$ distinct points. 
As above, the curve $C_0$ must have
exactly $d\cdot [E]+2k$ irreducible components, and 
the image of any of them realizes
a class $l_i$. Since $\overline f(\overline C')$  contains
$p_0$, the map
$\overline f_{|C_1}$ is also constrained by the point $\overline f(\overline C')\cap E$.
Hence  $\overline
f_{| C_1}$ is a simple map by Lemma \ref{cor:transverse finite}.

\item The curve $ C_1$ has two connected components, the map $\overline
f_{|C_1}$ is simple and fixed by $\x(0)\cap X_1$, and  $\overline f( C_1)$
intersects $E$ transversely in $d\cdot [E]+2k$ distinct points. Hence
the curve $ C_0$ must have
exactly $d\cdot [E]+2k-1$ irreducible components, one of them, say
$\overline C''$,
intersecting the two components of $C_1$. The curve
$\overline f(\overline C'')$ has
to realize the class $l_1+l_2$, and  the image of any other
irreducible component of $ C_0$ realizes a class $l_i$. Since
all these latter components are constrained by 
$\overline f( C_1)\cap E$, we deduce that $\overline
C'=\overline C''$.

\item The curve $C_1$ is irreducible,  and 
$\overline f^{\ -1}(E)$ consists in $d\cdot [E]+2k-1$ distinct points. 
Again, the curve $ C_0$ must have
exactly $d\cdot [E]+2k-1$ irreducible components, the image of one of
them being tangent to $E$. 
As in the case (2)(b), we deduce that this component must be
$\overline C'$, that its image must realize the class $l_1+l_2$, and that 
 the image of any other
irreducible component of $C_0$ realizes a class $l_i$. 

Suppose that $\overline f$ restricts to a non-simple map on $C_1$,
and
let $\rho:\C P^1\to \C P^1$ be the ramified covering through which 
$\overline f_{| C_1}$
factors.
 Since
 $\x(0)\cap X_1\ne\emptyset$, Proposition \ref{prop:finite covering}
implies that  at least two
ramification points of $\rho$ should be mapped to $E$.
 Hence there should exist
an irreducible component $\overline C''$  of $C_0$ 
distinct from $\overline C'$
and intersecting $E$ non-transversely. This contradicts the fact that
$\overline f_*[\overline C'']= l_i$.
\end{enumerate}

\end{enumerate}
The statement about the number of elements of
$\CC(d,\x(t),J_t)$  converging to $\overline f$ as $t$ goes to 0
follows from  {\cite{IP}}. Let us recall briefly the behavior, close to
a smoothing 
of an intersection point $p$ of $ C_1$  and $ C_0$, of  an 
elements $f_t;C_t\to Y_t$ of
$\CC(d,\x(t),J_t)$  converging to $\overline f$.
In local coordinates $(t,x,y)$ at $\overline f(p)$, the manifold
$Y$ is given by the equation $xy=t$, the manifold $X_0$
(resp. $X_1$) being
locally given by $\{t=0\ \mbox{and}\ y=0\}$ 
(resp. $\{t=0\ \mbox{and}\ x=0\}$). If the order of
intersection of $\overline f_{ C_0}$ and $E$ at $f(p)$ 
is equal to $s$, then the maps $\overline f_{ C_0}$ and
$\overline f_{ C_1}$ have expansions
$$ x(z)=az^s + o(z^s)\quad\mbox{and}\quad y(w)=bw^s +o(w^s), $$
where $z$ and $w$ are local coordinates at $p$ of $ C_0$ and
$ C_1$ respectively. 
For $0<|t|<<1$, there exists a solution
 $\mu(t)\in\C^*$ of  
$$\mu(t)^s=\frac{t}{ab},$$
such that the smoothing of $\overline C$ at $p$ is locally given by
$zw=\mu(t)$, and the map $f_t$ is approximated by the map
$$\{zw=\mu(t)\}\subset \C^2\mapsto (t,az^s,bw^s) $$
close to the smoothing of $p$ (see {\cite[Section 6]{IP}}, and also
  {\cite[Section 6.2]{TehZin14}} for details).
Furthermore, such maps $f_t \in \CC(d,\x(t),J_t)$ converging to
$\overline f$ are in one to one correspondence with a choice of such
$\mu(t)$ for each point of $C_0\cap C_1$. 
\end{proof}

Next Corollary generalizes Abramovich-Bertram-Vakil formula.
\begin{cor}\label{lem:lem2}
Suppose that $\x(0)\cap X_0=\emptyset$, and let $\overline f:\overline C\to  X_\sharp$
be  an element of  $\CC(d,\x(0),J_0)$.
Define 
 $\CC_{\overline f}$ to be the set of elements  $\overline
 f':\overline C'\to X_\sharp$ in $\CC(d,\x(0),J_0)$ such that 
 $\overline f_{| C_1}=\overline f'_{| C'_1}$.
If $\overline f_*[ C_1]=d-k[E]$, then $\CC_{\overline f}$ has
exactly  $\binom{d\cdot 
  [E]+2k}{k}$ elements.
\end{cor}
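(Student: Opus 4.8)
We have $\overline{f}:\overline{C}\to X_\sharp$ in $\CC(d,\x(0),J_0)$ with $\x(0)\cap X_0=\emptyset$. By Proposition \ref{prop:degeneration}(1), the component $C_1$ is irreducible and $\overline{f}(C_1)$ meets $E$ transversely in exactly $d\cdot[E]+2k$ distinct points, while $C_0$ consists of exactly $d\cdot[E]+2k$ irreducible components each realizing some $l_i$. We want to count elements $\overline{f}'$ that share the same $X_1$-part $\overline{f}_{|C_1}$. So the combinatorial content is: given the fixed curve $\overline{f}(C_1)$ with its $d\cdot[E]+2k$ intersection points on $E$, count the ways to complete it to a valid stable map into $X_\sharp$.

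Let me think about what varies. The $X_0$-side components are each lines in the class $l_1$ or $l_2$ in $\C P^1\times \C P^1$. At each of the $d\cdot[E]+2k$ intersection points $q$ on $E$, there must be a component of $C_0$ passing through $q$ and meeting $E$ there (matching the order of contact $1$). Through a given point $q\in E$, by Proposition \ref{prop:quadric} there is a unique $J_0$-line of class $l_1$ and a unique of class $l_2$ through $q$ — wait, that's not quite right; a line $l_1$ through $q$ is determined, and a line $l_2$ through $q$ is determined. But the constraint is homological: we computed $\overline{f}_*[C_0]=kl_1+(d\cdot[E]+k)l_2$ in Lemma \ref{lem:homology}. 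So among the $d\cdot[E]+2k$ component-lines, exactly $k$ must be of class $l_1$ and exactly $d\cdot[E]+k$ of class $l_2$. So the count should be: the number of ways to choose which $k$ of the $d\cdot[E]+2k$ intersection points carry an $l_1$-line (the rest carrying $l_2$-lines), which is $\binom{d\cdot[E]+2k}{k}$.

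So **the plan is** this. First I would invoke Proposition \ref{prop:degeneration}(1) to fix the shape of any $\overline{f}'\in\CC_{\overline{f}}$: its $X_1$-part equals $\overline{f}_{|C_1}$ by definition of $\CC_{\overline{f}}$, so $\overline{f}'(C_1)$ meets $E$ in the same $d\cdot[E]+2k$ points $q_1,\dots,q_{d\cdot[E]+2k}$, and its $X_0$-part is a union of exactly $d\cdot[E]+2k$ lines, each of class $l_1$ or $l_2$, with one line attached at each $q_j$ (since $\overline{C}'$ is a connected nodal rational curve and every component of $C_0$ meets $E$, matching the contact order $1$ on each side). Next I would argue that at each $q_j$ the choice of line is determined by its class: by Proposition \ref{prop:quadric} there is a unique $J_0$-holomorphic curve of class $l_i$ through a prescribed point, so specifying whether the line at $q_j$ is of class $l_1$ or $l_2$ determines it uniquely. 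Then I would impose the homological constraint from Lemma \ref{lem:homology}: $\overline{f}'_*[C_0]=kl_1+(d\cdot[E]+k)l_2$, which forces exactly $k$ of the lines to be of class $l_1$. Finally, I would note that any such assignment yields a genuine element of $\CC(d,\x(0),J_0)$ (connectedness and the matching conditions are automatic), so $\CC_{\overline{f}}$ is in bijection with the set of $k$-element subsets of $\{q_1,\dots,q_{d\cdot[E]+2k}\}$, giving $\binom{d\cdot[E]+2k}{k}$.

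**The main obstacle** I anticipate is verifying that the assignment is genuinely free and complete — i.e. that every choice of $k$ special points produces a valid stable map and that distinct choices give non-isomorphic maps. One must check there are no unwanted coincidences: that the $l_1$-line and the $l_2$-line through a fixed $q_j$ are genuinely distinct (guaranteed since $l_1\neq l_2$ and two distinct $J_0$-lines through $q_j$ meet only at $q_j$ by positivity), and that a line through $q_j$ does not accidentally pass through another $q_{j'}$ in a way that would force extra incidences or change the stable-map combinatorics. Since the points $q_j$ lie on $E$ which realizes $l_1+l_2$, and each $l_i$-line meets $E$ in a single point (as $l_i\cdot(l_1+l_2)=1$), the line attached at $q_j$ meets $E$ only at $q_j$, so no interference occurs. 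I would also make sure the resulting $\overline{C}'$ is connected and rational, which holds because each of the $d\cdot[E]+2k$ lines attaches to the single irreducible rational curve $C_1$ at exactly one node. With these transversality and genericity points settled via Propositions \ref{prop:quadric} and \ref{prop:degeneration}, the bijection with $k$-subsets is immediate and the count $\binom{d\cdot[E]+2k}{k}$ follows.
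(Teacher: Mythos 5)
Your proof is correct and takes essentially the same route as the paper: the published proof likewise combines Proposition \ref{prop:degeneration} (each of the $(d-k[E])\cdot [E]=d\cdot[E]+2k$ transverse intersection points of $\overline f(C_1)$ with $E$ carries exactly one component of $C_0$ of class $l_1$ or $l_2$) with Lemma \ref{lem:homology} (the class $\overline f_*[C_0]=kl_1+(d\cdot[E]+k)l_2$ forces exactly $k$ of them to realize $l_1$), giving the count $\binom{d\cdot[E]+2k}{k}$. The extra verifications you carry out --- uniqueness of the $l_i$-line through each point via Proposition \ref{prop:quadric}, and the fact that every such configuration genuinely occurs as a limit --- are left implicit in the paper, the latter being supplied by the gluing description (the one-to-one correspondence with choices of $\mu(t)$) at the end of the proof of Proposition \ref{prop:degeneration}.
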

\begin{proof}
It follows  from Proposition \ref{prop:degeneration} and 
Lemma \ref{lem:homology}
that
 $\overline f_*[ C_1]=d-k[E]$ if and only if 
the image of exactly $k$ irreducible components of $C_0$ 
realize the class $l_1$.
Since $(d-k[E])\cdot [E]= d\cdot[E]+2k$,
the result follows.
\end{proof}

\subsection{Proof of Theorem \ref{mainthm}}\label{sec:proof mainthm}
Theorems \ref{thm:W vanish} and \ref{mainthm} are obtained by
considering a real version of the symplectic sum described in Section
\ref{sec:sympl sum}. We first  provide the proof of Theorem
\ref{mainthm}
 since it is a immediate adaptation of Proposition
 \ref{prop:degeneration} to the real setting. 
We equip the disc $D$ from Proposition
 \ref{thm:degen} with the standard complex conjugation, the 
 symplectic manifold $(X_i,\omega_i)$ with a real structure $\tau_i$,
 and 
$(Y,\omega_Y)$  with a
 real structure $ \tau_Y$ such that the
 map $\pi:Y\to D$ is real.
Furthermore we choose the set of sections 
$\x:D \to Y$ to be real. Note that each fiber $Y_t$ 
 comes naturally equipped with a real structure $\tau_t$ when $t\in\R$.
If $\F\cap \R E=\emptyset$, then by perturbing $\F$ if necessary, we
may assume that $\overline f(\overline C)\cap \F\cap E=\emptyset$ for
all $\overline f\in \CC(d,\x(0),J_0)$.

Theorem \ref{mainthm} is obtained by choosing $\x$ such that
$\x(0)\cap X_0=\emptyset$.

\begin{proof}[Proof of Theorem \ref{mainthm}(1)]
Assume that $\x(0)\cap X_0=\emptyset$ and $ \x(0)\cap \R X_1\subset L_1$, and
let us choose a real element   $\overline f:\overline C\to X_\sharp$ 
of $\CC(d,\x(0),J_0)$. Denote
 by $a$ (resp. $b$) the number of real
(resp. pairs of $\tau_0$-conjugated) points of intersections of $E$ and 
$\overline f(\overline C)$.
The map $\overline f:C\to X$ is
real, and by Corollary \ref{lem:lem2} the set
$\CC_{\overline f}$ has
exactly
\begin{itemize}
\item $\sum_{k=a_i+2b_i}{a \choose
  a_i}{b\choose b_i}$ real elements if $\tau_0$ acts
  trivially on $H_2(X_0;\Z)$;

\item $2^k$ if $a=0$ and $b=k$, and $0$ otherwise, real
  elements if $\tau_0$ exchanges $l_1$ and $l_2$.
\end{itemize}

By assumption $(H_1)$, we have that $\F_0\cup L_0$ represents a cycle
$\gamma V$ in $H_2(X_0;\Z/2\Z)$ with $\gamma=0,1$, so
in both cases above we  have 
$m_{L_\sharp,\F_0}(\overline f(\overline C))= m_{L_1,\F_\sharp}(\overline f_{| C_1})+ \gamma
(a+b)$. By Proposition \ref{prop:degeneration}, any element $\overline f$ of
$\CC(d,\x(0),J_0)$ is the limit of a  unique element of 
$\CC(d,\x(t),J_t)$, so this latter has to be real when $\overline f$
is real and $t\in \R^*$.
Hence  to end the proof of Theorem \ref{mainthm}(1), it remains to
show that no node appears in a neighborhood of $E\cap \overline
f(\overline C)$ when deforming $\overline f$.
This follows from the description provided at the end of the
proof of Proposition \ref{prop:degeneration} of the local deformation
of $\overline f$ (since $s=1$ in the present case). An alternative
proof is to observe that $\overline f(\overline C)$ has as many nodes
as any of its deformation:
\[\begin{aligned}
&\frac{(d-k[E])^2 -c_1(X_1)\cdot (d-k[E]) +2}{2} + k(d\cdot[E] +k)
 &= &  \frac{d^2 -c_1(Y_t)\cdot d  +2 }{2}
\end{aligned}
\]
since $[E]^2=-2$ and $c_1(X_1)\cdot [E]=0$.
\end{proof}

\medskip
\begin{proof}[Proof of Theorem \ref{mainthm}(2)]
Assume again that $\x(0)\cap X_0=\emptyset$ and $\x(0)\cap \R X_1\subset L_1$.
Recall that by assumption $(H_2)$ we have
 $\tau_0=\tau_{el}$, which implies in particular that $a=0$.
Suppose that $b\ne 0$, and choose a pair $\{p,\tau_0(p)\}$ of 
$\tau_0$-conjugated intersection points of $E$ with $\overline f(\overline C)$.
Let $\overline f':\overline C'\to X_\sharp$ be 
an element of
$\CC_{\overline f}$, and denote by $\overline C_p$ (resp. $\overline
C_{\tau_0(p)}$) the irreducible component of $C_0$
 whose image contains $p$ (resp. $\tau_0(p)$).
Define the map 
 $\overline
f'':\overline C'\to X_\sharp$ 
as follows: $\overline f''(x)=\overline f'(x)$ if $x\notin \overline  C_p\cup \overline
C_{\tau_0(p)}$, and  
$\overline f''(x)=\tau_0\circ \overline f'(x)$ if $x\in \overline  C_p\cup \overline
C_{\tau_0(p)}$.
The map $\overline f''$ is also an element of $\CC_{\overline f}$, and 
 it follows from Lemma \ref{lem:lem3} below that 
$m_{L_\sharp,\F_\sharp}(\overline f'(\overline C'))=-
 m_{L_\sharp,\F_\sharp}(\overline f''(\overline C'))$. Hence
$$\sum_{\overline f'\in \CC_{\overline f}} m_{L,\F_\sharp}(\overline
 f'(\overline C'))=0,$$
and Theorem \ref{mainthm}(2) is proved.
\end{proof}

Given a point $p\in E$, we denote by $C_p$
 the (unique) $J_{0|X_0}$-holomorphic curve in the class $l_1$
passing through $p$. 
\begin{lemma}\label{lem:lem3}
Let $\D\subset \C P^1\times \C P^1$
 be an embedded $\tau_{el}$-invariant disk with $\partial \D\subset E$.
Then if $p\in E\setminus  \partial \D$, the parity of the number of
intersection points of $\D$ with $C_p$ and 
$C_{\tau_{el}(p)}$ are different.
\end{lemma}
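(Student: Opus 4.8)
The plan is to recast the two intersection numbers as fibre counts of a single map and to detect their difference by a boundary-parity argument. For the chosen $J_0$ (for which $E$ is holomorphic), the positivity of intersections together with the fact that the Gromov--Witten count in the class $l_1$ equals $1$ give, exactly as in the first lemma of this subsection, an $S^2$-fibration $\pi\colon X_0\to E$ whose fibre over $q\in E$ is the curve $C_q$ in the class $l_1$ through $q$. Since $C_q\cdot[E]=l_1\cdot(l_1+l_2)=1$ and two distinct $J_0$-holomorphic curves meet positively, each fibre meets $E$ transversely in the single point $q$; thus $E$ is a section of $\pi$ and $\pi$ restricts to the identity on $E$. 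After a small perturbation of $\D$ rel $\partial\D$ we may assume $p$ and $\tau_{el}(p)$ are regular values of $\pi|_\D$, and then, as $C_q=\pi^{-1}(q)$,
$$\#(\D\cap C_p)=\#(\pi|_\D)^{-1}(p),\qquad \#(\D\cap C_{\tau_{el}(p)})=\#(\pi|_\D)^{-1}(\tau_{el}(p)),$$
all these intersection points lying in the interior of $\D$ because $p,\tau_{el}(p)\notin\partial\D$.

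Next I would show that the parity of $\#(\pi|_\D)^{-1}(q)$ changes each time $q$ crosses $\partial\D$. Let $D_1,D_2$ be the two components of $E\setminus\partial\D$, take $q\in D_1$ and $q'\in D_2$, and join them by an embedded arc $a\subset E$ meeting $\partial\D$ transversely in one point. For generic $a$ the set $(\pi|_\D)^{-1}(a)$ is a compact $1$-manifold, and because $\pi$ restricts to the identity on $E$ its boundary is $(\pi|_\D)^{-1}(q)\sqcup(\pi|_\D)^{-1}(q')\sqcup(a\cap\partial\D)$. As a compact $1$-manifold has an even number of endpoints and $\#(a\cap\partial\D)=1$, we obtain $\#(\pi|_\D)^{-1}(q)+\#(\pi|_\D)^{-1}(q')\equiv1\pmod2$. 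Hence $\#(\D\cap C_q)$ has one parity for $q\in D_1$ and the opposite parity for $q\in D_2$.

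It remains to see that $p$ and $\tau_{el}(p)$ always lie in different halves, i.e. that $\tau_{el}$ interchanges $D_1$ and $D_2$; this is the heart of the matter. Since $\D$ is $\tau_{el}$-invariant, $\partial\D$ is a $\tau_{el}$-invariant circle and $\tau_{el}$ permutes $\{D_1,D_2\}$, the point being that the permutation is nontrivial. This is exactly the property invoked in the proof of Lemma \ref{lem:real disk}, and it holds for the configurations of Figure \ref{fig:H2} realising assumption $(H_2)$: when $\R E=\emptyset$ the involution $\tau_{el}|_E$ is fixed-point-free, so it cannot preserve a disk $D_i$ (a free involution of a disk is excluded by the Brouwer fixed point theorem) and the halves are swapped; when $\R E\neq\emptyset$ the relevant disk has $\partial\D=\R E$, the fixed circle of $\tau_{el}|_E$, which again bounds two halves exchanged by $\tau_{el}$. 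In general $\tau_{el}$ swaps the halves precisely when $\tau_{el}|_{\partial\D}$ is orientation-preserving, and this is what the real symplectic configuration guarantees.

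Putting $q=p$ and $q'=\tau_{el}(p)$ into the parity identity then yields $\#(\D\cap C_p)\not\equiv\#(\D\cap C_{\tau_{el}(p)})\pmod2$, as claimed. The only genuinely delicate step is the separation statement of the previous paragraph: one must use that $\partial\D$ divides $E$ into two halves that $\tau_{el}$ \emph{interchanges} — and not merely into two halves that it preserves — which is where the real-symplectic nature of $\D$ (rather than the mere $\tau_{el}$-invariance of an abstract embedded disk) is essential.
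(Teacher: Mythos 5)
Your proof is correct, but it runs on a genuinely different engine than the paper's. The paper deduces the lemma homologically from Lemma \ref{lem:real disk}: writing $D_1,D_2$ for the halves of $E\setminus\partial\D$, it uses $[D_1\cup\D]=l_1$ and $[D_2\cup\D]=l_2$ in $H_2(X_0;\Z/2\Z)$, the fact that a curve in class $l_1$ meets $E$ in a single point, and the evenness of the mod~2 intersection of $l_1$ with $[D_i\cup\D]$, so the parity of $\#(\D\cap C_q)$ is read off from which half contains $q$. You instead upgrade the $S^2$-fibration $\pi\colon X_0\to E$ from the paper's first lemma (noting $E$ is a section), convert $\#(\D\cap C_q)$ into the fibre count $\#(\pi|_\D)^{-1}(q)$, and prove the parity flip across $\partial\D$ by the boundary-of-a-$1$-manifold argument applied to $(\pi|_\D)^{-1}(a)$ for an arc $a$ crossing $\partial\D$ once; your count of endpoints, including the single point of $a\cap\partial\D$ via $\pi|_{\partial\D}=\mathrm{id}$, is accurate. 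Your route has a real advantage worth noting: the wall-crossing step is independent of the homology class of $\D$, so it isolates the one place where invariance enters, namely that $\tau_{el}$ must \emph{exchange} $D_1$ and $D_2$ — and you are right to flag this as the delicate point. For a bare $\tau_{el}$-invariant embedded disk this can fail (e.g.\ an invariant circle meeting $\R E$ transversely in two points, bounding an invariant disk pushed off a $\tau_{el}$-invariant half of $E$, has both halves preserved, and then both parities agree); the paper's proof of Lemma \ref{lem:real disk} asserts the exchange with the brief justification that $E$ is a real curve, which implicitly uses the standing assumptions on $\F$ forcing $\partial\D=\R E$ or $\R E=\emptyset$, exactly the two cases of assumption $(H_2)$ and Figure \ref{fig:H2} that you verify (Brouwer for the free involution when $\R E=\emptyset$, conjugate halves when $\partial\D=\R E$). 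So your argument is sound wherever the lemma is invoked, and if anything makes the implicit hypothesis more visible than the paper's version does; your orientation criterion on $\tau_{el}|_{\partial\D}$ is also correct, though the closing appeal to what "the real symplectic configuration guarantees" should be pinned to those standing assumptions on $\F$ rather than left general.
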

\begin{proof}
Denote by $D_1$ and $D_2$ the two halves of $E\setminus \partial \D$.
According to the proof of Lemma \ref{lem:real disk}, up to exchanging
$D_1$ and $D_2$ we have
$[D_1\cup \D]=l_1$ and $[D_2\cup
  \D] =l_{2}$ in $H_2(\C P^1\times \C P^1;\Z/2\Z)$.
Any
$J_{0|X_0}$-holomorphic curve in the class $l_i$ intersects $E$ in
exactly one point, so the result follows
 from the fact that both
$C_p$ and $C_{\tau_{el}(p)}$ intersect $D_1\cup \D$ in an even number
 of points.
\end{proof}

\subsection{Proof of Theorem  \ref{thm:W vanish}}\label{sec:proof vanish}
We prove Theorem  \ref{thm:W vanish} by choosing the set of sections
$\x$ so that $\x(0)\cap X_0$ is reduced to a single 
point. In this case, it follows from Proposition
\ref{prop:degeneration} that an element $\overline f$ 
of $\CC(d,\x(0),J_0)$ might be
the limit of two distinct elements of $\CC(d,\x(t),J_t)$. Next
proposition is a real version of Proposition \ref{prop:degeneration}
in this case.

\begin{prop}\label{prop:real degeneration}
Suppose that $\x(0)\cap X_0=\{p_0\}$ and  $\x(0)\cap \R X_1\ne\emptyset$. 
Let $\overline f:\overline C\to X_\sharp$ be a real element 
of $\CC(d,\x(0),J_0)$, with a point $p\in C_1$ such that $\overline f(C_1)$ has a tangency with $E$ at
$\overline f(p)$. 
Given $t\ne 0$, let $f_1:\C P^1\to Y_t$ and  $f_2:\C P^1\to Y_t$ be
the two deformations of $\overline f$ in $\CC(d,\x(t),J_t)$ (see Proposition \ref{prop:degeneration}).

Then $p$ is a real point of $\overline C$,
and both $f_1(\C P^1)$ and $f_2(\C P^1)$ have a unique node $q$ arising from
the smoothing of $\overline C$ at $p$.
Moreover, there exists $\varepsilon=\pm 1$ such that
neither $f_1$ nor $f_2$ are real when
$\varepsilon t<0$, and both $f_1$ and $f_2$ are real when $\varepsilon t>0$.  
In this latter case, up to exchanging $f_1$ and $f_2$, we have
(see Figure   \ref{fig:real part 2}):
\begin{itemize}
\item $f_{1}^{-1}(q)\in \R P^1$;
\item $f_{2}^{-1}(q)\notin \R P^1$ and
$f_{2}(\R P^1)\cap U=\emptyset$, where $U$ is the connected
  component that contains $q$ of the intersection of $Y_t$ 
with  a small neighborhood in $\R Y$ of $\overline f(p)$.
\end{itemize}
\end{prop}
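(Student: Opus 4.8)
The plan is to analyze the local model near the tangency point $\overline f(p)$ provided at the end of the proof of Proposition~\ref{prop:degeneration}, and to track how the real structure $\tau_Y$ acts on it. Since $\overline f$ is tangent to $E$ at $\overline f(p)$, the order of contact is $s=2$, so the local picture is governed by the equation $\mu(t)^2 = t/(ab)$. The first task is to show that $p$ must be a real point of $\overline C$. Because $\overline f$ is real and $p$ is the unique tangency point of $\overline f(C_1)$ with $E$ (this follows from the dimension count in Proposition~\ref{prop:degeneration}(2)(c), where $\overline f_{|C_1}$ satisfies $c_1(X_1)\cdot d-2$ point conditions and has exactly one tangency with $E$), the involution must send $p$ to a tangency point of $\overline f(C_1)$ with $E$; by uniqueness this is $p$ itself, so $p$ is fixed by the real structure on $\overline C$, hence $p\in\mathbb R P^1$. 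I would also note that the smoothing of $\overline C$ at $p$ produces a single node $q$ on each deformation $f_i(\mathbb C P^1)$, since $s=2$ means two local branches collide into one node, consistent with the node count already verified in the proof of Theorem~\ref{mainthm}(1).

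Next I would carry out the reality analysis of the two values of $\mu(t)$. The two deformations $f_1,f_2$ correspond to the two square roots $\mu(t)=\pm\sqrt{t/(ab)}$, and the smoothing of $\overline C$ at $p$ is locally $zw=\mu(t)$ with the map $(z,w)\mapsto(t,az^2,bw^2)$. Because $p$ is real and $\overline f$ is real, the coefficients $a,b$ and the local coordinates can be chosen compatibly with the real structure, so that reality of the deformed map $f_i$ amounts to a reality condition on $\mu(t)$: the curve $\{zw=\mu(t)\}$ is $\tau_Y$-invariant precisely when $\mu(t)\in\mathbb R$ (or, after adjusting conventions, $\mu(t)\in i\mathbb R$). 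Since $\mu(t)^2=t/(ab)$ with $ab$ real, the two roots $\mu(t)$ are real exactly when $t/(ab)>0$ and purely imaginary exactly when $t/(ab)<0$. This produces the sign $\varepsilon=\mathrm{sign}(ab)$: for $\varepsilon t>0$ both $\mu(t)$ are real, hence both $f_1,f_2$ are real, and for $\varepsilon t<0$ neither is real.

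It remains to distinguish the two real deformations when $\varepsilon t>0$, which is the geometric heart of the statement. The two real values $\mu(t)=\pm\sqrt{t/(ab)}$ give two real smoothings of the same node, corresponding to the two topologically distinct ways of resolving a real node: one where the vanishing cycle is a real circle through $q$ (a hyperbolic/real node, with $f^{-1}(q)\in\mathbb R P^1$), and one where it is a pair of conjugate points (an elliptic/solitary node, with $f^{-1}(q)\notin\mathbb R P^1$ and the real locus $f(\mathbb R P^1)$ missing the component $U$ near $q$). I would make this precise by examining the fixed-point set of the induced involution on $\{zw=\mu(t)\}$: for one choice of sign the real locus $\{z\in\mathbb R,\,w=\mu(t)/z\}$ passes through the smoothed region, giving a real arc through a point mapping to $q$, while for the other sign the real locus avoids $U$ entirely and $q$ is the image of a pair of conjugate points. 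The main obstacle will be setting up the real local coordinates carefully enough that the two cases of the node resolution are cleanly read off from the sign of $\mu(t)$; once the standard local model of a real node degeneration is in place (as in \cite{IP}), the identification with Figure~\ref{fig:real part 2} is routine.
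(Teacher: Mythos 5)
Your proposal follows the paper's proof for the first two thirds of the statement, essentially verbatim in strategy: reality of $p$ via uniqueness of the tangency point, uniqueness (hence reality) of the node $q$ via the node count, and the reality analysis of the two square roots of $\mu(t)^2=t/(ab)$ with $a,b\in\R^*$, giving $\varepsilon=\mathrm{sign}(ab)$. The genuine gap is in the final bullet, $f_{2}(\R P^1)\cap U=\emptyset$, which you claim can be ``cleanly read off from the sign of $\mu(t)$'' by inspecting the fixed locus of the involution on $\{zw=\mu(t)\}$. This does not work in the leading-order model: for \emph{both} real roots $\mu=\pm\sqrt{t/(ab)}$ the real locus of $\{zw=\mu\}$ is nonempty (two hyperbola arcs), and its image under $g_t(z,w)=(t,az^2,bw^2)$ lies in the same quadrant $\{\mathrm{sign}(x)=\mathrm{sign}(a),\ \mathrm{sign}(y)=\mathrm{sign}(b)\}$ regardless of the sign of $\mu$. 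So to leading order the real images of $f_1$ and $f_2$ near the smoothing coincide; the sign of $\mu$ only distinguishes the combinatorics of how the four real half-arcs at $p$ are glued (this is how the paper deduces $f_1^{-1}(q)\in\R P^1$ versus $f_2^{-1}(q)\notin\R P^1$). The location of the node $q$ relative to the two branches of $\R Y_t$, and any possible incursion of the true real locus of $f_2$ into $U$, are controlled by higher-order terms that the model $g_t$ does not see; your statement that for one sign ``the real locus avoids $U$ entirely'' is therefore asserted, not proved, and the related phrase suggesting the real locus avoids the smoothed region altogether is wrong for either sign --- $f_2(\R P^1)$ does enter the real neighborhood of $\overline f(p)$, only on the hyperbola branch not containing $q$.

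The paper closes exactly this hole with a separate quantitative argument that your sketch is missing: supposing $f_{2}(\R P^1)\cap U\ne\emptyset$, it chooses a topological surface $S\subset Y_t$ through $q$, locally a cylinder in the variable $t$, and compares intersection counts --- $f_2^{-1}(S)$ would then contain four points in a neighborhood of the smoothing of $p$, whereas $g_t^{-1}(S)$ has only two points in $\{zw=\mu(t)\}$, contradicting that $f_2$ is approximated by $g_t$. Some argument of this kind, converting the approximation of $f_2$ by $g_t$ into a bound on real intersection points, is unavoidable; to repair your proof you should either reproduce this surface-counting step or otherwise justify that the $o(z^2)$, $o(w^2)$ corrections cannot push a real arc of $f_2$ into $U$. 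Note also that once the node preimages are known to be a conjugate pair $\pm\zeta$ with $\zeta\in i\R$, the leading model places $q$ in the quadrant $\{\mathrm{sign}(x)=-\mathrm{sign}(a),\ \mathrm{sign}(y)=-\mathrm{sign}(b)\}$, i.e.\ on the branch opposite to the image of the real locus --- this is the correct heuristic behind the picture you describe, but it still requires the approximation argument to become a proof.
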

 \begin{figure}[h]
\centering
\begin{tabular}{c}
\includegraphics[height=5.5cm, angle=0]{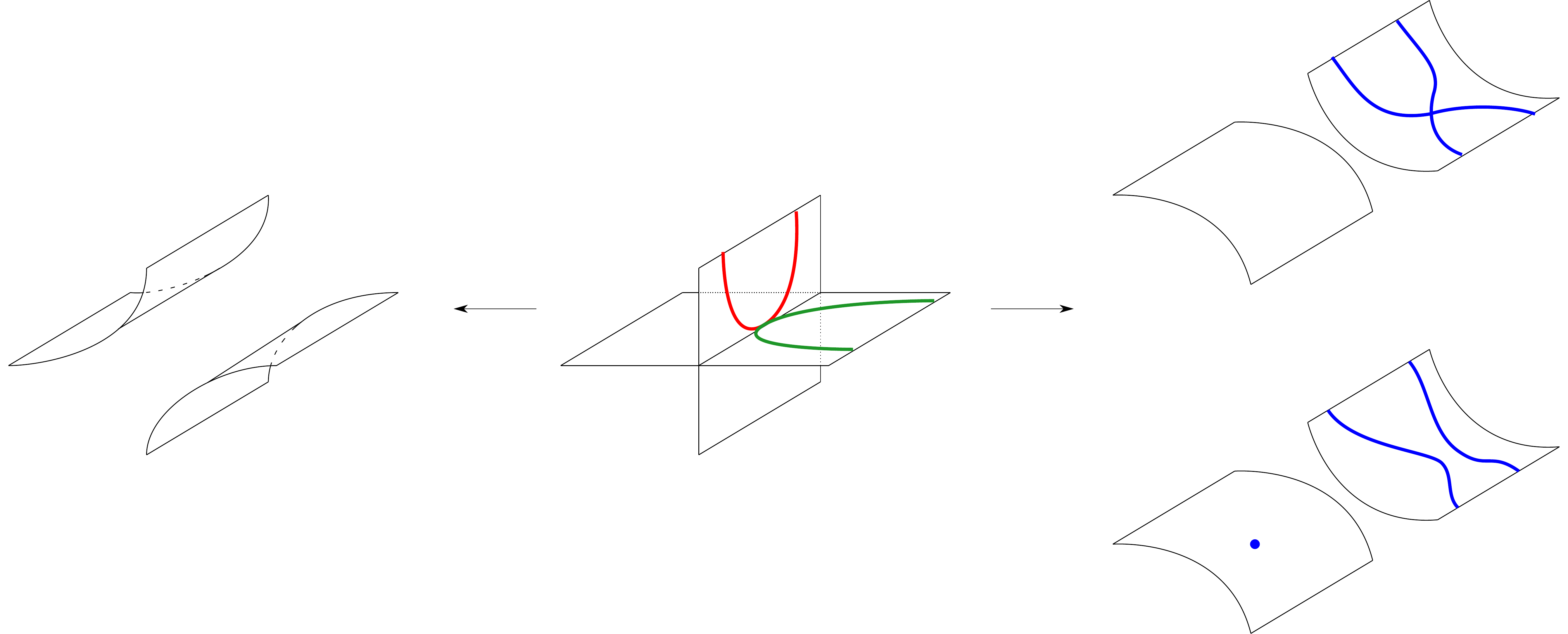}
 \put(-20, 145){$f_1(\R P^1)$}
 \put(-40, 120){$q$}
 \put(-20, 60){$f_2(\R P^1)$}
 \put(-80, 30){$q$}
 \put(-410, -30){$\varepsilon t<0$, no real deformation}
 \put(-120, -30){$\varepsilon t>0$, two real deformations}
\\ 
\end{tabular}
\caption{Real deformations of a real map 
$\overline f:\overline C\to X_\sharp$ which is the limit of two maps}
\label{fig:real part 2}
\end{figure}
\begin{proof}
It follows from Proposition \ref{prop:degeneration} that the point $p$
is unique, and hence real. Since $\overline f(\overline C)$ has one
node less that any of its deformation, we deduce that  both $f_1(\C
P^1)$ and $f_2(\C P^1)$ have a unique node $q$ arising from
the smoothing of $\overline C$ at $p$. Since $q$ is unique, it has to
be real if the deformation is real.

Recall from the end of the proof of Proposition
\ref{prop:degeneration} how looks like a deformation of $\overline f$
in a neighborhood of the smoothing of $p$.
The manifold
$Y$ is given in local coordinates $(t,x,y)$ at $\overline f(p)$
by the equation $xy=t$, the manifold $X_0$
(resp. $X_1$) being
locally given by $\{t=0\ \mbox{and}\ y=0\}$ 
(resp. $\{t=0\ \mbox{and}\ x=0\}$). Furthermore 
 the maps $\overline f_{C_0}$ and
$\overline f_{C_1}$ have expansions
$$ x(z)=az^2 + o(z^2)\quad\mbox{and}\quad y(w)=bw^2 +o(w^2), $$
where $a,b\in \R^*$, and $z$ and $w$ are local coordinates at $p$ of $C_0$ and
$C_1$ respectively. 
For $0<|t|<<1$, the two maps $f_1$ and $f_2$ correspond to the two solutions
 $\mu(t)\in\C^*$  of
$$\mu(t)^2=\frac{t}{ab}.$$
For each solution, the smoothing of $\overline C$ at $p$ is locally given by
$zw=\mu(t)$, and the corresponding deformation is approximated by the map
$$g_t:\{zw=\mu(t)\}\subset \C^2\mapsto (t,az^2,bw^2) $$
close to the smoothing of $p$.

If $tab<0$, then the two solutions of 
$\mu(t)^2=\frac{t}{ab}$ are complex conjugated, and neither $f_1$ nor
$f_2$ are real.
On the opposite, if $tab>0$, then the two solutions of 
$\mu(t)^2=\frac{t}{ab}$ are real, and both $f_1$ and $f_2$ are real.
 Moreover  the arcs of $\R C_0\setminus\{p\}$ and 
$\R C_1\setminus\{p\}$ are glued in a different way 
for $f_{1}$ and $ f_{2}$ (see Figure \ref{fig:real part 3}). 
In particular  one of them, say 
$ f_{1}$, satisfies $f_{1}^{-1}(q)\in \R P^1$, while 
$f_{2}$ satisfies $f_{2}^{-1}(q)\notin \R P^1$.
 \begin{figure}[h!]
\centering
\begin{tabular}{c}
\includegraphics[height=5.5cm, angle=0]{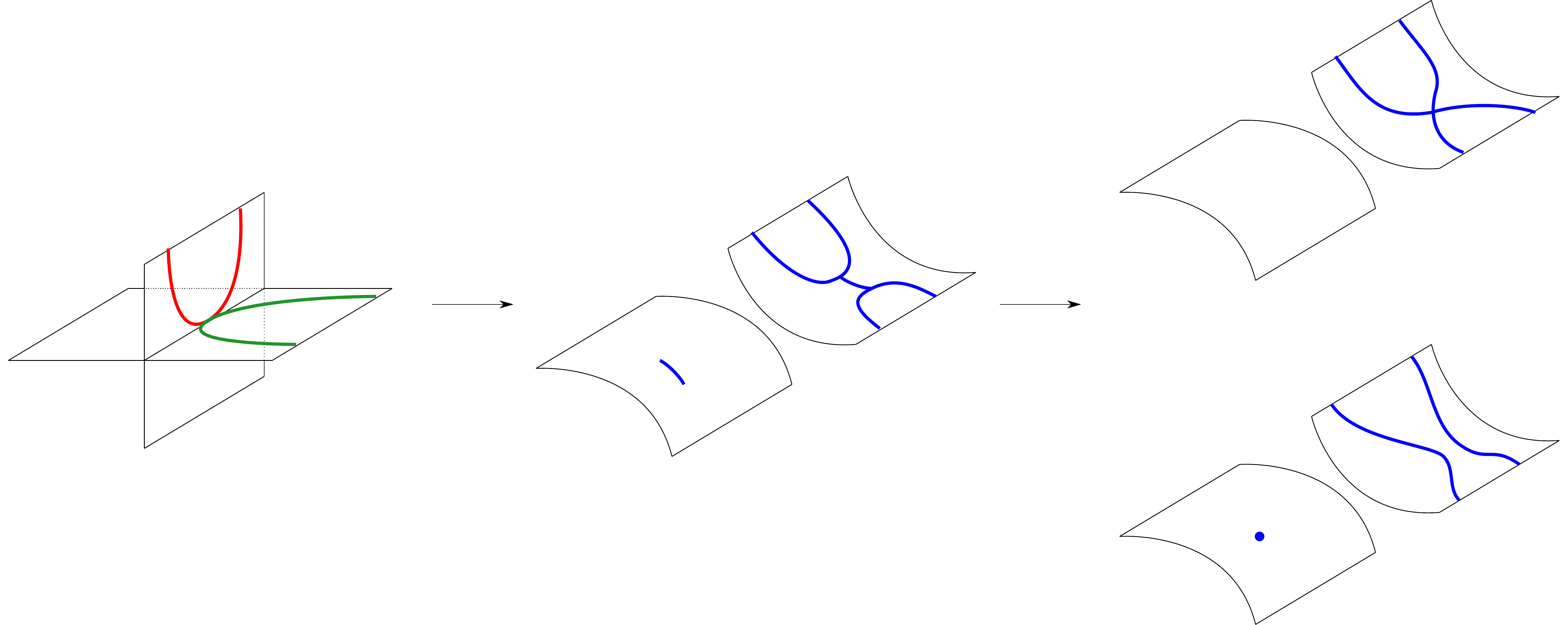}
 \put(-255, 20){Approximation by $g_t$}
\\ 
\end{tabular}
\caption{Real deformations of
$\overline f:\overline C\to X_\sharp$,
  intermediate step}
\label{fig:real part 3}
\end{figure}

Let $U$ be the connected
  component that contains $q$ of the intersection of $Y_t$ 
with  a small neighborhood in $\R Y$ of $\overline f(p)$. We have
to prove that $ f_{2}(\R P^1)\cap U=\emptyset$.
Suppose that this is not the case, and let $S\subset Y_t$ be a topological surface passing
through $q$, and locally a cylinder in the variable $t$ at $q$. 
Then the set $ f_{2}^{-1}(S)$ would contain four points in a
neighborhood of a smoothing of $p$. However  
the set $ g_t^{-1}(S)$ has only two points in
$\{zw=\mu(t)\}$, which contradicts the fact that $ f_{2}$
is approximated by $g_t$ close to the smoothing of $p$.
\end{proof}

Now we are ready to prove  Theorem  \ref{thm:W vanish}.
Recall that  by assumption $L_0$ is a disk, which in particular
implies that  $\tau_0=\tau_{el}$.

\begin{proof}[Proof of Theorem \ref{thm:W vanish}(1)]
Suppose that $\x(0)\cap X_0=\{p_0\}$, and  $\x(0)\cap
\R X_1$ is non-empty and contained in $L_1$. Without loss of generality,
we may assume that $\x(t)\cap \R Y_t \subset L$ when $t>0$ (and so
$\x(t)\cap \R Y_t \not\subset L$ when $t<0$, since $L'$ contains the
deformation of $p_0$).
A schematic picture of the
degeneration of $\R Y_t$ to $\R X_\sharp$ is provided in Figure \ref{fig:degen 1}. 
Denote by $\overline L$ the connected component of $\R X_1$ containing $L_1$.
Since $L_0$ is a disk, we necessarily have $L'\ne L$,
and $\overline L \setminus \R E$ is disconnected.
 \begin{figure}[h!]
\centering
\begin{tabular}{ccc}
\includegraphics[height=3cm, angle=0]{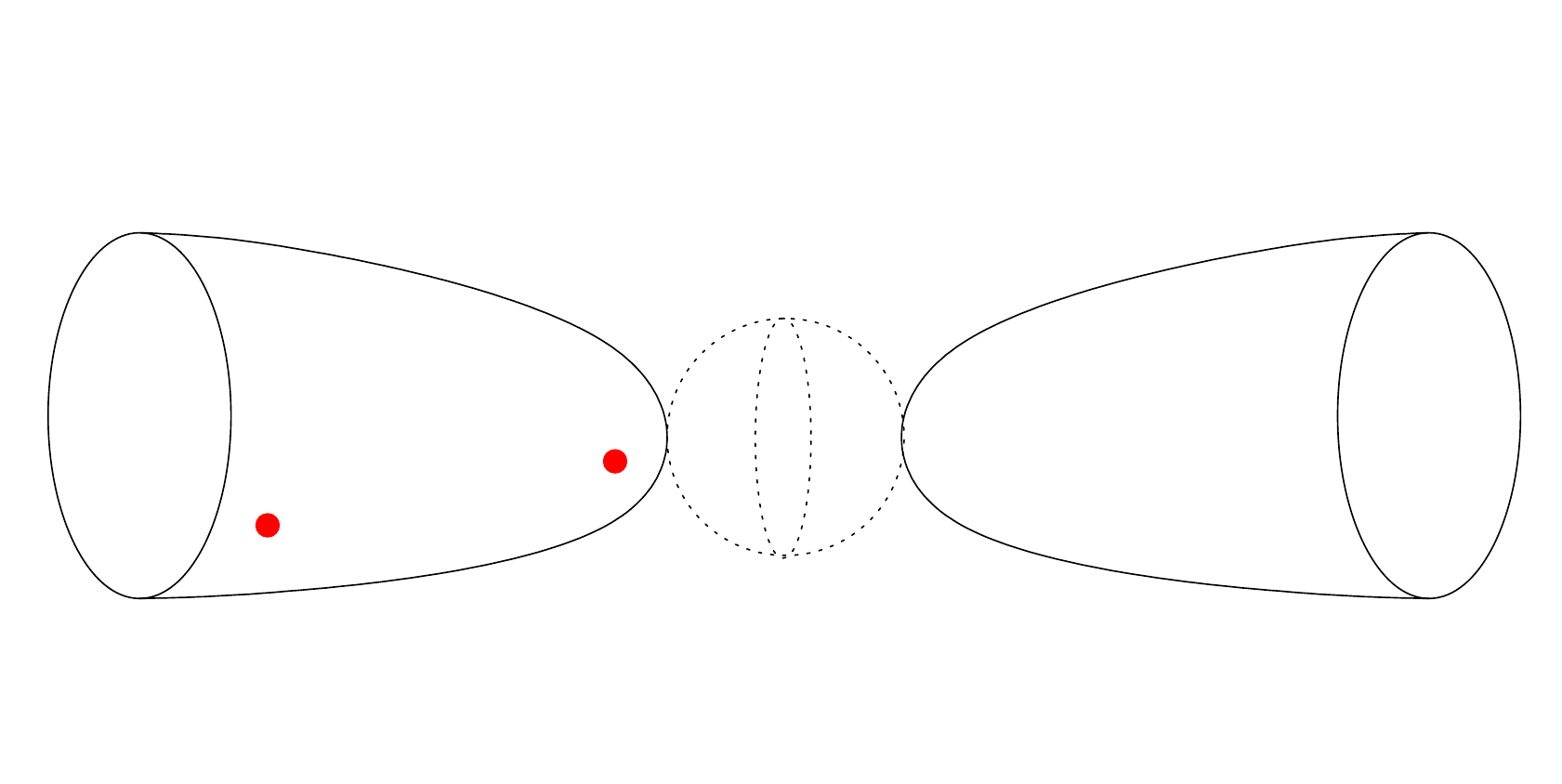}&
\put(-150,10){$L$}
\put(-95, 10){$S_V$}
\put(-50, 10){$L'$}
\put(-105, -20){$\R Y_t$, $t>0$}
\hspace{9ex} &
 \includegraphics[height=3cm, angle=0]{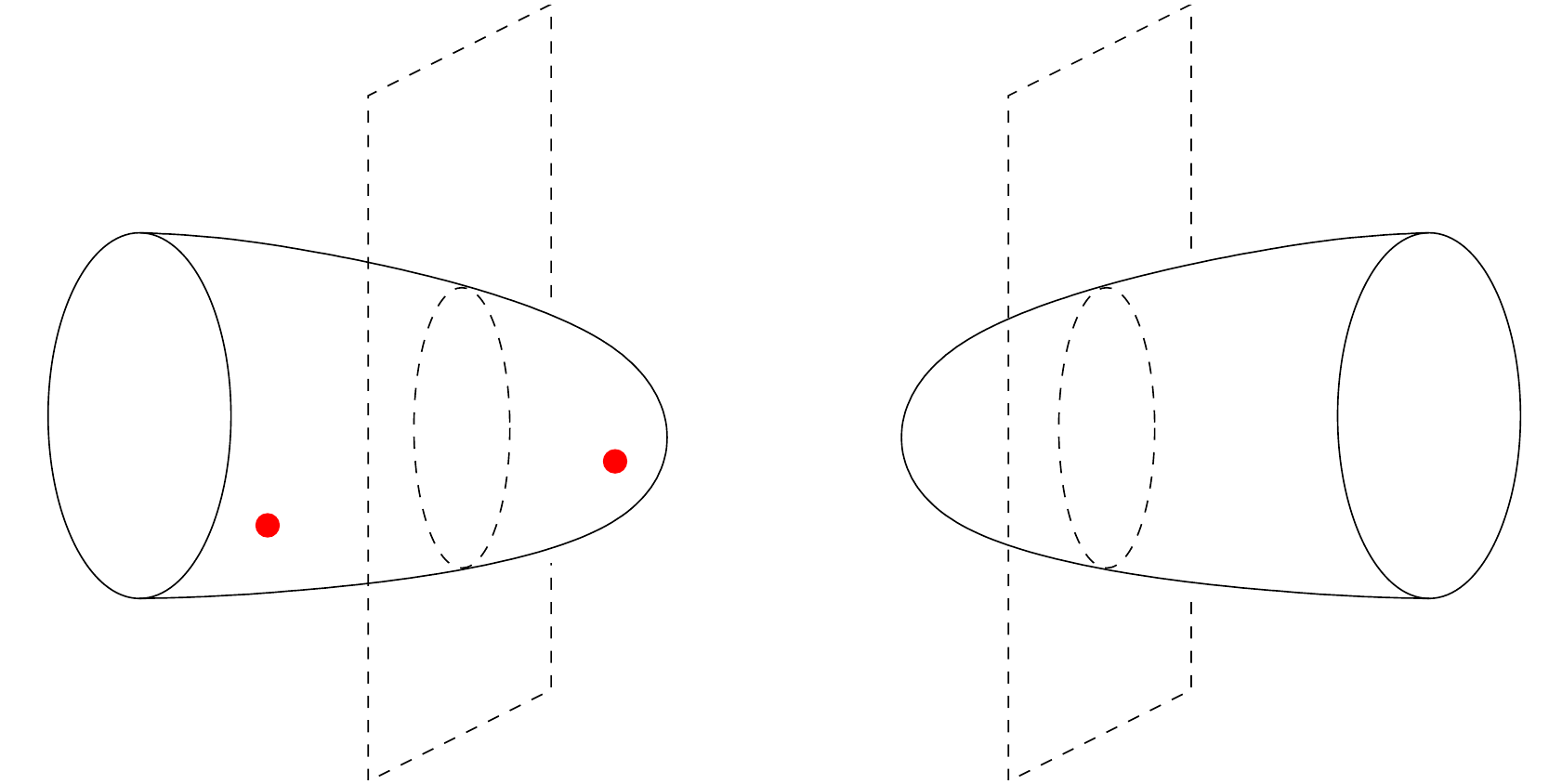}
\put(-95, -0){$\R X_0$}
\put(-165, -0){$\R X_1$}
\put(-35, -0){$\R X_1$}
\put(-95, -20){$\R X_\sharp$}
\\ 
\end{tabular}
\caption{Degeneration of $\R Y_t$ to $\R X_\sharp$}
\label{fig:degen 1}
\end{figure}

Let  $\overline f:\overline C\to X_\sharp$ be a real  element
of $\CC(d,\x(0),J_0)$. Recall that $\overline C'$ denotes the
irreducible component of $C_0$ whose image passes through
the point $p_0$.

Suppose first that $\overline f_*[\overline C']= l_i$. Since
$\tau_{el}$ exchanges $ l_1$ and $l_2$, there exists an irreducible
component $\overline C''$   of $C_0$ such that 
$\tau_{el}\circ \overline f(\overline C'')= \overline f(\overline C')$. 
However $\overline f(\overline C'')\cap \overline f(\overline
C')=\{p_0\}$, which contradicts that $J_0$ is generic.

Hence  $\overline f_*[\overline C']= l_1+l_2$ for any choice of
$\overline f$. Since $\tau_0=\tau_{el}$ and any other component of
$ C_0$ realizes a class $l_i$, the curve $\overline C'$ is
the only real component of $C_0$, and $\overline f^{\ -1}(\R
E)$ consists of at most  $2$ points. 
Suppose that $\overline f_{|\overline C'}\in \CC^{2e_1,
  0}(l_1+l_2,\{p_0\}\cup \x_E, J_0)$. 
In particular, 
 $C_1$ has two irreducible components $\overline C_{1}$
and
$\overline C_{2}$. 
Since $\x(0)\cap
\R X_1\ne\emptyset$, both  $\overline C_{1}$
and
$\overline C_{2}$ must be real with
a non-empty real part. Since $\overline C$ has arithmetic genus $0$, 
we deduce that $\overline f^{\ -1}(\R E)$ consists of precisely $2$
points, which are the intersection points of $\overline C'$ with 
$C_1$. Hence both  $\overline f(\R  \overline C_{1})$
and  $\overline f(\R\overline  C_{2})$ intersect $\R E$ in exactly
one point, where this intersection is transverse. 
But this contradicts the fact that $\overline L \setminus \R E$ is disconnected.

Hence  $\overline f_{|\overline C'}\in \CC^{e_2,
  0}(l_1+l_2,\{p_0\}\cup \x_E, J_0)$ for any choice of
$\overline f$, 
and Theorem \ref{thm:W vanish}(1) is now a consequence of
 Proposition \ref{prop:real degeneration} (see Figure \ref{fig:degen 2}).
 \begin{figure}[h!]
\centering
\begin{tabular}{ccc}
\includegraphics[height=2.5cm, angle=0]{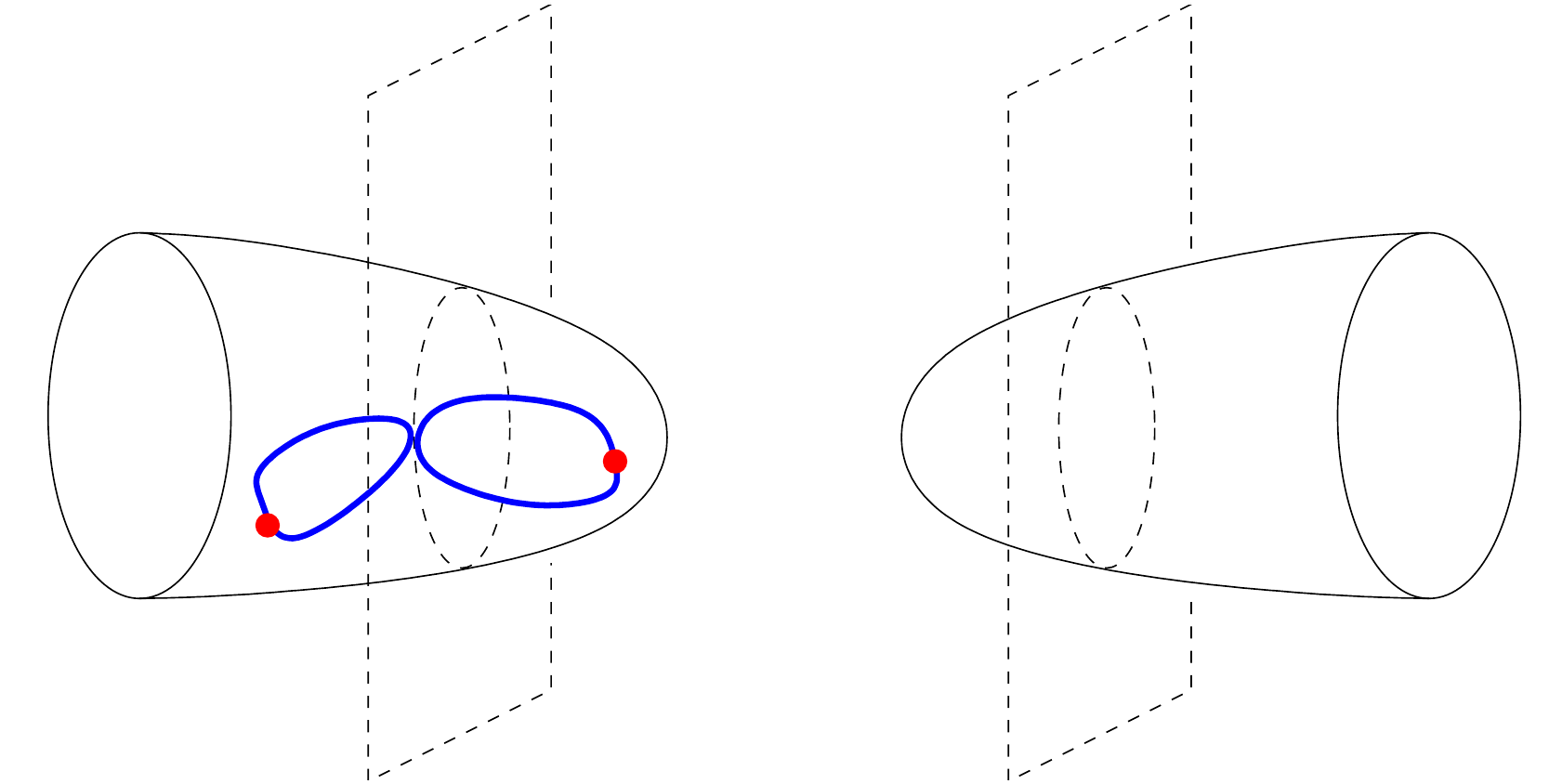}&
\includegraphics[height=2.5cm, angle=0]{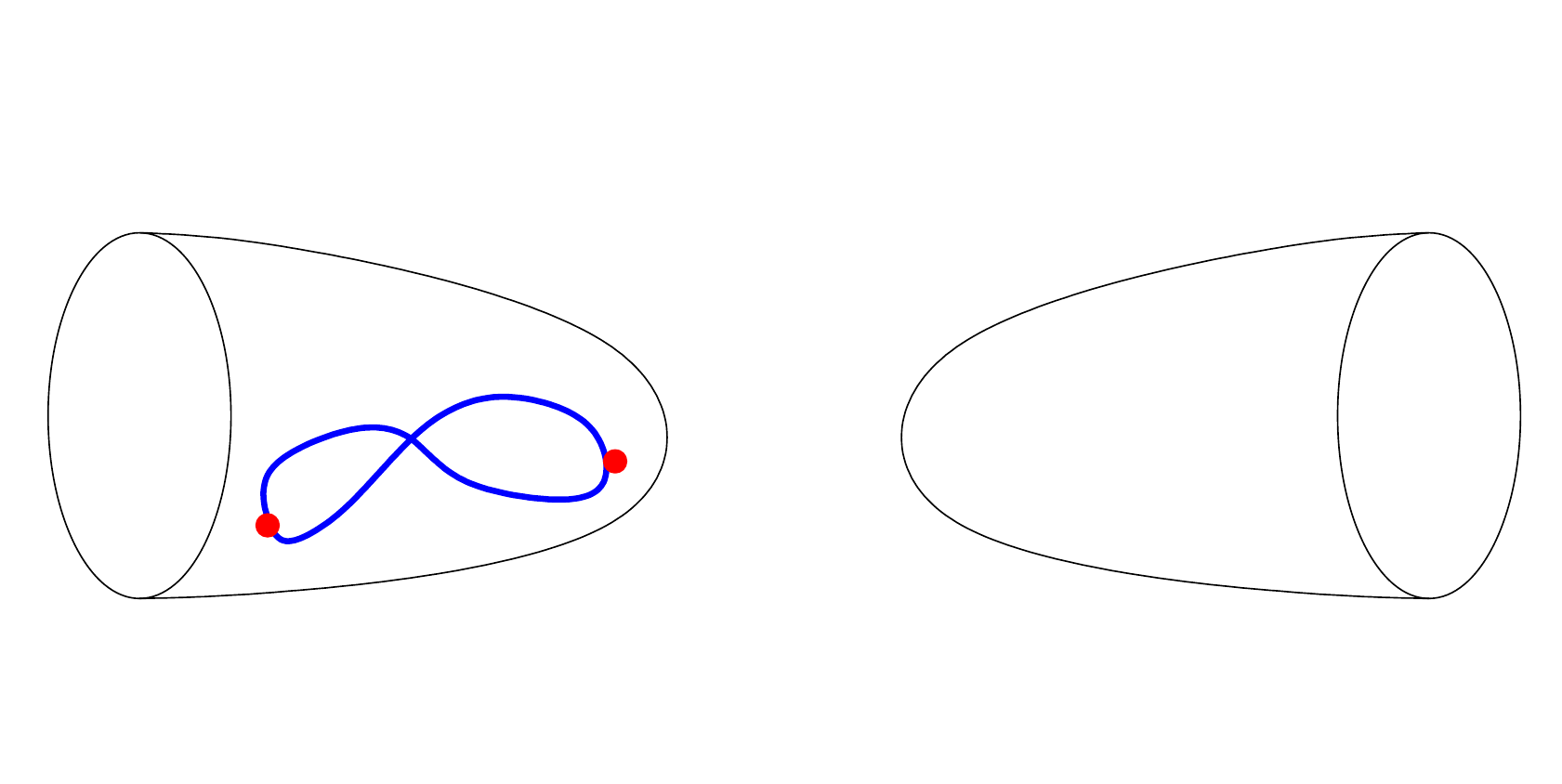}&
 \includegraphics[height=2.5cm, angle=0]{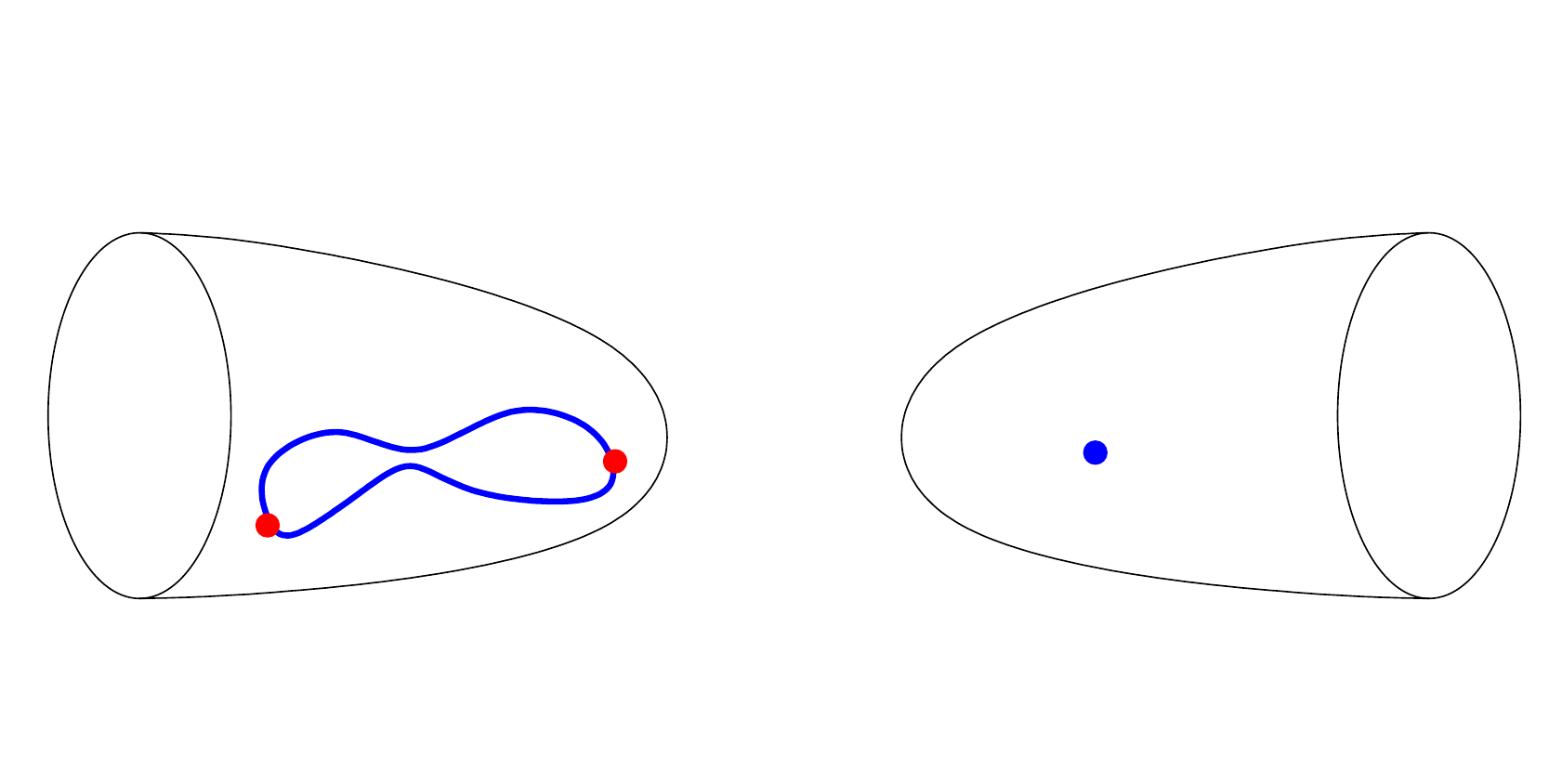}
\\ 
\end{tabular}
\caption{$\overline f(\overline C)$ and its two real deformations}
\label{fig:degen 2}
\end{figure}
\end{proof}

\begin{proof}[Proof of Theorem \ref{thm:W vanish}(2)]
Assume now that that $\x(0)\cap X_0=\{p_0\}$ and  $\x(0)\cap
\R X_1=\emptyset$.
According to the proof of Theorem \ref{thm:W vanish}(1), the only
maps $\overline f\in\CC(d,\x(0),J_0)$ with a non-trivial contribution
to $W_{X_\R,L,F}(d,s)$ satisfy $\overline f_{|\overline C'}\in \CC^{2e_1,
  0}(l_1+l_2,\{p_0\}\cup \x_E, J_0)$. In particular, the curve $C_1$
has two  irreducible components, which are exchanged by the real
structure on $\overline C$. There are $2^{\frac{c_1(X)\cdot d -4}{2}}$ ways of
distributing the points in $\x(0)\cap X_1$ among these two
components, which proves the result about divisibility of 
$W_{X_\R,L,F}(d,s)$.

Moreover $\overline C'$ is the only real irreducible component of $\overline
C$ and the map $\overline f_{|\overline C'}$ is an embedding. The
adjunction formula implies that
the number of  real solitary nodes of $\overline f(\overline C)$  has the same parity than $\frac{d^2-c_1(X)\cdot d+2}{2}$.
\end{proof}

\section{Real algebraic rational
  surfaces}\label{sec:real rational}

Here we deduce Theorem \ref{thm:real algebraic} from Theorem
\ref{thm:chain spheres} and the classification of
real rational algebraic surfaces (see for example \cite{Sil89,Kol97}).
The proof goes by explicit computations of homology groups and
direct application of Theorem \ref{thm:chain spheres}. 
Recall that any real algebraic minimal rational surface with a non
empty real part corresponds to  exactly one of the
following cases:
\begin{itemize}
\item $\C P^1\times \C P^1$  equipped with the real structure $\tau_{el}$;
\item $\C P^2$ equipped with the complex conjugation;
\item minimal conic bundles;
\item covering of degree 2 of $\C P^2$ ramified along a maximal real quartic;
\item covering of degree 2 of the quadratic cone in $\C P^3$ 
 ramified along a maximal real cubic section.
\end{itemize}
We treat all these  cases in Sections \ref{sec:H vanish},
\ref{sec:conic bundle}, \ref{sec:quartic}, and \ref{sec:cone}, and
prove Theorem \ref{thm:real algebraic} in Section \ref{sec:proof}.

\subsection{Generalities}
In this section, we fix once for all a
real 
rational 
symplectic $4$-manifold
 $X_\R=(X,\omega,\tau)$ and $L$
a connected component of $\R X$.
Since $(X, \omega)$ is diffeomorphic to either $\C P^1\times \C P^1$
or to  $\C P^2$ blown-up at finitely many points, 
all homology groups of
$X$ are known, and the intersection form on $H_2(X;\Z/2\Z)$ is non-degenerate.

\begin{lemma}\label{lem:MV}
The following hold:
\begin{itemize}
\item $b_2(X\setminus L;\Z/2\Z)=b_2(X;\Z/2\Z)+b_1(L)+
b_1(X\setminus L)-1;$
\item the group  $H_1(L;\Z/2\Z)$ is naturally isomorphic to the
kernel of the natural map $\iota:H_2(X\setminus L;\Z/2\Z)\to
H_2(X;\Z/2\Z)$;
\item  $b_1(X\setminus L)=0$ if $[L]\ne 0$ in $H_2(X;\Z/2\Z)$,
and $b_1(X\setminus L)=1$ otherwise.
\end{itemize}
\end{lemma}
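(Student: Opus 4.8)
The plan is to deduce all three statements from a single five-term exact sequence comparing $X$ with the complement of a tubular neighbourhood of $L$. Let $N$ be a closed tubular neighbourhood of $L$; it deformation retracts onto $L$, its boundary is the unit normal circle bundle $p\colon\partial N\to L$, and $X\setminus L$ is homotopy equivalent to $W=X\setminus\mathrm{int}\,N$. One may run Mayer--Vietoris for $X=N\cup W$ with $N\cap W=\partial N$; equivalently, and more efficiently, I would use the long exact sequence of the pair $(X,X\setminus L)$ together with excision $H_*(X,X\setminus L;\Z/2\Z)\cong H_*(N,\partial N;\Z/2\Z)$ and the mod-$2$ Thom isomorphism $H_k(N,\partial N;\Z/2\Z)\cong H_{k-2}(L;\Z/2\Z)$. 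The Thom isomorphism holds with $\Z/2\Z$ coefficients for any real vector bundle, so no orientability of $L$ or of its normal bundle is required --- this matters, since $L$ may well be non-orientable.

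Because $X$ is diffeomorphic to $\C P^1\times\C P^1$ or to a blow-up of $\C P^2$, it is simply connected, hence $H_1(X;\Z/2\Z)=0$ and, by mod-$2$ Poincar\'e duality, $H_3(X;\Z/2\Z)=0$. Feeding these vanishings and the Thom isomorphism into the pair sequence, the portion around degree two collapses to
\[
0\longrightarrow H_1(L;\Z/2\Z)\xrightarrow{\ \partial\ } H_2(X\setminus L;\Z/2\Z)\xrightarrow{\ \iota\ } H_2(X;\Z/2\Z)\xrightarrow{\ q\ } H_0(L;\Z/2\Z)\longrightarrow H_1(X\setminus L;\Z/2\Z)\longrightarrow 0,
\]
where $\iota$ is the map in the statement and $q$ is the composite $H_2(X;\Z/2\Z)\to H_2(X,X\setminus L;\Z/2\Z)\cong H_0(L;\Z/2\Z)$. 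The second bullet is then immediate: $\partial$ is injective (its preceding term $H_3(X;\Z/2\Z)$ vanishes), and its image is $\ker\iota$ by exactness, so $\partial$ is the asserted natural isomorphism $H_1(L;\Z/2\Z)\cong\ker\iota$. The first bullet is just the vanishing of the alternating sum of dimensions along this exact sequence, namely $b_1(L)-b_2(X\setminus L)+b_2(X)-1+b_1(X\setminus L)=0$ (using $\dim H_0(L;\Z/2\Z)=1$), which rearranges to the claimed formula.

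For the third bullet the one step requiring a genuine argument, rather than bookkeeping, is to identify $q$ geometrically. Representing a class $\gamma\in H_2(X;\Z/2\Z)$ by a surface transverse to $L$, its image in $H_2(X,X\setminus L;\Z/2\Z)$ is supported near $\gamma\cap L$, and the Thom isomorphism sends it to the mod-$2$ intersection number $\gamma\cdot[L]\in H_0(L;\Z/2\Z)=\Z/2\Z$; thus $q$ is ``intersection with $[L]$''. From the exact sequence, $H_1(X\setminus L;\Z/2\Z)\cong\mathrm{coker}\,q$, so $b_1(X\setminus L)=1$ when $q=0$ and $b_1(X\setminus L)=0$ when $q$ is surjective. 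If $[L]=0$ then $q=0$ and $b_1(X\setminus L)=1$; if $[L]\ne0$ then, since the mod-$2$ intersection form on $H_2(X;\Z/2\Z)$ is non-degenerate, some class meets $[L]$ oddly, $q$ is onto, and $b_1(X\setminus L)=0$. This is exactly the third bullet.

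I expect the identification of $q$ with the intersection pairing --- together with fixing the normalisation of the Thom isomorphism so that $q_4\colon H_4(X)\to H_2(L)$ sends $[X]$ to $[L]$ --- to be the only real obstacle; once it is in place, the three statements are formal consequences of exactness together with the non-degeneracy of the intersection form.
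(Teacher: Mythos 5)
Your proof is correct, but it packages the argument differently from the paper. The paper runs Mayer--Vietoris for $X=(X\setminus L)\cup U$, with $U$ a tubular neighbourhood of $L$, and then works directly with the boundary circle bundle $\psi\colon M\to L$: the first bullet uses Poincar\'e duality on the closed $3$-manifold $M$ (to get $b_2(M)=b_1(M)$), the second is extracted from the Gysin sequence via the injection $\kappa\colon H_1(L;\Z/2\Z)\hookrightarrow H_2(M;\Z/2\Z)$, and the third requires analysing the fibre class $\nu\in H_1(M;\Z/2\Z)$ --- including the degenerate case $\nu=0$, which occurs exactly when $L^2=1$ mod $2$ --- before computing $\partial([S])=([S]\cdot[L])\,\nu$ and invoking non-degeneracy of the intersection form. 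Your pair-sequence argument with excision and the mod-$2$ Thom isomorphism is an equivalent but leaner route: the five-term exact sequence delivers the first two bullets by pure bookkeeping (exactness plus an alternating-sum count), and the only genuinely geometric input is the standard identification of $q\colon H_2(X;\Z/2\Z)\to H_0(L;\Z/2\Z)$ with the mod-$2$ intersection pairing against $[L]$, after which the third bullet follows from non-degeneracy exactly as in the paper, with no case distinction on $L^2$ and no homology computation for $M$ at all. You were also right to insist that the $\Z/2\Z$ Thom isomorphism needs no orientability hypothesis, since $L$ may be non-orientable here. One point worth recording: your isomorphism $H_1(L;\Z/2\Z)\cong\ker\iota$ (boundary map composed with the Thom isomorphism) sends a loop $\gamma\subset L$ to the circle bundle $\psi^{-1}(\gamma)$ over it, which is precisely the representative the paper uses later in the proof of Proposition \ref{prop:vanish}; so your version of the second bullet is ``natural'' in the same sense and supports the subsequent applications unchanged.
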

\begin{proof}
Let $U$ be a tubular neighborhood of $L$ in
$X$ (in particular $U$ retracts to $L$). Since $X$ is simply connected, 
the Mayer-Vietoris sequence applied to $X= (X\setminus L)\cup U$
gives the exact sequence
$$0\xrightarrow[]{}  H_2(U\setminus L;\Z/2\Z)\xrightarrow[]{(i_2,j_2)}  
H_2(L;\Z/2\Z) \oplus H_2(X\setminus L;\Z/2\Z) 
\xrightarrow[]{}  H_2(X;\Z/2\Z)\xrightarrow[]{\partial}  $$
\begin{equation}\label{eq:MV}
\xrightarrow[]{\partial}
H_1(U\setminus L;\Z/2\Z)\xrightarrow[]{(i_1,j_1)}  
H_1(L;\Z/2\Z) \oplus H_1(X\setminus L;\Z/2\Z) 
\xrightarrow[]{}  0 .
\end{equation}
The space $U\setminus L$ retracts to an $S^1$-bundle $\psi:M\to L$ over $L$,
hence it follows 
from Poincaré duality that $b_2(U\setminus L;\Z/2\Z)=b_1(U\setminus
L;\Z/2\Z)$.
Together with the exact sequence $(\ref{eq:MV})$, this implies that
$$b_2(X\setminus L;\Z/2\Z)=b_2(X;\Z/2\Z)+b_1(L)+
b_1(X\setminus L)-1.$$
Each loop $\gamma$ in $L$ produces a surface $\psi^{-1}(\gamma)$ in $M$. By the Gysin
 sequence, this  induces an injective map
 $\kappa:H_1(L;\Z/2\Z)\hookrightarrow H_2(M;\Z/2\Z)$, and 
we have (intersection numbers are in $\Z/2\Z$)
$$b_2(M;\Z/2\Z)=b_1( L;\Z/2\Z)+1 - L^2.$$
The map $\psi_*:H_2(M;\Z/2\Z)\to H_2( L;\Z/2\Z)$ admits a section if and only if
 $L^2=0$. In this case the extra generator of $H_2(M;\Z/2\Z)$ is precisely
given by the image of such a section.
By definition of the Mayer-Vietoris sequence $(\ref{eq:MV})$, we
obtain that 
$$\mbox{Ker }\iota\simeq\mbox{Ker }i_{2}=\mbox{Im }\kappa\simeq  H_1(L;\Z/2\Z).$$

Analogously, the natural map  $\psi_*:H_1(M;\Z/2\Z)\rightarrow H_1(L;\Z/2\Z)$
is surjective with kernel generated by the class $\nu$ realized by
 a fiber of $\psi$, and $\nu=0$  if and only if
$L^2=1$. By definition of the Mayer-Vietoris sequence
 $(\ref{eq:MV})$, the same holds for the map $i_1$. We deduce
  that
$b_1(X\setminus L)=1-\mbox{rank } \partial$.
If $S$ is a closed surface in $X$ intersecting $L$ transversely in
finitely many points $p_1,\ldots,p_k$, we have
$$\partial( [S])= [\psi^{-1}(p_1)] +\ldots+ [\psi^{-1}(p_k)]=
([S]\cdot [L])\nu.   $$
Hence the map $\partial$ is null if and only if $[L]$ is in the kernel
of the intersection form on $H_2(X;\Z/2\Z)$. This intersection
form in non-degenerate, hence  the map $\partial$ is null if and only
if $[L]=0$. 
\end{proof}

 The consideration of the group $\mathcal H(X_\R, L)$ 
is justified by the  next two propositions.
\begin{prop}\label{prop:curve}
Let 
$\delta$ be a
$\tau$-invariant class in
$H_2(X\setminus L;\Z/2\Z)$  realized by a 
smooth
real symplectic
curve $E$. Assume in addition that $\delta^2\ge -1$ if $E$ is a sphere. Then for any $d\in H_2(X;\Z)$, we have 
$$W_{X_\R,L,F}(d,s)= (-1)^{\frac{d\cdot \delta}{2}}\ W_{X_\R,L,F+\delta}(d,s).$$
\end{prop}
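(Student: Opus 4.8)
The plan is to compare the two invariants $W_{X_\R,L,F}(d,s)$ and $W_{X_\R,L,F+\delta}(d,s)$ term by term, since both are computed from the \emph{same} finite set $\mathcal C(d,\x,J)$ and differ only in the exponents of the signs. From $m_{L,F+\delta}(C)=m(C)+\underline C\cdot(F+\delta)=m_{L,F}(C)+\underline C\cdot\delta$, each curve $C$ contributes to the second invariant a sign differing from its contribution to the first by the factor $(-1)^{\underline C\cdot\delta}$. So it suffices to show that, for a suitable $J$ and generic $\x$, one has $\underline C\cdot\delta=\frac{d\cdot[E]}{2}\pmod 2$ for \emph{every} $C\in\mathcal C(d,\x,J)$, where $[E]\in H_2(X;\Z)$ is the integral class of $E$ (so that $d\cdot[E]$ denotes the integer written $d\cdot\delta$ in the statement). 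This constant factor then pulls out of the sum, giving $W_{X_\R,L,F+\delta}(d,s)=(-1)^{\frac{d\cdot\delta}{2}}\,W_{X_\R,L,F}(d,s)$, which is equivalent to the claimed identity since the sign is its own inverse.

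First I would fix the almost complex structure. As $E$ is a smooth real symplectic curve, there is a $\tau$-compatible $J$ tamed by $\omega$ making $E$ $J$-holomorphic. The key point, and the only place where the hypothesis $\delta^2\ge -1$ (when $E$ is a sphere) enters, is that such a $J$ can be chosen generic enough for the enumeration defining $W_{X_\R,L,F}(d,s)$. Exactly as in the proof of Proposition \ref{decr}, this follows from \cite[Lemma 3.3.1]{McS}: the obstruction to achieving transversality while keeping $E$ holomorphic comes precisely from rational $J$-holomorphic spheres of self-intersection at most $-2$, and the assumption rules $E$ out as such a sphere (a curve $E$ of positive genus never produces this obstruction). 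For such generic $J$ and generic $\x$, the set $\mathcal C(d,\x,J)$ is finite and computes $W_{X_\R,L,F}(d,s)$, and every $C\in\mathcal C(d,\x,J)$ is immersed, nodal, distinct from $E$, and meets $E$ transversely.

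It then remains to compute $\underline C\cdot\delta$. Since $\delta\in H_2(X\setminus L;\Z/2\Z)$, the curve $E$ is disjoint from $L$, so $\R E\subset\R X\setminus L$, whereas the one-dimensional part $\widetilde{\R C}$ of $\R C$ lies in $L$. Hence any real intersection point of $C$ and $E$ would have to be an isolated real point of $\R C$ (a solitary node) lying on the fixed curve $\R E$; for generic $\x$ these finitely many points avoid $\R E$, so $\R C\cap\R E=\emptyset$. As $C$ and $E$ are distinct $J$-holomorphic curves, positivity of intersections yields exactly $d\cdot[E]$ intersection points, all transverse, none real, and none a node of $C$. Both being $\tau$-invariant, these points are exchanged in pairs by $\tau$, so $d\cdot[E]$ is even and each half of $C\setminus\widetilde{\R C}$ meets $E$ in exactly $\frac{d\cdot[E]}{2}$ of them. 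Representing $\underline C$ by such a half and $\delta$ by $E$, we get $\underline C\cdot\delta=\frac{d\cdot[E]}{2}\pmod 2$, independent of $C$, as required. I expect the genericity step of the second paragraph to be the main obstacle; once it is in place, the intersection count is forced by positivity of intersections.
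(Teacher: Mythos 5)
Your proposal is correct and follows essentially the same route as the paper's own proof: choose a $\tau$-compatible $J$ making $E$ holomorphic that is generic enough (this is where $\delta^2\ge -1$ for spherical $E$ enters, which the paper justifies by the degeneration arguments of Proposition \ref{prop:finite} rather than by your citation of \cite[Lemma 3.3.1]{McS} --- a cosmetic difference), then use positivity of intersections and the disjointness of $\R E$ from the real locus of the counted curves to see that $C\cap E$ consists of $d\cdot\delta$ transverse points split into $\tau$-conjugate pairs, whence $\underline C\cdot\delta=\frac{d\cdot\delta}{2}\bmod 2$ uniformly in $C$. Your term-by-term comparison of the two signed counts is exactly the paper's implicit mechanism, so no gap.
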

\begin{proof}
Choose a configuration $\x$ made of $c_1(X)\cdot d-1 -2s$ points in $L$ and  $s$ 
 pairs of $\tau$-conjugated 
points in $X\setminus \R X$.
Let $J_0$ be a generic $\tau$-compatible almost complex structure on $X$ such
that $E$ is $J_0$-holomorphic.
By the same arguments used in the proof of Proposition
\ref{prop:finite}, if $f:C\to X$ is a $J_0$-holomorphic map from a
nodal curve of arithmetic genus $0$, and such that $f_*[C]=d$ and
$\x\subset f(C)$, then $C$ is actually smooth and irreducible.
Furthermore  all intersection points of $f(C)$ and $E$ are positive, 
so the intersection $E\cap f(C)$ is made of $d\cdot \delta$
distinct points if $f(C)\not\subset E$.
If $f$ is in addition real 
and such
that $L$ contains $f(\R C)$, 
since  both curves $E$ and $f(C)$ are real with disjoint real parts, we have 
that 
$\underline{f(C)}\cdot \delta={\frac{d\cdot \delta}{2}}$, and this
equality is preserved modulo 2
under deformation of both $f$ and $J_0$.
\end{proof}

Assume now that $X_\R$ is a real algebraic rational surface.
The real part of a real  symplectic curve $C$ 
in $X$ defines a class $l_C$ in $H_1(X;\Z/2\Z)$.
It follows from the classification of real rational algebraic
surfaces that any class in $H_1(\R X;\Z/2\Z)$ is realizable
 by a real deformation of a real algebraic curve. 
If two real  cycles in $X$ intersect in finitely many points, 
the parity of this number 
 only depends on the classes realized by these cycles in 
$H_2(X;\Z/2\Z)$. Moreover the intersection form modulo 2 is
non-degenerated on $H_1(\R X;\Z/2\Z)$. Hence 
the class $l_C$ only depends on
$[C]\in H_2(X;\Z/2\Z)$, and we denote it by
$l_{[C]}$.

\begin{prop}\label{prop:vanish}
Let 
$\delta$ an element of $ Ker\ \iota\simeq   H_1(L;\Z/2\Z)$. 
Then for any $d\in H_2(X;\Z)$, we have 
$$W_{X_\R,L,F}(d,s)= (-1)^{\delta\cdot l_d}\ W_{X_\R,L,F+\delta}(d,s).$$
\end{prop}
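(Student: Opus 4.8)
The plan is to compare the two invariants curve by curve, following the template of the proof of Proposition \ref{prop:curve}, and to reduce everything to a single intersection computation inside $L$. Fix a generic $\tau$-compatible $J$ and a configuration $\x$ of $r$ points in $L$ and $s$ pairs of conjugate points, and recall that every $C\in\mathcal C(d,\x,J)$ has its one-dimensional real part $\widetilde{\R C}$ contained in $L$. Since $\delta\in\mathrm{Ker}\,\iota\subset H_2(X\setminus L;\Z/2\Z)$, the quantity $\underline C\cdot\delta$ is well defined for each curve, and by definition of the $F$-mass one has $m_{L,F+\delta}(C)=m(C)+\underline C\cdot(F+\delta)=m_{L,F}(C)+\underline C\cdot\delta$. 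I would first record the identity
$$W_{X_\R,L,F+\delta}(d,s)=\sum_{C\in\mathcal C(d,\x,J)}(-1)^{m_{L,F}(C)}\,(-1)^{\underline C\cdot\delta}.$$
The proposition then follows provided the extra sign $(-1)^{\underline C\cdot\delta}$ is the same for every curve occurring in the sum, equal to $(-1)^{\delta\cdot l_d}$: it can then be factored out, yielding $W_{X_\R,L,F+\delta}(d,s)=(-1)^{\delta\cdot l_d}W_{X_\R,L,F}(d,s)$, which is the claim since $(-1)^{\delta\cdot l_d}=\pm1$.

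The core of the argument is thus to establish that $\underline C\cdot\delta=\delta\cdot l_d\pmod 2$ for each $C$. Here I would exploit the explicit description of the isomorphism $\mathrm{Ker}\,\iota\simeq H_1(L;\Z/2\Z)$ furnished by Lemma \ref{lem:MV}: writing $U$ for a tubular neighbourhood of $L$ and $\psi:M\to L$ for the associated normal $S^1$-bundle $\partial U$, the class $\delta$ is represented by $\psi^{-1}(\gamma)$, where $\gamma$ is a loop in $L$ realizing the corresponding class in $H_1(L;\Z/2\Z)$. As $\psi^{-1}(\gamma)\subset X\setminus L$, this is an admissible representative for the pairing of $H_2(X,L;\Z/2\Z)$ with $H_2(X\setminus L;\Z/2\Z)$. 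The computation is then local along $\widetilde{\R C}$: in $\tau$-real coordinates with $L=\R^2$, $X=\C^2$, and $C$ the complexification of a real line, the half $\underline C$ is a half-plane with boundary the corresponding arc of $\widetilde{\R C}$, while $\psi^{-1}(\gamma)$ is the radius-$\varepsilon$ normal circle bundle over $\gamma$. A point of $\underline C\cap\psi^{-1}(\gamma)$ occurs precisely where $\gamma$ crosses $\widetilde{\R C}$ inside $L$, each crossing giving one transverse intersection point in $X$. After perturbing $\gamma$ to avoid the finitely many nodes and solitary points of $\R C$, this identifies $\underline C\cdot\delta$ with the mod-$2$ intersection number $\gamma\cdot\widetilde{\R C}$ of one-cycles in the surface $L$; the two halves of $C\setminus\widetilde{\R C}$ give the same count (at heights $\pm\varepsilon$), consistently with the well-definedness of $\underline C\cdot\delta$.

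Finally, I would invoke the fact recalled just before the statement that $l_C=[\widetilde{\R C}]\in H_1(\R X;\Z/2\Z)$ depends only on $[C]=d$; since $\widetilde{\R C}\subset L$, the intersection $\gamma\cdot\widetilde{\R C}$ equals $\delta\cdot l_d$ and is therefore independent of the chosen curve $C$. Substituting into the displayed sum gives $W_{X_\R,L,F+\delta}(d,s)=(-1)^{\delta\cdot l_d}W_{X_\R,L,F}(d,s)$, hence the result. I expect the main obstacle to be the intersection computation of the second paragraph: one must verify carefully that $\psi^{-1}(\gamma)$ faithfully represents $\delta$ away from $L$ and that the local model correctly captures the global pairing $\underline C\cdot\delta$, so that it indeed reduces to the honest intersection of $\gamma$ with $\widetilde{\R C}$ in $L$.
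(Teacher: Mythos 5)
Your proof is correct and takes essentially the same route as the paper: there too, $\delta$ is represented by the restriction $\gamma'=\psi^{-1}(\gamma)$ of the boundary of a small tubular neighborhood of $L$ over a loop $\gamma\subset L$, and one observes that all intersections of $\gamma'$ with $C$ localize near $\gamma\cap\R C$, each transverse crossing yielding a pair of $\tau$-conjugated points split between the two halves, whence $\underline C\cdot\delta=\delta\cdot l_d \pmod 2$ for every curve in the count. Your write-up merely makes the local model and the factoring out of the common sign $(-1)^{\delta\cdot l_d}$ more explicit than the paper's brief version.
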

\begin{proof}
Let $C$ be a real  symplectic curve in $X$.
Recall that $\delta$ can be represented by the restriction over a loop
$\gamma$ of the boundary of a tubular neighborhood of $L$ in
$X$. We denote by $\gamma'$ this representative of $\delta$.
Without loss of generality, we may further assume that
$\gamma$ intersect $\R C$ transversely  and in finitely many points.
Note that the tubular neighborhood of $L$ in $X$ can be chosen as
small as needed.
In particular, all intersection
points of $\gamma'$ and $C$ are located in a neighborhood of $\R
C\cap\gamma$, and each such point corresponds to a pair of
$\tau$-conjugated points of $\gamma'\cap C$.
\end{proof}

\subsection{Surfaces with $\mathcal H(X_\R, L) =0$}\label{sec:H vanish}
We start by giving the list of real algebraic minimal rational
surfaces whose group  $\mathcal H(X_\R, L)$ vanishes. There are
exactly four of them.

\begin{lemma}\label{lem:H vanish}
If $X_\R$ is either $(\C P^1\times \C P^1,\tau_{el})$, or
$(\C P^2,conj)$, or a minimal conic bundle with 
a connected real part, then 
$\mathcal H(X_\R, L) =0$.
\end{lemma}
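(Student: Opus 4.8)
The plan is to unwind the definition of $\mathcal{H}(X_\R,L)$ and then realize every relevant homology class by an explicit curve. Recall that $\mathcal{H}(X_\R,L)$ is the group of $\tau$-invariant classes in $H_2(X\setminus L;\Z/2\Z)$ modulo the subgroup $\mathcal{G}$, and that $\mathcal{G}$ already contains $\ker\iota$, where $\iota\colon H_2(X\setminus L;\Z/2\Z)\to H_2(X;\Z/2\Z)$ is the natural map. First I would observe that $\ker\iota$ consists of $\tau$-invariant classes: by Lemma \ref{lem:MV} it is isomorphic to $H_1(L;\Z/2\Z)$ and generated by the surfaces $\psi^{-1}(\gamma)$ lying over loops $\gamma\subset L$, which are $\tau$-invariant as sets since $\tau$ fixes $L$ pointwise. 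Consequently $\iota$ induces an injection of $\bigl(\tau\text{-invariant classes}\bigr)/\ker\iota$ into the $\tau$-invariant part of $H_2(X;\Z/2\Z)$, and $\mathcal{H}(X_\R,L)=0$ is equivalent to the assertion that every class in this image equals a sum of classes realized by smooth real symplectic curves disjoint from $L$ of positive genus or self-intersection at least $-1$. Throughout I would use Lemma \ref{lem:MV} to compute $b_2(X\setminus L)$ and $\dim\ker\iota$, and Propositions \ref{prop:curve} and \ref{prop:vanish} to guarantee that such curve classes do lie in $\mathcal{G}$.

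For $(\C P^2,\mathrm{conj})$ the computation is immediate. Here $L=\R P^2$, so $[L]\neq 0$ in $H_2(\C P^2;\Z/2\Z)$ (its self-intersection is $\chi(\R P^2)\equiv 1$), whence $b_1(X\setminus L)=0$ and Lemma \ref{lem:MV} gives $b_2(X\setminus L)=1=\dim\ker\iota$. Thus $\ker\iota$ already equals all of $H_2(X\setminus L;\Z/2\Z)$, so $\mathcal{G}$ exhausts the $\tau$-invariant classes and $\mathcal{H}(X_\R,L)=0$.

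For $(\C P^1\times\C P^1,\tau_{el})$ the real part $L$ is the sphere $\R X_{el}$, so $H_1(L;\Z/2\Z)=0$ and $\ker\iota=0$. Since $[L]=l_1+l_2\neq 0$, Lemma \ref{lem:MV} gives $b_2(X\setminus L)=1$, and the nonzero class maps isomorphically onto the unique nonzero $\tau_{el}$-invariant class $l_1+l_2$. I would realize this class by the $\tau_{el}$-invariant $(1,1)$-curve $E=\{zw=-1\}$: it is a smooth rational curve of self-intersection $2\geq -1$, it is real, and its real part is empty (the induced equation $|z|^2=-1$ has no solution), so $E\cap L=\R E=\emptyset$ and $E$ is disjoint from $L$. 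Hence its class generates $H_2(X\setminus L;\Z/2\Z)$ and lies in $\mathcal{G}$, giving $\mathcal{H}(X_\R,L)=0$.

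The real obstacle is the conic bundle case. Writing $\pi\colon X\to\C P^1$ for the real conic bundle structure, the real part $\R X$ fibers over $\R(\C P^1)\cong S^1$ with fibers the real loci of the conics, each a circle, a point (at a real singular fiber), or empty. I would first determine the topology of the connected real part from this fibration and the configuration of real singular fibers, and then exhibit, for a suitable real base point, a smooth real fiber $F\cong\C P^1$ with $\R F=\emptyset$; such an $F$ satisfies $F\cap L=\R F=\emptyset$, is real symplectic, and has $[F]^2=0\geq -1$, so $[F]\in\mathcal{G}$. Combining $[F]$, the kernel $\ker\iota$, and if necessary the class of a disjoint real section or of a conjugate pair of components of a singular fiber, I would check against the dimension count of Lemma \ref{lem:MV} and the intersection form that these realizable classes span all $\tau$-invariant classes of $H_2(X\setminus L;\Z/2\Z)$. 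The delicate points, and the main difficulty, are to identify precisely the connected real part of each admissible conic bundle and to verify that no $\tau$-invariant class of the complement escapes the span of these curve classes.
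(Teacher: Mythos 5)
Your treatment of the first two cases is correct and complete: for $(\C P^2,\mathrm{conj})$ the dimension count from Lemma \ref{lem:MV} indeed gives $b_2(X\setminus L;\Z/2\Z)=1=\dim\ker\iota$, so $\mathcal G$ already contains everything, and for $(\C P^1\times\C P^1,\tau_{el})$ your explicit curve $\{zw=-1\}$ is exactly the generator the paper invokes (a real curve in the class $l_1+l_2$ with empty real part). The general reduction via $\iota$ is also sound. The problem is the third case, which you yourself flag as ``the main difficulty'' and then leave as a plan rather than a proof: you never determine which minimal conic bundles actually have connected real part (by the classification there are exactly two, namely $(\C P^1\times\C P^1,\tau_{hy})$ with $\R X=S^1\times S^1$, and one with $\R X=S^2$), and your key device fails outright for the first of them. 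For $\tau_{hy}$ there is \emph{no} real fiber with empty real part: every fiber over a real point of the base meets $\R X=S^1\times S^1$ in a circle, and fibers over non-real points are not real. So the curve $F$ you propose to put into $\mathcal G$ does not exist in precisely the case where most of the work lies.

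What is actually needed there --- and what the paper does --- is a computation of the $\tau$-invariant subgroup of $H_2(X\setminus L;\Z/2\Z)$, not a supply of curve classes. For $\tau_{hy}$ one has $H_2(X\setminus \R X;\Z/2\Z)\simeq \ker\iota\times H_2(X;\Z/2\Z)$ with $\dim\ker\iota=2$, and one shows that the $\tau$-invariant classes are exactly $\ker\iota$: every class of $H_2(X;\Z/2\Z)$ is realized by a nonsingular real rational curve with nonempty real part and pairs nontrivially with some such class, while a $\tau$-invariant class supported in $X\setminus\R X$ meets any such real curve in an even number of points (the two halves of the curve are exchanged by $\tau$), hence dies under $\iota$. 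Since $\ker\iota\subset\mathcal G$ by definition, no realizable curve class is needed at all. Your proposed ``check against the dimension count of Lemma \ref{lem:MV} and the intersection form'' does not contain this argument, nor the analogue for the $\R X=S^2$ conic bundle, where the paper instead computes a full basis together with the $\tau_*$-action ($\tau_*[B]=[B]+n[F]+c_1(X)$, $\tau_*[E_2]=[E_2]+[F]$, as in Lemma \ref{lem:homology conic}, applied with $n=1$) to identify the invariant subgroup before concluding. In short: cases one and two are proved, but for the conic bundles you have a strategy whose central tool is unavailable in one of the two relevant cases, and the decisive step --- computing the $\tau$-invariant classes of the complement --- is missing.
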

\begin{proof}
One computes easily that $H_2((\C P^1\times \C P^1)\setminus
S^2;\Z/2\Z)$ (resp. $H_2(\C P^2\setminus
\R P^2;\Z/2\Z)$)
is generated by the class realized by a real algebraic curve in the
class $l_1+l_2$ (resp. a real  conic) with an empty real part.
There exist two minimal conic bundles with a connected real part,
namely $X_\R=(\C P^1\times \C P^1,\tau_{hy})$, and a minimal conic bundle
with $\R X=S^2$. This latter case is covered by
Section
\ref{sec:conic bundle}, so assume that $X_\R=(\C P^1\times \C P^1,\tau_{hy})$.
By Lemma \ref{lem:MV}, we have that
$b_2(X\setminus \R X;\Z/2\Z)= b_2(X;\Z/2\Z)+2$, and that
the kernel of the
 natural map $\iota:H_2(X\setminus \R X;\Z/2\Z)\to
H_2(X;\Z/2\Z)$ is of dimension 2. Hence 
$H_2(X\setminus
\R X;\Z/2\Z)$ is  isomorphic to 
 $\mbox{Ker} \ \iota\times H_2(X;\Z/2\Z)$.
Any class in $H_2(X;\Z/2\Z)$ is realized by a non-singular real rational algebraic curve
with a non-empty real part,
and has intersection number 1 with some other class in
$H_2(X;\Z/2\Z)$. This implies that $\mbox{Ker} \ \iota$ is the set of
$\tau_{hyp}$-invariant classes in $H_2(X\setminus \R X;\Z/2\Z)$,
and the lemma is proved.
\end{proof}

\subsection{Minimal conic bundles}\label{sec:conic bundle}
Let  $(X,\tau)$  be a minimal conic bundle whose real part is
made of $n\ge 1$ spheres. 
Up to real deformation, we may assume that $X$ has the following
affine equation in $\C^3$:
$$y^2+z^2=\prod_{i=1}^{2n}(x- a_i)$$
where $a_1<a_2\ldots<a_{2n}$ are distinct real numbers, and 
$\tau$ is the restriction of the complex conjugation on $\C^3$.
Forgetting the
$(y,z)$-coordinates provides a real projection  $\rho:X\to \C P^1$.
Given $i=1,\ldots n$, we
 denote by $S_{2i-1}$ (resp. $S_{2i}$) the Lagrangian sphere
 $\rho^{-1}([a_{2i-2};a_{2i-1}])\cap 
\R^3$ (resp. $\rho^{-1}([a_{2i-1};a_{2i}])\cap 
\R\times (i\R)^2$), with
  the obvious convention that $a_{2n+1}=a_0$, see Figure \ref{fig:conic}.
 \begin{figure}[h]
\centering
\begin{tabular}{c}
\includegraphics[height=2.6cm, angle=0]{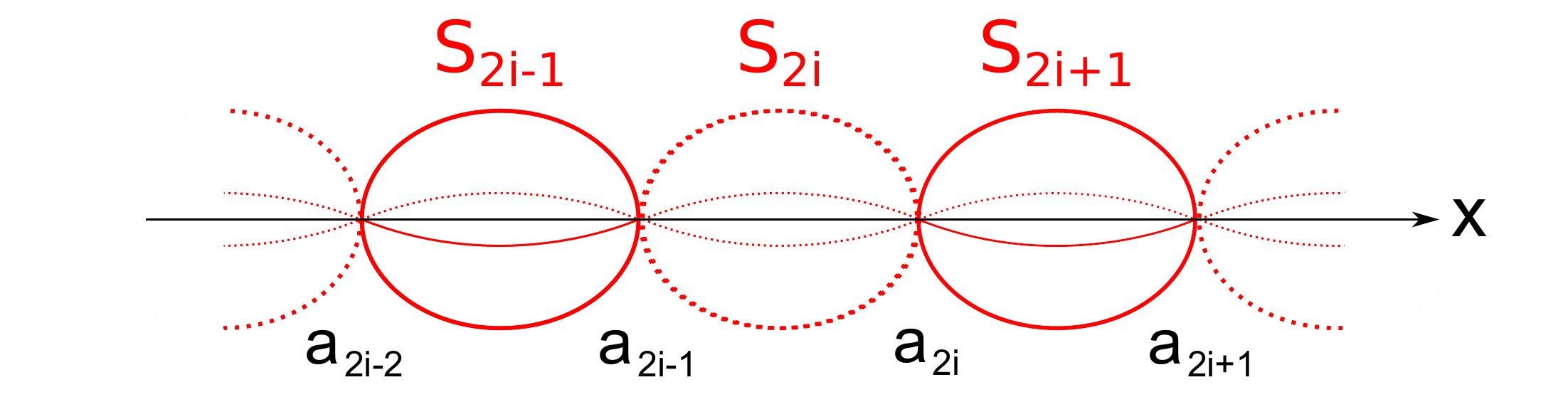}
\end{tabular}
\caption{Real vanishing cycles of Conic bundles}
\label{fig:conic}
\end{figure}
We also denote by  $F$ a generic fiber,
by $E_2$ an
irreducible component  of the singular fiber
$\rho^{-1}(a_2)$, and by $B$ a (non-real) section of $\rho$ which does
not intersect the curve $E_2$.
The real Picard group of $X$ is the free abelian group generated by
$F$ and $c_1(X)$ (see \cite{Sil89,Kol97}).
\begin{lemma}\label{lem:homology conic}
A basis of $\mathcal H(X_\R,S_1)$ is given by
$([S_3],\ldots,[S_{2n-1}])$.
\end{lemma}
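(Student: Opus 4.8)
The plan is to describe the $\tau$-invariant part of $H_2(X\setminus S_1;\Z/2\Z)$ completely, and then to locate inside $\mathcal G$ the two classes that have to be quotiented out, leaving the $[S_j]$.

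I begin with the mod-$2$ homology of $X$. Writing $\rho^{-1}(a_j)=E_{a_j}\cup E_{a_j}'$ for the two components of the $j$-th singular fibre, one has $E_{a_j}+E_{a_j}'=F$, $\tau_*E_{a_j}=E_{a_j}'=F+E_{a_j}$, and $\tau_*F=F$; moreover the vanishing sphere over the arc bounded by $a_{j-1}$ and $a_j$ satisfies $[S_j]=E_{a_{j-1}}+E_{a_j}$ (indices read cyclically, $E_{a_0}=E_{a_{2n}}$). The $E_{a_j}$ are disjoint $(-1)$-spheres, so the $[S_j]$ form a cyclic chain of $(-2)$-classes: modulo $2$ one has $[S_j]^2=0$, $[S_j]\cdot[S_{j\pm1}]=1$, and $[S_j]\cdot[S_k]=0$ otherwise. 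Taking $\{B,F,E_{a_1},\dots,E_{a_{2n}}\}$ as a $\Z/2\Z$-basis, the involution reads $\tau_*E_{a_j}=F+E_{a_j}$, $\tau_*F=F$, $\tau_*B=B+\sum_jE_{a_j}+nF$, so the $\tau$-invariant subspace is $\{\epsilon F+\sum_jt_jE_{a_j}:\sum_jt_j\equiv 0\}$.

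Since $S_1\simeq S^2$ we have $H_1(S_1;\Z/2\Z)=0$, so by Lemma \ref{lem:MV} the map $\iota\colon H_2(X\setminus S_1;\Z/2\Z)\to H_2(X;\Z/2\Z)$ is injective with image $[S_1]^\perp$. Intersecting the invariants above with $[S_1]^\perp=\{t_1=t_{2n}\}$ produces a subspace of dimension $2n-1$, for which I would write down the explicit basis
$$\{\,F,\ [S_1],\ [S_3],[S_4],\dots,[S_{2n-1}]\,\},$$
the $2n-3$ classes $[S_3],\dots,[S_{2n-1}]$ spanning the even-weight combinations of $E_{a_2},\dots,E_{a_{2n-1}}$, while $[S_1]=E_{a_1}+E_{a_{2n}}$ and $F$ are independent of them. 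Thus $\mathcal H(X_\R,S_1)$ is the quotient of this $(2n-1)$-dimensional space by $\mathcal G$, and it remains to show that modulo $\mathcal G$ exactly $F$ and $[S_1]$ die. Here I would use the two generators $F$ and $c_1(X)$ of the real Picard group. The fibre over a real point of an interval on which $\prod_i(x-a_i)<0$ is a smooth real rational curve of self-intersection $0$ with empty real part, disjoint from $S_1$, so $F\in\mathcal G$. For the second generator a direct computation gives $c_1(X)\equiv\sum_jE_{a_j}=[S_1]+[S_3]+[S_5]+\cdots+[S_{2n-1}]$ modulo $2$ and modulo $F$; realizing $c_1(X)\bmod 2$ by a real curve of positive genus disjoint from $S_1$ (an elliptic anticanonical curve when $-K_X$ is effective, or a suitable real curve in a class congruent to $c_1(X)$ otherwise) puts this class in $\mathcal G$ as well. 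These two relations kill $F$ and impose $[S_1]\equiv[S_3]+[S_5]+\cdots+[S_{2n-1}]$, so $\mathcal H(X_\R,S_1)$ is generated by $[S_3],\dots,[S_{2n-1}]$.

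The main obstacle is the reverse inclusion: proving that these $2n-3$ classes stay linearly independent in the quotient, equivalently that $\mathcal G$ meets the $(2n-1)$-dimensional invariant space in exactly $\langle F,c_1(X)\rangle$. The delicate point is that formally adding an even class such as $2[B]$ turns any element of $[S_1]^\perp$ into one of positive genus, so the only thing that bounds $\mathcal G$ from above is reality together with disjointness from $S_1$: the real part of an admissible curve must avoid the component $S_1$, i.e. be imaginary over the arc carrying $S_1$. I would encode this by a $\Z/2\Z$-linear form on the invariant space, built from the mod-$2$ linking of a class with the chain of spheres (equivalently from a potential on $E_{a_2}-\cdots-E_{a_{2n-1}}$), and prove that it vanishes on the class of every smooth real symplectic curve of positive genus or of self-intersection $\ge -1$ disjoint from $S_1$, while separating the $[S_j]$. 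Establishing this realizability constraint is the heart of the argument; once it is in place, combining it with Propositions \ref{prop:curve} and \ref{prop:vanish} yields that $[S_3],\dots,[S_{2n-1}]$ is a basis of $\mathcal H(X_\R,S_1)$.
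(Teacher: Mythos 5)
Your linear algebra is correct and is essentially a change of coordinates on the paper's computation: the paper works with the basis $(c_1(X),[B],[F],[E_2],[S_2],\ldots,[S_{2n-1}])$ of $H_2(X;\Z/2\Z)$, uses Lemma \ref{lem:MV} exactly as you do to drop to $H_2(X\setminus S_1;\Z/2\Z)$, and from $\tau_*[B]=[B]+n[F]+c_1(X)$, $\tau_*[E_2]=[E_2]+[F]$ gets the same $(2n-1)$-dimensional invariant subspace, with $c_1(X)$ playing the role of your $[S_1]$ (indeed $c_1(X)\equiv [S_1]+[S_3]+[S_5]+\cdots+[S_{2n-1}]$ modulo $F$, as you note). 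Your verification that $[F]\in\mathcal G$ via a fibre over a point of an interval where $\prod_{i=1}^{2n}(x-a_i)<0$ is also fine. The proof then breaks in the quotient step, in two places. First, $c_1(X)\in\mathcal G$ is asserted but not established: ``an elliptic anticanonical curve when $-K_X$ is effective, or a suitable real curve \ldots\ otherwise'' covers at best small $n$, and the fallback is not a construction. A curve that works for every $n$ is the bisection $\{z=p(x)\}$, i.e.\ $y^2=\prod_{i=1}^{2n}(x-a_i)-p(x)^2$, with $p$ chosen so that the right-hand side is negative on all of $\R$: it is real, smooth of genus $n-1$ for generic $p$, has empty real part (hence is disjoint from $S_1$), and meets each singular fibre in one point of each component, so its mod $2$ class is $\sum_j E_{a_j}\equiv c_1(X)$ modulo $\langle F\rangle$. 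Second, and more seriously, the independence of $[S_3],\ldots,[S_{2n-1}]$ in the quotient --- which you yourself flag as ``the heart of the argument'' --- is left unproved, and the mechanism you propose cannot work as stated: a single $\Z/2\Z$-linear form cuts out a hyperplane, whereas you need $\mathcal G$ to meet a $(2n-1)$-dimensional space in the $2$-dimensional subspace $\langle [F],c_1(X)\rangle$, i.e.\ $2n-3$ independent constraints.

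The missing idea is the fact the paper records in the sentence immediately preceding the lemma: the real Picard group of $X$ is the free abelian group generated by $F$ and $c_1(X)$. Since $\tau$ is antisymplectic, any smooth real symplectic curve $C$ satisfies $\tau_*[C]=-[C]$ in $H_2(X;\Z)$, so $[C]$ lies in this rank-two anti-invariant lattice; reducing mod $2$, every generator of $\mathcal G$ (recall $\mathrm{Ker}\,\iota=0$ here since $H_1(S_1;\Z/2\Z)=0$) lies in $\langle [F],c_1(X)\rangle$. In your coordinates an element of $\langle [F],c_1(X)\rangle$ is either a multiple of $F$ or has $[S_1]$-coefficient $1$, so no non-zero combination of $[S_3],\ldots,[S_{2n-1}]$ belongs to $\mathcal G$, and independence in $\mathcal H(X_\R,S_1)$ follows at once. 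This also resolves your puzzle about $2[B]$: adding an even class may raise the genus of a class, but it destroys anti-invariance of the integral class, so such a class carries no real curve at all --- reality is indeed the constraint, and it is captured by the lattice statement rather than by a linking-type linear form. Note that you actually invoke the real Picard group for the spanning direction; using the same fact for the upper bound on $\mathcal G$ is what closes the proof, and is how the paper's (terse) argument is meant to be read.
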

\begin{proof}
We have the following intersection products in $H_2(X;\Z/2\Z)$:
$$[S_i]\cdot [S_j]=0 \ \mbox{if} \ |i-j|\ne 1,  \quad [S_i]\cdot
[S_{i+1}]=1, \quad [S_i]\cdot [E_2]=0  \ \mbox{if} \ i\ne 2,3,  \quad [S_i]\cdot
[E_2]=1 \ \mbox{if} \ i= 2,3,$$
$$ c_1(X)\cdot[E_2]= 1, \quad  c_1(X)^2=[B]\cdot c_1(X)= [F]\cdot
c_1(X)= [S_i]\cdot c_1(X)=0,$$
and
$$
 [F]^2=[F]\cdot [S_i]= [F]\cdot [E_2]= [B]\cdot [E_2]= [B]\cdot [S_i]=[B]^2=0,
\quad [E_2]^2=1,\quad [B]\cdot [F]=1. $$
In particular
$(c_1(X),[B],[F],[E_2],[S_2]\ldots,[S_{2n-1}])$ is a free family of
$H_2(X;\Z/2\Z)$, and hence is a basis
since $b_2(X; \Z/2\Z)=2n+2$.

From the intersection $[S_1]\cdot [S_2]=1$, we deduce that $[S_1]\ne 0$
in $H_2(X;\Z/2\Z)$, and Lemma \ref{lem:MV} implies that
$b_2(X\setminus
L;\Z/2\Z)=b_2(X;\Z/2\Z)-1$.
A basis of
 $H_2(X\setminus L;\Z/2\Z)$ is then given by
$$(c_1(X),[B],[F],[E_2],[S_3],\ldots,[S_{2n-1}]),$$
since its rank in $H_2(X;\Z/2\Z)$ is at most its rank in $H_2(X\setminus L;\Z/2\Z)$.
The classes
$c_1(X),[F]$, and $[S_i]$
 are $\tau$-invariant,
and
 we have:
$$\tau_*[B]=[B]+n[F]+c_1(X),\quad \mbox{and}\quad \tau_*[E_2]=[E_2]+F. $$
It follows that $(c_1(X),[F],[S_3],\ldots,[S_{2n-1}])$ is 
a basis of  
the subspace of $\tau$-invariant classes of $H_2(X\setminus
L;\Z/2\Z)$, and the lemma is proved.
\end{proof}

\subsection{Minimal real Del Pezzo surface of degree 2}\label{sec:quartic}
Let $Q$ be the real quartic in $\C P^2$ 
whose  real part together its position with respect to a
bitangent $H$
is depicted in Figure \ref{fig:quartic}a. We denote by $(X,\tau)$ 
the real double covering $\rho: X\to \C P^2$ ramified along $Q$, whose real
part consists of four spheres.
The real Picard group of $X$ is the free abelian group generated by
$c_1(X)$ (see \cite{Sil89,Kol97}).

There exists a $(-1)$-curve $E$
such that $\rho(E)=H$. 
Let $S_1,S_3,S_5,$ and $S_7$ by the four spheres of $\R X$.
By the rigid isotopy classification of real plane quartics, each pair
of real spheres is connected by a $\tau$-invariant 
vanishing Lagrangian sphere. Let
$S_2$ (resp. $S_4$,  $S_6$) such a sphere connecting $S_1$ and $S_3$ 
(resp.  $S_3$ and $S_5$, $S_5$ and $S_7$) as depicted in  Figure
\ref{fig:quartic}b.
 \begin{figure}[h]
\centering
\begin{tabular}{ccc}
\includegraphics[height=5cm,  angle=0]{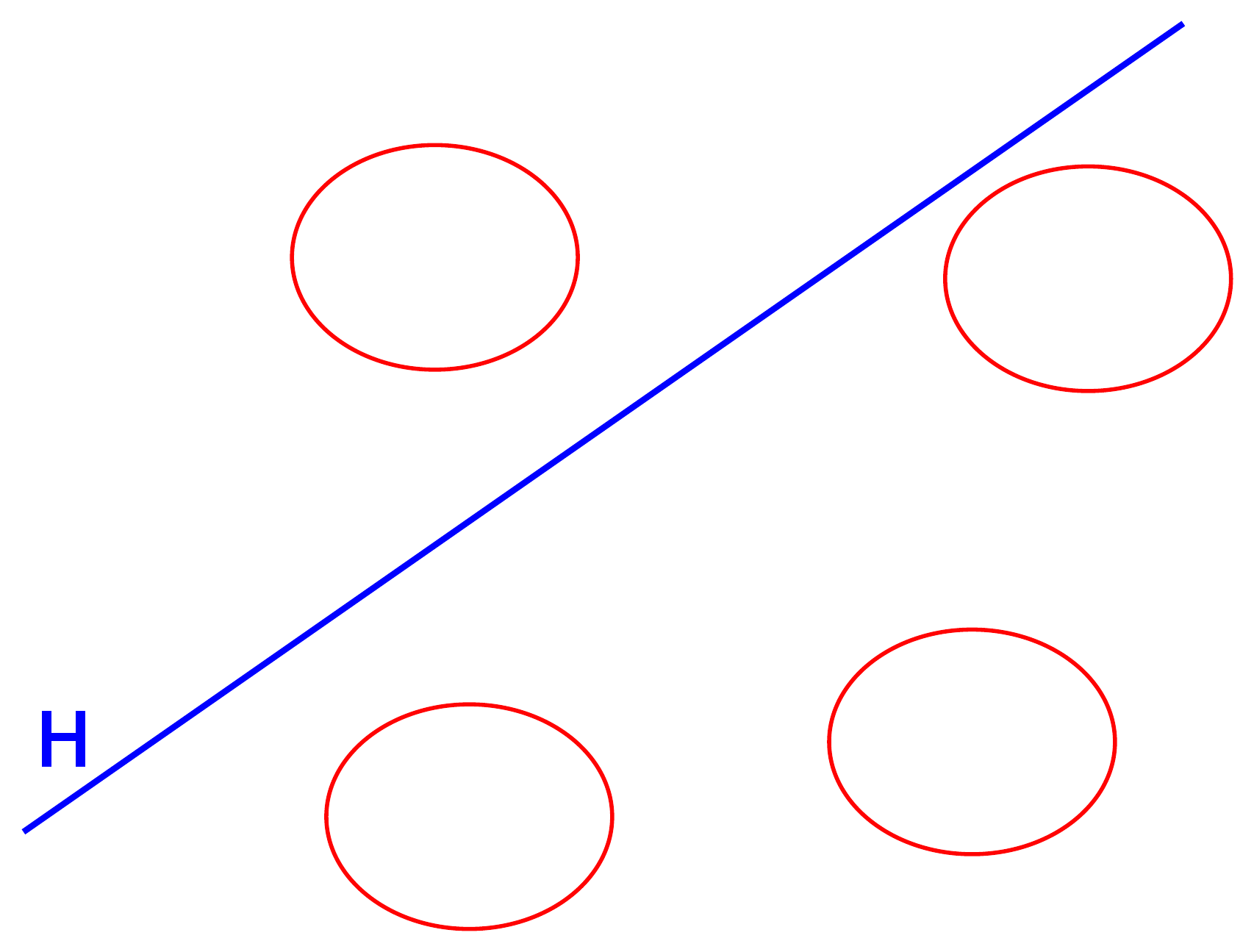}& \hspace{3ex} &
\includegraphics[height=5cm,  angle=0]{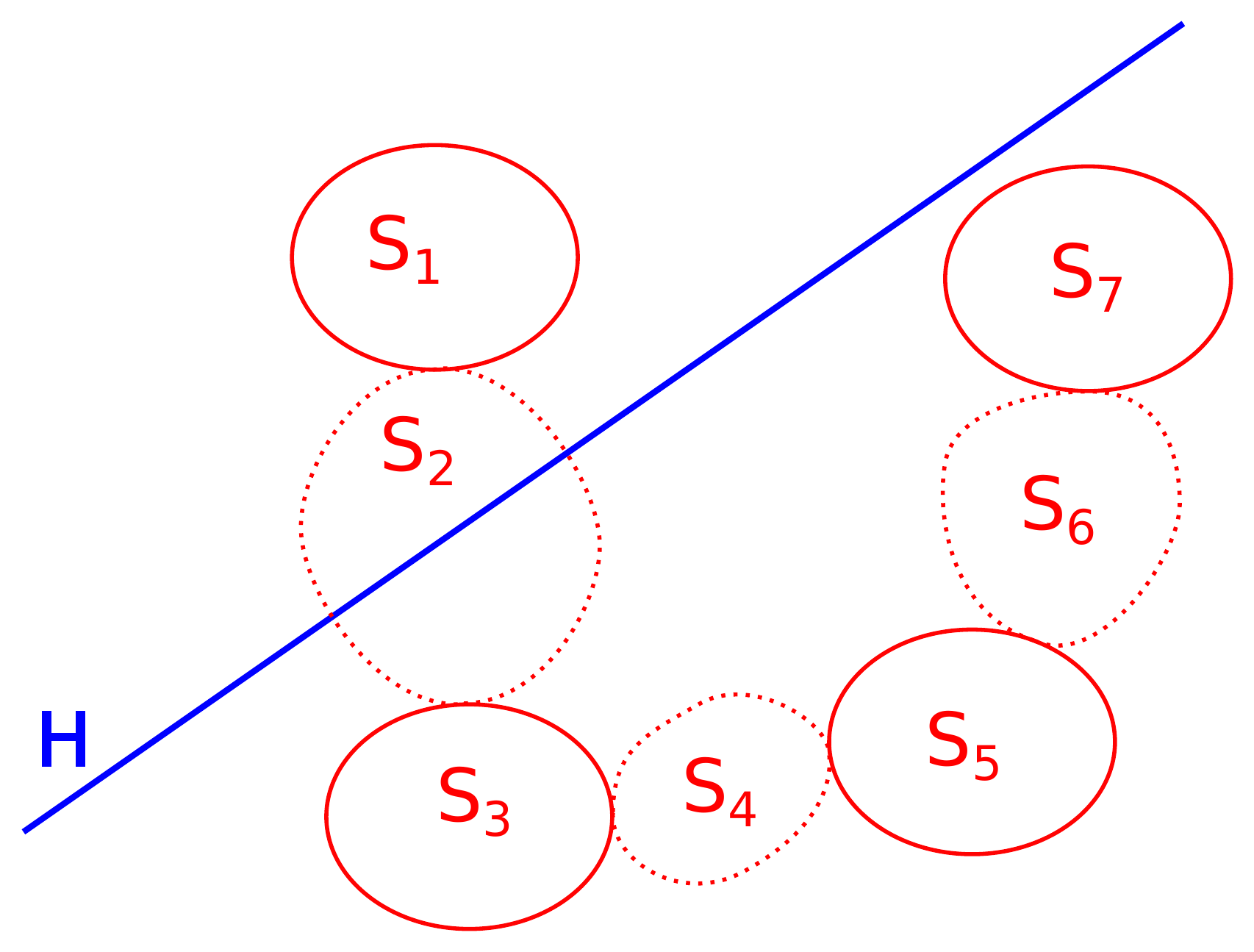}
\\ \\ a) && b)
\end{tabular}
\caption{Real vanishing cycles of a minimal real Del Pezzo surface of
  degree $2$}
\label{fig:quartic}
\end{figure}
\begin{lemma}\label{lem:homology quartic}
A basis of $\mathcal H(X_\R,S_1)$ is given by
$([S_3],\ldots,[S_{7}])$.
\end{lemma}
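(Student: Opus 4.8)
The plan is to mimic the proof of Lemma \ref{lem:homology conic}: record the relevant mod $2$ intersection numbers, exhibit a basis of $H_2(X;\Z/2\Z)$, cut down to $H_2(X\setminus S_1;\Z/2\Z)$ by Lemma \ref{lem:MV}, isolate the $\tau$-invariant part, and finally quotient by $\mathcal G$.

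First I would record the intersection form. Each $S_i$ is a Lagrangian sphere, so $[S_i]^2=0$ and $c_1(X)\cdot[S_i]=0$ in $\Z/2\Z$, while $[S_i]\cdot[S_{i+1}]=1$ and $[S_i]\cdot[S_j]=0$ for $|i-j|\ge 2$; since $X$ has degree $2$ one has $c_1(X)^2=0$, and the $(-1)$-curve gives $[E]^2=1$ and $c_1(X)\cdot[E]=1$. As $X$ is $\C P^2$ blown up at seven points, $b_2(X;\Z/2\Z)=8$. The adjacency form of the chain $[S_1],\dots,[S_6]$ is nondegenerate over $\Z/2\Z$, and $c_1(X)$ pairs nontrivially only with $[E]$, so one checks that $(c_1(X),[E],[S_1],\dots,[S_6])$ is a basis. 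Moreover the class of the real locus satisfies $[S_1]+[S_3]+[S_5]+[S_7]=c_1(X)$ in $H_2(X;\Z/2\Z)$, which expresses $[S_7]$ in this basis.

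Next I would determine the $\tau$-invariant subspace. Every $\tau$-invariant cycle has $\tau$-invariant class mod $2$, so $c_1(X)$ and all $[S_i]$ are invariant. On the other hand, since the real Picard group equals $\Z\,c_1(X)$ the $(-1)$-curve $E$ cannot be real, so $\tau$ exchanges $E$ with the second component $E'$ of $\rho^{-1}(H)$; as $[E]+[E']=c_1(X)$ this gives $\tau_*[E]=c_1(X)+[E]\ne[E]$. Hence the invariant subspace is exactly $\langle c_1(X),[S_1],\dots,[S_6]\rangle$, of dimension $7$. Since $[S_1]\ne 0$ (because $[S_1]\cdot[S_2]=1$) and $S_1$ is a sphere, Lemma \ref{lem:MV} gives $\mathrm{Ker}\,\iota=0$ and identifies $H_2(X\setminus S_1;\Z/2\Z)$ with $[S_1]^\perp$. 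Intersecting the invariant subspace with $[S_1]^\perp$ drops only the $[S_2]$-direction, so the $\tau$-invariant classes of $H_2(X\setminus S_1;\Z/2\Z)$ form the $6$-dimensional space $\langle c_1(X),[S_3],[S_4],[S_5],[S_6],[S_7]\rangle$, after using the relation above to trade $[S_1]$ for $[S_7]$.

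Finally I would quotient by $\mathcal G$. Because $X$ is rational, every smooth real symplectic curve $C$ represents a class in the $(-1)$-eigenlattice of $\tau_*$, which here is the real Picard group $\Z\,c_1(X)$; thus $[C]\in\{0,c_1(X)\}$ in $H_2(X;\Z/2\Z)$, and together with $\mathrm{Ker}\,\iota=0$ this shows $\mathcal G=\langle c_1(X)\rangle$, the class $c_1(X)$ itself lying in $\mathcal G$ via a smooth real anticanonical (hence genus one) curve. Dividing out $c_1(X)$ then leaves $([S_3],[S_4],[S_5],[S_6],[S_7])$ as a basis of $\mathcal H(X_\R,S_1)$. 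The only non-formal inputs, and the points I expect to be delicate, are the identity $[\R X]=c_1(X)$ mod $2$ and the rank-one statement for the real Picard group: it is precisely the latter, coming from the classification of minimal real del Pezzo surfaces, that collapses all ``positive'' curve classes to $c_1(X)$ and thereby guarantees that the $(-2)$-classes $[S_3],\dots,[S_7]$ survive independently in the quotient.
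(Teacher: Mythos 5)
Your overall route coincides with the paper's: record the mod $2$ intersection numbers, exhibit a basis of $H_2(X;\Z/2\Z)$, pass to $H_2(X\setminus S_1;\Z/2\Z)$ via Lemma \ref{lem:MV}, take $\tau$-invariants, and quotient by $\mathcal G$. You are in fact more explicit than the paper on the last step: the paper stops once it has the basis $(c_1(X),[S_3],\ldots,[S_7])$ of the $\tau$-invariant subspace of $H_2(X\setminus L;\Z/2\Z)$, whereas you justify $\mathcal G=\langle c_1(X)\rangle$ via the rank-one real Picard group (equivalently, any smooth real symplectic curve has class in the $(-1)$-eigenlattice $\Z\,c_1(X)$, since $\tau$ reverses its orientation) together with a real anticanonical curve; that is a worthwhile addition, though you should also note that the anticanonical curve can be chosen disjoint from $S_1$ --- e.g.\ $\rho^{-1}(\ell)$ for a real line $\ell$ avoiding the four ovals of the quartic, which has empty real part. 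Your derivation of $\tau_*[E]=c_1(X)+[E]$ from minimality and $[E]+[E']=c_1(X)$ is also a nice justification of what the paper merely asserts.

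There is, however, one false identity in your argument: $[S_1]+[S_3]+[S_5]+[S_7]=c_1(X)$ in $H_2(X;\Z/2\Z)$. Using the full intersection data (including $[S_2]\cdot[E]=1$ and $[S_i]\cdot[E]=0$ for $i\ne 2$, which you omit but the paper records), the class $v=[S_1]+[S_3]+[S_5]+[S_7]$ pairs to zero with every element of the basis $(c_1(X),[E],[S_2],\ldots,[S_7])$, so nondegeneracy of the mod $2$ intersection form forces $v=0$; on the other hand $c_1(X)\ne 0$ since $c_1(X)\cdot[E]=1$. Hence here $[\R X]=0$ and the correct relation is $[S_7]=[S_1]+[S_3]+[S_5]$. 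The heuristic $[\R X]\equiv c_1(X) \pmod 2$ is not a general fact: it holds for the degree $1$ surface of Section \ref{sec:cone}, where the paper obtains $[N]=c_1(X)+[S_1]+[S_3]+[S_5]+[S_7]$, but it fails here, as it fails for $(\C P^1\times\C P^1,\tau_{el})$; the correct general constraint is $[\R X]\cdot x=x\cdot\tau_*x$ for all $x$, which in the present case annihilates the whole basis. You flagged precisely this identity as delicate, and rightly so. Fortunately the error is self-healing: you only use the relation to trade $[S_1]$ for $[S_7]$ inside the span $\langle c_1(X),[S_1],[S_3],[S_4],[S_5],[S_6]\rangle$, which already contains $c_1(X)$, so the resulting invariant subspace $\langle c_1(X),[S_3],\ldots,[S_7]\rangle$ is the same for either relation, and in the final quotient $c_1(X)\equiv 0$ anyway. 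With the relation corrected, your proof is complete and agrees with the paper's.
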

\begin{proof}
We have the following intersection products in $H_2(X;\Z/2\Z)$:
$$[S_i]\cdot [S_j]=0 \ \mbox{if} \ |i-j|\ne 1,  \quad [S_i]\cdot
[S_{i+1}]=1, \quad [S_i]\cdot [E]=0  \ \mbox{if} \ i\ne 2,  \quad [S_2]\cdot
[E]=1,$$ 
$$ c_1(X)\cdot[E]= 1, \quad  c_1(X)^2= [S_i]\cdot c_1(X)=0,\quad
 \mbox{and}\quad [E]^2=1.$$ 
In particular $(c_1(X),[E],[S_2],\ldots,[S_7])$ is a basis of
 $H_2(X;\Z/2\Z)$, and
 $(c_1(X),[E],[S_3],\ldots,[S_7])$  is a basis of
 $H_2(X\setminus L;\Z/2\Z)$.
The classes
$c_1(X)$ and $[S_i]$
 are $\tau$-invariant,
and
 $\tau_*[E]=c_1(X)+[E]$.
Hence $(c_1(X),[S_3],\ldots,[S_{2n-1}])$ is 
a basis of  
the subspace of $\tau$-invariant classes of $H_2(X\setminus
L;\Z/2\Z)$, and the lemma is proved.
\end{proof}

\subsection{Minimal real Del Pezzo surface of degree 1}\label{sec:cone}
Let $Q$ be the real cubic section of the quadratic cone $\Sigma$ in
$\C P^3$ whose  real part together with its position with respect to a
tritangent hyperplane section $H$ is
depicted in figure \ref{fig:cone}a. We denote by $(X,\tau)$ 
the real double covering $\rho: X\to \Sigma$ ramified along $Q$
 whose real
part consists of four spheres and a real projective plane.
The real Picard group of $X$ is the free abelian group generated by
$c_1(X)$ (see \cite{Sil89,Kol97}).
 \begin{figure}[h]
\centering
\begin{tabular}{ccc}
\includegraphics[height=5cm,  angle=0]{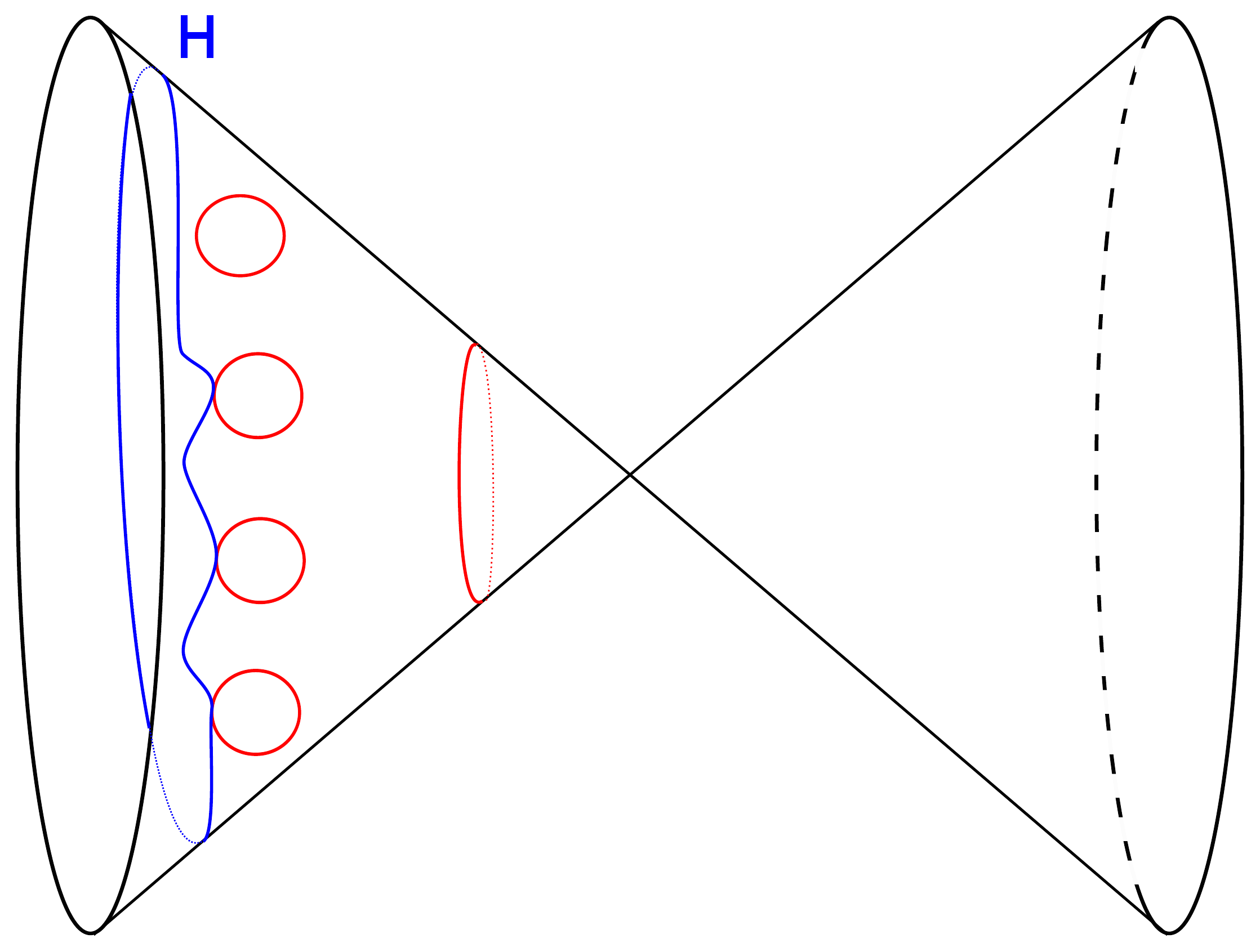}& \hspace{3ex} &
\includegraphics[height=5cm,  angle=0]{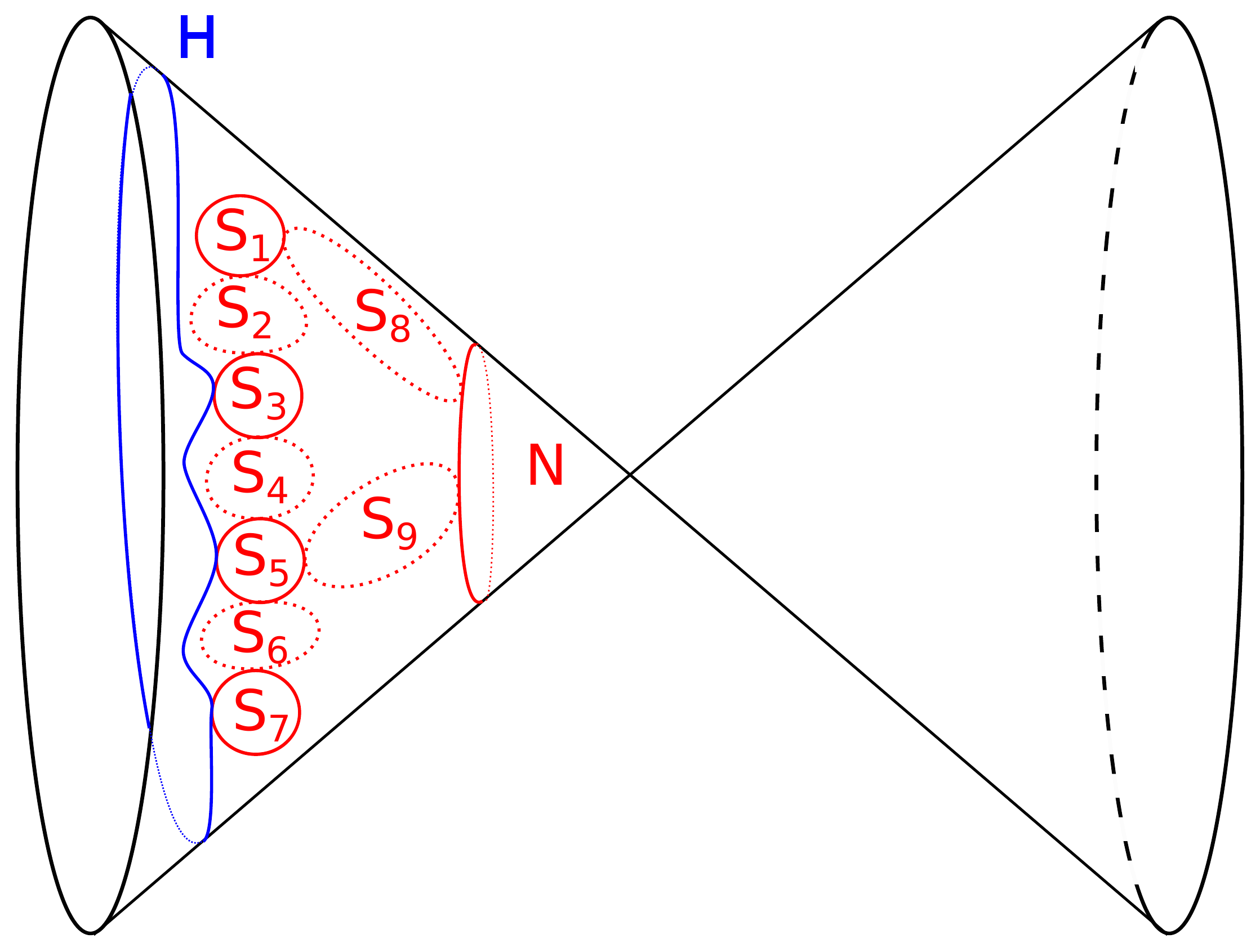}
\\ \\ a) && b)
\end{tabular}
\caption{Real vanishing cycles of a minimal real Del Pezzo surface of
  degree $1$}
\label{fig:cone}
\end{figure}

There exists a $(-1)$-curve $E$
such that $\rho(E)=H$. 
Let $S_1,S_3,S_5,S_7$ and $N$ be respectively
 the four spheres and the real projective plane  of $\R X$.
By the rigid isotopy classification of  real cubic sections of
$\Sigma$, there exist $\tau$-invariant vanishing Lagrangian spheres
$S_2,S_4,S_6,S_8,S_9$ as depicted in Figure \ref{fig:cone}b.
Note that $\tau$ acts trivially on $H_2(X;\Z/2\Z)$.
\begin{lemma}\label{lem:homology cone}
A basis of $\mathcal H(X_\R,S_1)$ is given by
$([S_3],\ldots,[S_{7}],[S_9],[N])$.

A basis of $\mathcal H(X_\R,S_7)$ is given by
$([S_1],\ldots,[S_{5}],[S_8],[N])$.

A basis of $\mathcal H(X_\R,N)$ is given by
$([S_1],\ldots,[S_{7}])$.
\end{lemma}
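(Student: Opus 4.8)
The plan is to follow the pattern of Lemmas \ref{lem:homology conic} and \ref{lem:homology quartic}: record the intersection form carried by the configuration of Figure \ref{fig:cone}(b), extract a basis of $H_2(X;\Z/2\Z)$, and then feed each of the three components $L\in\{S_1,S_7,N\}$ into Lemma \ref{lem:MV}. Reading the mutual positions of $S_1,\ldots,S_9$ and $N$ off the figure, the nonzero products should be the ``cyclic chain'' ones: $[S_i]\cdot[S_{i+1}]=1$ for $1\le i\le 7$, together with $[S_8]\cdot[N]=[N]\cdot[S_9]=[S_9]\cdot[S_1]=1$, the self-intersections $[S_i]^2=0$ (Lagrangian $(-2)$-spheres) and $[N]^2=1$ (Lagrangian $\R P^2$), and products of $E$ and $c_1(X)$ with these classes as in the degree-$2$ case. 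The fundamental class $[\R X]=[S_1]+[S_3]+[S_5]+[S_7]+[N]$ of the real locus equals $c_1(X)$ in $H_2(X;\Z/2\Z)$, giving the relation I will use throughout. Since $X$ is a blow-up of $\C P^2$ at eight points, $b_2(X;\Z/2\Z)=9$, and a suitable nine-element subfamily of $(c_1(X),[E],[S_1],\ldots,[S_9],[N])$ is a basis.

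A simplification absent from Lemma \ref{lem:homology quartic} is that here $\tau$ acts trivially on $H_2(X;\Z/2\Z)$. Hence every class of $H_2(X\setminus L;\Z/2\Z)$ carried by a real cycle is $\tau$-invariant; as the classes $[S_i]$, $[N]$, $c_1(X)$ and the generator of $\mathrm{Ker}\,\iota$ are all represented by $\tau$-invariant cycles and will be seen to span, $\tau$ acts trivially on $H_2(X\setminus L;\Z/2\Z)$ as well, so that $\mathcal H(X_\R,L)=H_2(X\setminus L;\Z/2\Z)/\mathcal G$ in each case. For $L=S_1,S_7$ (spheres) one has $b_1(L)=0$, whence $\mathrm{Ker}\,\iota=0$ and $b_2(X\setminus L;\Z/2\Z)=b_2(X)-1=8$; for $L=N$ one has $b_1(N;\Z/2\Z)=1$ and $[N]\ne 0$, whence $\mathrm{Ker}\,\iota\simeq\Z/2\Z$ and $b_2(X\setminus N;\Z/2\Z)=9$. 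In each case I will display the asserted family as an independent system of the right corank, reducing the remaining generators (in particular $[S_8]$, $[S_9]$, $[E]$) to it by means of the relation above.

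The decisive step is the determination of $\mathcal G$, via Propositions \ref{prop:curve} and \ref{prop:vanish}. I emphasize that the Lagrangian spheres $S_i$ carry $(-2)$-classes, of genus $0$ and self-intersection $-2<-1$, so they do \emph{not} contribute to $\mathcal G$; in particular every $[S_i]$ survives in the quotient, which is exactly what keeps the even-indexed classes in the stated bases. For $L=S_1$ and $L=S_7$ the only contribution is the anticanonical class $c_1(X)=-K_X$: a smooth real anticanonical curve is elliptic of self-intersection $c_1(X)^2=1$, and since $c_1(X)\cdot[S_1]=c_1(X)\cdot[S_7]=0$ (by the relation and the cyclic table) it has a representative disjoint from $L$; thus $\mathcal G=\langle c_1(X)\rangle$ is one-dimensional, and the quotient is precisely $([S_3],\ldots,[S_7],[S_9],[N])$, respectively $([S_1],\ldots,[S_5],[S_8],[N])$.

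The case $L=N$ is where the main obstacle lies. Because $c_1(X)\cdot[N]=[N]^2=1\ne 0$, no anticanonical curve can be made disjoint from $N$, so $c_1(X)\notin\mathcal G$ this time. One generator of $\mathcal G$ is the generator of $\mathrm{Ker}\,\iota\simeq H_1(N;\Z/2\Z)$, which by Proposition \ref{prop:vanish} only changes $W$ by a sign; the second must be produced as the class of an explicit real curve of genus $\ge 1$ or self-intersection $\ge -1$ lying in $[N]^{\perp}=\mathrm{Im}\,\iota$ and disjoint from $N$ --- for instance a real $(-1)$-curve in the class $c_1(X)+[S_8]$ (which lies in $[N]^{\perp}$ since $(c_1(X)+[S_8])\cdot[N]=1+1=0$ and, by adjunction, has self-intersection $c_1(X)^2-2=-1$), or the $(-1)$-curve $E$ itself should the splitting $\rho^{-1}(H)=E\cup\tau(E)$ produce a real component disjoint from $N$. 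Exhibiting such a curve from the rigid isotopy type of the cubic section $Q$ and the geometry of $\rho\colon X\to\Sigma$, and then checking that modding out by $\mathrm{Ker}\,\iota$ and this class collapses $H_2(X\setminus N;\Z/2\Z)$ onto exactly $([S_1],\ldots,[S_7])$ --- so that $[S_8]$ and $[S_9]$ become dependent while $[S_1],\ldots,[S_7]$ remain independent --- is the delicate point, and the only place where the concrete double-cover geometry is really needed.
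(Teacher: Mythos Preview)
Your argument for $L=S_1$ and $L=S_7$ is along the right lines and matches the paper's (terse) treatment: $\iota$ is injective there, so triviality of $\tau_*$ on $H_2(X;\Z/2\Z)$ does force triviality on $H_2(X\setminus L;\Z/2\Z)$, and quotienting the $8$-dimensional space by $\langle c_1(X)\rangle$ gives the claimed $7$ classes. (Your guessed ``cyclic chain'' intersection pattern is not quite the one encoded in Figure~\ref{fig:cone}(b); the paper records $[S_9]=[S_8]+[S_2]+[S_4]$ and $[N]=c_1(X)+[S_1]+[S_3]+[S_5]+[S_7]$, which you should use instead.)

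The real gap is in the case $L=N$. Your claim that $\tau$ acts trivially on $H_2(X\setminus N;\Z/2\Z)$ is false, and this is exactly what makes you hunt for a nonexistent second generator of $\mathcal G$. The point is that $\mathrm{Ker}\,\iota\simeq\Z/2\Z$ is nontrivial here, so triviality downstairs does \emph{not} propagate upstairs. Concretely, take $B=\rho^{-1}(A)$ for a hyperplane section $A$ of the cone; then $[B]$ generates $\mathrm{Ker}\,\iota$, and the $(-1)$-curve $E\subset\rho^{-1}(H)$ satisfies $[E]+\tau_*[E]=[B]\ne 0$ in $H_2(X\setminus N;\Z/2\Z)$. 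Thus $([B],[E],[S_1],\ldots,[S_7])$ is a basis of the $9$-dimensional $H_2(X\setminus N;\Z/2\Z)$, and the $\tau$-invariant subspace is only $8$-dimensional, spanned by $([B],[S_1],\ldots,[S_7])$. Modding out by $\langle[B]\rangle\subset\mathcal G$ already yields the asserted $7$-dimensional $\mathcal H(X_\R,N)$.

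Two further symptoms of the same mistake: your proposed $\tau$-invariant spanning set for $H_2(X\setminus N;\Z/2\Z)$ contains $[N]$ and $c_1(X)$, but neither even lifts to $H_2(X\setminus N;\Z/2\Z)$, since $[N]^2=1$ and $c_1(X)\cdot[N]=1$ place both outside $\mathrm{Im}\,\iota=[N]^{\perp}$. And your candidate for a second $\mathcal G$-generator, a real $(-1)$-curve or $E$ itself, cannot work: $E$ is not real (that is precisely the content of $\tau_*[E]\ne[E]$), and no extra relation is needed once the correct dimension of the $\tau$-invariant part is in hand.
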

\begin{proof}
All intersection products of  $[S_i]$ with $[S_j]$, $[N]$, and $[E]$,  
 can be read
on  Figure \ref{fig:cone}b. The other intersection products in
$H_2(X;\Z/2\Z)$
are:
$$  c_1(X)^2=[E]^2=[N]^2=[E]\cdot c_1(X)=[N]\cdot c_1(X) =1,  \quad
 [N]\cdot [E]=0.$$
Hence $(c_1(X),[S_1],\ldots,[S_8])$ is a basis of
 $H_2(X;\Z/2\Z)$, and we have
$$[N]=c_1(C)+ [S_1]+ [S_3]+ [S_5]+ [S_7], \quad\mbox{and}\quad
[S_9]=[S_8]+[S_2]+[S_4].$$
The result about  $\mathcal H(X_\R,S_1)$ and  $\mathcal H(X_\R,S_7)$
follows immediately.

The generator of $Ker \ \iota$ can be represented by $B=\rho^{-1}(A)$,
where $A$ is a hyperplane section of $\Sigma$.
Hence
 $([B],[E],[S_1],\ldots,[S_7])$  is a basis of
 $H_2(X\setminus N;\Z/2\Z)$.
Since $[E]+\tau_*[E]=B$, we get that
 $([B],[S_1],\ldots,[S_{7}])$ is 
a basis of  
the subspace of $\tau$-invariant classes of $H_2(X\setminus
N;\Z/2\Z)$, and the lemma is proved.
\end{proof}

\subsection{Proof of Theorem \ref{thm:real algebraic}}\label{sec:proof}

Lemmas \ref{lem:H vanish}, \ref{lem:homology conic}, \ref{lem:homology
  quartic}, and 
\ref{lem:homology cone} and Theorem \ref{thm:chain spheres} provide a
proof of Theorem \ref{thm:real algebraic}
in the case of minimal real algebraic rational surfaces, and when
$F=[\R X\setminus L]$.
To end the proof, we start with the following remark:
if $(\widetilde X,\widetilde \tau)$ is a blow up of $(X,\tau)$ at a real
point or at a pair of $\tau$-conjugated points, and if $\widetilde L$
is the component of $\R \widetilde X$ corresponding to $L$, 
then there is a
natural injective group homomorphism $\phi:\mathcal H(X_\R,L)\to
\mathcal H(\widetilde X_\R, \widetilde L)$. 
Theorem \ref{thm:real algebraic} now
 follows immediately from next proposition.

\begin{prop}\label{prop:isomorphism H}
The map $\phi$ is an isomorphism.
\end{prop}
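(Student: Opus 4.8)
The plan is to use the injectivity of $\phi$, which is already granted, so that it suffices to establish surjectivity. Write $V=H_2(X\setminus L;\Z/2\Z)$ and $\widetilde V=H_2(\widetilde X\setminus\widetilde L;\Z/2\Z)$, with $\tau$- and $\widetilde\tau$-invariant subspaces $V^\tau$ and $\widetilde V^{\widetilde\tau}$. Since everything is a $\Z/2\Z$-vector space, surjectivity of $\phi$ amounts to showing that every class of $\widetilde V^{\widetilde\tau}$ is congruent modulo $\widetilde{\mathcal G}$ to the image of a class of $V^\tau$ under the natural pullback $\pi^*$ induced by the blow-down $\pi:\widetilde X\to X$. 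I would split the argument into three geometric situations: a blow-up at a pair of conjugated points, a blow-up at a real point lying on a component $L'\neq L$, and a blow-up at a real point lying on $L$ itself.

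In the first two situations the blow-up locus is disjoint from $L$, so $\widetilde X\setminus\widetilde L$ is genuinely $X\setminus L$ blown up, whence $\widetilde V=V\oplus(\Z/2\Z)^2$ (conjugated pair) or $\widetilde V=V\oplus\Z/2\Z$ (real point off $L$), the extra summand being spanned by the classes of the exceptional divisors. Passing to invariants, the only new invariant class is $[\mathcal E]+[\tau\mathcal E]$ (respectively $[\mathcal E]$), where $\mathcal E$ denotes the exceptional divisor. For a real point off $L$ the class $[\mathcal E]$ is realized by the exceptional $(-1)$-curve, a smooth real symplectic curve of self-intersection $-1$ disjoint from $L$, hence it lies in $\widetilde{\mathcal G}$. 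For a conjugated pair $\{p,\tau(p)\}$ I would choose a smooth real symplectic curve $A\subset X$ of positive genus, disjoint from $L$, passing transversally through $p$ (hence, being real, through $\tau(p)$ as well); then its strict transform $\widetilde A$ and a generic real deformation $A'$ missing $p$ and $\tau(p)$ are smooth real curves of positive genus disjoint from $\widetilde L$, so $[\widetilde A]$ and $[A']=\pi^*[A]$ belong to $\widetilde{\mathcal G}$, and the relation $[\mathcal E]+[\tau\mathcal E]=[A']+[\widetilde A]$ in $\widetilde V$ (valid because the correcting homology is supported near $p,\tau(p)$, away from $\widetilde L$) shows that the new invariant class also lies in $\widetilde{\mathcal G}$. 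In both situations $\widetilde V^{\widetilde\tau}$ is thus spanned by $\widetilde{\mathcal G}$ together with $\pi^*(V^\tau)$, which is exactly surjectivity of $\phi$.

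The remaining situation, a blow-up at a point $q\in L$, is the one I expect to be the real obstacle, because now the real exceptional divisor meets $\widetilde L$ (its real part is the core $\R P^1$ of the M\"obius band in $\widetilde L=L\#\R P^2$), so $[\mathcal E]$ is no longer a class of $\widetilde V$. Here I would instead track ranks through Lemma \ref{lem:MV}: the passage from $L$ to $\widetilde L$ raises $b_1$ by one and $b_2(\widetilde X)$ by one, and, distinguishing according to whether $[L]$ and $[\widetilde L]$ vanish in $H_2(\cdot;\Z/2\Z)$, I would show $\dim\widetilde V^{\widetilde\tau}=\dim V^\tau+2$. The two new invariant classes are then identified as the extra generator of $\mathrm{Ker}\,\widetilde\iota\cong H_1(\widetilde L;\Z/2\Z)$ coming from the new loop $\R\mathcal E$, which lies in $\widetilde{\mathcal G}$ by definition, and one further class mapping onto the extra generator of $\mathrm{Im}\,\widetilde\iota$, which I would realize by a smooth real symplectic curve of positive genus whose real part avoids $\widetilde L$ (a general real member of a sufficiently positive, $[\widetilde L]$-orthogonal linear system on the rational surface $\widetilde X$), again landing in $\widetilde{\mathcal G}$.

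The technical heart throughout is the existence of the auxiliary smooth real symplectic curves of positive genus, with prescribed mod-$2$ homology class and prescribed incidence, whose real parts avoid the relevant real component; this is precisely where the rationality of $X$ enters, via a positivity and general-position argument, and it is the step I expect to demand the most care, together with the rank bookkeeping of Lemma \ref{lem:MV} in the delicate case $q\in L$.
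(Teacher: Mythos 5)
Your two ``easy'' cases (conjugated pair, real point off $L$) are correct, and in fact more detailed than the paper, which dismisses them as clearly true: identifying the new invariant class as $[\mathcal E]$ (a real $(-1)$-sphere, hence in $\widetilde{\mathcal G}$ by the self-intersection clause) or as $[\mathcal E]+[\tau\mathcal E]$, killed via $[A']+[\widetilde A]$, is a legitimate expansion, and any ambiguity in your relation in $H_2(\widetilde X\setminus\widetilde L;\Z/2\Z)$ lies in $\mathrm{Ker}\,\widetilde\iota\subset\widetilde{\mathcal G}$ anyway, so that part is sound.

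The genuine gap is in the case $p\in L$, and it is fatal to your plan, not just to its execution. Test it on $X_\R=(\C P^2,conj)$, $L=\R P^2$, blown up at a real point $p$: then $\R\widetilde X=\widetilde L$ is a connected Klein bottle, so a real symplectic curve whose real part avoids $\widetilde L$ has \emph{empty} real part. For such a $\tau$-invariant curve $A$, all intersection points with any real curve having nonempty real part come in conjugated pairs (none can lie on the fixed locus, since $A$ misses it), so $[A]\cdot h$ and $[A]\cdot e$ are both even, where $h$ and $e$ are the classes of a real line and of the exceptional curve; hence $[A]=0$ in $H_2(\widetilde X;\Z/2\Z)$. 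But the extra generator of $\mathrm{Im}\,\widetilde\iota$ is $h+e$ (the image is the orthogonal complement of $[\widetilde L]=h+e$), so \emph{no} smooth real symplectic curve disjoint from $\widetilde L$ can realize a class mapping onto it -- your ``sufficiently positive, $[\widetilde L]$-orthogonal'' real member will always carry real points on $\widetilde L$. For the same reason your rank claim $\dim\widetilde V^{\widetilde\tau}=\dim V^\tau+2$ is unsubstantiated (and it is in any case false for $(\C P^1\times\C P^1,\tau_{hy})$, where $[L]=0$ before the blow-up and $b_2(\widetilde X\setminus\widetilde L;\Z/2\Z)=b_2(X\setminus L;\Z/2\Z)+1$ only). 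The paper's proof takes the opposite tack, which is the idea you are missing: one should \emph{not} try to realize the second extra class invariantly. It exhibits the second extra generator of $H_2(\widetilde X\setminus\widetilde L;\Z/2\Z)$ as the strict transform of a possibly \emph{non-real} algebraic curve $C$ with $C\cap L=\{p\}$ transverse (in the example, a non-real line through $p$), observes that this generator is either not $\tau$-invariant or already maps to $0$ in $\mathcal H(\widetilde X_\R,\widetilde L)$, and combines this with the bound $\dim\mathcal H(\widetilde X_\R,\widetilde L)\le\dim\mathcal H(X_\R,L)+1$ coming from Lemma \ref{lem:MV} (the $\tau_{hy}$ case being settled at that earlier stage, since there the single new generator is the $H_1(\widetilde L;\Z/2\Z)$-class, which lies in $\widetilde{\mathcal G}$ by definition). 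The existence of such a $C$, not your auxiliary real curves, is where the classification of minimal real rational surfaces enters.
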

\begin{proof}
This is clearly true when  $(\widetilde X,\widetilde \tau)$ is a blow
up of $(X,\tau)$ at a real
point in $\R X\setminus L$ or at a pair of $\tau$-conjugated points.
Hence let us now assume that $(\widetilde X,\widetilde \tau)$ is a blow
up of $(X,\tau)$ at a point $p\in L$.
Since $[\widetilde L]\ne 0$ in $H_2(X;\Z/2\Z)$, by Lemma \ref{lem:MV} we have
$$b_2(\widetilde X\setminus \widetilde L;\Z/2\Z)=b_2(X\setminus L;\Z/2\Z)+1$$
if $(X,\tau)=(\C P^1\times \C P^1,\tau_{hy})$, and
$$b_2(\widetilde X\setminus \widetilde L;\Z/2\Z)=b_2(X\setminus L;\Z/2\Z)+2$$
otherwise.
In both cases, an extra generator of $H_2(\widetilde X\setminus
\widetilde L;\Z/2\Z)$  is given by the extra generator of
$H_1(\widetilde L;\Z/2\Z)$. In particular this proves the proposition
in the case $(X,\tau)=(\C P^1\times \C P^1,\tau_{hy})$, and implies
$$\dim  \mathcal H(\widetilde X_\R, \widetilde L)\le \dim \mathcal H(X_\R,L)+1$$
otherwise.
In this latter case, from the classification of minimal real algebraic surfaces
up to deformation,
 we may assume that there exists an algebraic curve $C$ in $X$
such that $C\cap L=\{p\}$ and that this intersection is transverse.
Hence the strict transform 
of $C$ in $\widetilde X$ is 
a second extra generator of $H_2(\widetilde X\setminus \widetilde L;\Z/2\Z)$, which is
either not $\tau$-invariant or mapped to $0$ in 
$ \mathcal H(\widetilde X_\R, \widetilde L)$.
\end{proof}

\bibliographystyle {alpha}
\bibliography {Biblio.bib}

\end{document}